\title{l1 regularization}
\author{Philip Miller}
\date{\today\ }
\newtheorem{theorem}{Theorem}[section]
\newtheorem{lemma}[theorem]{Lemma}
\newtheorem{proposition}[theorem]{Proposition}
\newtheorem{corollary}[theorem]{Corollary}
\theoremstyle{remark} 
\newtheorem{remark}[theorem]{Remark}
\theoremstyle{definition}
\newtheorem{assumption}[theorem]{Assumption} 
\newtheorem{example}[theorem]{Example}
\newlist{thmlist}{enumerate}{1}
\setlist[thmlist]{label=\normalfont(\roman{thmlisti}),ref=\thetheorem(\roman{thmlisti})}
\theoremstyle{definition}
\newcommand{\refsub}[1]
{\@ifundefined{r@#1}{??}{\begingroup%
  \edef\temp{\expandafter\detokenize\getrefnumber{#1}}%
  \StrCut{\temp}{(}\temptask\tempsub%
  \edef\templink{\getrefbykeydefault{#1}{anchor}{}}%
  \if\thetask\temptask\hyperlink{\templink}{\tempsub)}%
  \else\hyperlink{\templink}{\temptask\,\tempsub)}\fi
\endgroup}}
\providecommand{\itemreffont}[1]{\textup{#1}}
\newcommand{\itemref}[1]{%
  \begingroup
  \@ifundefined{r@#1}{\refused{#1}}{%
  \edef\tempx{\detokenize\getrefnumber{#1}}
  \StrBetween{\tempx}{(}{)}[\@itemref]
  \edef\restore@@@@link{\getrefbykeydefault{#1}{anchor}{}}%
  \hyperlink{\restore@@@@link}{\itemreffont{(\@itemref)}}
  }%
\endgroup
}
\DeclareMathOperator*{\argmin}{argmin}
\DeclareMathOperator{\spn}{span}
\DeclareMathOperator{\supp}{supp}
\newcommand{\inv}{^{-1}}             
\newcommand{\gobs}{g^\mathrm{obs}}
\newcommand{\linOp}{T}
\newcommand{\nat}{\mathbb{N}_0} 
\newcommand{\pframe}{\phi}
\newcommand{\dframe}{\tilde{\phi}}
\newcommand{\loss}{l}
\newcommand{\err}{\mathbf{err}}
\newcommand{\Xspace}{\mathbb{X}}
\newcommand{\Yspace}{\mathbb{Y}}
\newcommand{\Omegaobs}{\Omega_{\Yspace}}
\newcommand{\norm}[2]{\left\|#1\,|\,#2\right\|}
\newcommand{\snorm}[1]{\left\|#1\right\|}
\begin{document}
\begin{center}
\textbf{\huge Optimal convergence rates for sparsity promoting 
wavelet-regularization in Besov spaces}\\[1ex]
\textbf{Thorsten Hohage and Philip Miller}
\end{center}
\begin{abstract}
This paper deals with Tikhonov regularization for linear and nonlinear ill-posed 
operator equations with wavelet Besov norm penalties. We focus on $B^0_{p,1}$ 
penalty terms which yield estimators that are sparse with respect to a wavelet frame. 
Our framework includes among others, the Radon transform and some nonlinear 
inverse problems in differential equations with distributed measurements.
Using variational source conditions it is shown that such estimators achieve 
minimax-optimal rates of convergence for finitely smoothing operators 
in certain Besov balls both for deterministic and for statistical noise models. 
\end{abstract}

\textbf{keywords:} {sparsity, wavelets, regularization, convergence rates, 
converse results, Besov spaces}

\section{Introduction}
We study the numerical solution of inverse problems formulated as (possibly nonlinear)  
ill-posed operator equations 
\[ F(f^\dagger)= g^\dagger 
\] 
in Banach spaces $\Xspace$ and $\Yspace$. 
Here $F\colon D\subset \Xspace\rightarrow \Yspace$ denotes the forward operator
mapping the unknown exact solution $f^{\dagger}$ to the data $g^{\dagger}$, which 
is defined on a non-empty subset $D$. 
A regularization method which has turned out to be particularly successful in 
numerical experiments is $1$-homogeneous penalization by wavelet coefficients 
which promotes sparsity (\cite{lorenz:08} and \Cref{theo:sparsity}). The purpose of 
this paper is to contribute to the mathematical analysis of this method. 

Let us formulate the class of penalty terms analyzed in this paper. 
We assume that $\Xspace$ is a space of functions on a bounded Lipschitz domain 
$\Omega\subset\mathbb{R}^d$ or the $d$-dimensional torus and consider 
two wavelet frames $\{\pframe_{j,k}\}_{(j,k)\in\Lambda}$ and 
$\{\dframe_{j,k}\}_{(j,k)\in\Lambda}$ in $L^2(\Omega)$, which are dual 
to each other, i.e.\ $\langle \pframe_{j',k'},\dframe_{j,k}\rangle_{L^2}=\delta_{jj'}\delta_{kk'}$. The index set is assumed to be of the form 
$\Lambda= \{(j,k) \colon j\in\mathbb{N}_0, k\in\Lambda_j\}$ where $\Lambda_j$ are 
finite sets such that 
$2^{jd}\leq |\Lambda_j|\leq C_\Lambda 2^{jd}$ for some constant $C_\Lambda\geq 1$.
For parameters $s\in \mathbb{R}$, and $p,q\in [1,\infty]$ we define the norms 
\begin{align}\label{eq:w_besov_norm}
\|f\|_{s,p,q} := \left(\sum_{j=0}^\infty
2^{jqs}2^{jqd(\frac{1}{2}-\frac{1}{p})} 
\left(\sum_{k\in\Lambda_j} |\langle f, \dframe_{j,k}\rangle|^p\right)^{q/p}\right)^{1/q}
\end{align}
which are equivalent to the norm of the Besov space $B^s_{p,q}(\Omega)$ 
under certain conditions on the wavelet frame. 
In particular, the norm $\|\cdot\|_{0,1,1}$ is a weighted $l^1$-norm 
of the wavelet-coefficients.

In order to stably recover 
$f^{\dagger}$ from  noisy data $\gobs$ we consider Tikhonov regularization of the form 
\begin{align}\label{Tyk}
\hat{f}_\alpha \in \argmin_{f\in D} \left[ \frac{1}{2} \|F(f)-\gobs\|_{\Yspace}^2 + \alpha \|f\|_{0,p,1} \right] 
\end{align}
with a regularization parameter $\alpha>0$ and $p\in [1,2]$. 
We consider two noise models: In the standard deterministic noise model the observed data 
$\gobs$ satisfy
\begin{align}\label{deterministic_noise}
\gobs= g^\dagger+\xi \quad\text{with } \|\xi\|_{\Yspace} \leq \delta, 
\end{align} 
with the deterministic noise level $\delta\geq 0$. In our statistical noise model 
we assume that $\Yspace = L^2(\Omegaobs)$ with a bounded $d$-dimensional Riemannian 
manifold $\Omegaobs$, and data are given by 	
\begin{align}\label{statistical_noise}
 \gobs= g^\dagger+\epsilon Z 
\end{align}  
with statistical noise level $\epsilon\geq 0$ and some noise process $Z$ on $L^2(\Omegaobs)$ with white noise as prominent example. As white noise does not belong to $L^2(\Omegaobs)$ with probably $1$ we expand the square in \eqref{Tyk} and omit the term $\frac{1}{2}\|\gobs\|^2_{L^2}$, which has no influence on the minimizer. This yields 
\begin{align}\hat{f}_\alpha \in \argmin_{f\in D} \left[ \frac{1}{2} \|F(f)\|_{L^2}^2 -\langle \gobs, F(f)\rangle +\alpha  \|f\|_{0,p,1} \right].\label{Tykstat} \end{align} 
A main goal of regularization theory are bounds on the distance of 
regularized estimators $\hat{f}_\alpha$ 
to the true solution $f^{\dagger}$ in terms of the noise level $\delta$ or 
$\epsilon$, respectively. Let us review some relevant results on this topic 
in the literature. 
Tikhonov regularization for linear operators mapping from $l^1(\mathbb{N})$ 
to some Banach space $\Yspace$ has been studied in many papers, see \cite{DDD:04} for 
an influential early paper. 
For an injective operator with a continuous extension to $l^2(\mathbb{N})$ 
it has been shown in \cite{BL:08b,GHS:11} that sparsity of the true solution 
$f^{\dagger}$ is necessary and sufficient for linear convergence rates with respect to 
$\delta$. 
For signals $f^{\dagger}$ that are not necessarily sparse, sufficient conditions 
for convergence rates have been established in 
\cite{burger2013convergence,FHV:15}  in an abstract $l^1$-setting 
based on variational source conditions (see also \Cref{sec:comparison}). 
Convergence rates with respect to the Bregman distance 
rather than the norm as loss function are shown in \cite{BHK:18} 
under Besov smoothness assumptions using approximate source conditions. 
In the special case that $F$ is linear and allows a wavelet-vaguelette 
decomposition involving $\{\pframe_{\lambda}\}$ and the norm in $\Yspace$ 
is chosen as $l^2$-norm of vaguelette coefficients, the estimators 
\eqref{Tyk} and \eqref{Tykstat} for $p=1$ coincide with wavelet shrinkage 
estimators (see \cite{donoho:1995,LL:01}). In this case optimal convergence 
rates under white noise with respect to the $L^2$-loss function have already been 
shown in these references. Optimal rates for finitely smoothing linear operators 
with white noise model are also established in \cite{KMR:06} for 
a two-step procedure consisting of wavelet shrinkage \emph{in the data space} 
$\Yspace$ followed by Tikhonov regularization.

The main results of this paper can be summarized as follows:
\begin{itemize}
	\item We formulate a \emph{general strategy for the 
verification of variational source conditions} for $l^1$-type penalties 
(\cref{vari:theorem}). (Recall that source conditions in regularization theory 
are conditions which allow to control the bias term/approximation error, 
see \cref{sec:vsc} for details.) 
This adapts a strategy based on a sequence of finite-dimensional subspaces 
and associated semi-norms which was proposed in \cite{HW:17,weidling2018optimal} 
for the case of smooth penalties. We show that our strategy is also a generalization 
of that in \cite{burger2013convergence}. In contrast to \cite{burger2013convergence}
we use conditions on the forward operator itself rather than its adjoint, which 
leads to some simplifications and allows the treatment of nonlinear operators. 
For linear operators our conditions are equivalent to those 
in \cite{burger2013convergence} (see \Cref{sec:comparison}). 
\item 
Using this strategy for the case of finitely smoothing operators and sufficiently 
smooth wavelet bases, we derive variational source conditions and by standard 
variational regularization theory also rates of convergence for \eqref{Tyk} in Besov scales with fixed integrability index $p\in[1,2]$ (\cref{rate:deterministic}). 
We also show the optimality of these rates. 
Whereas \emph{wavelet shrinkage} has been studied much more extensively, in particular 
for statistical noise models (see e.g.~\cite{donoho:1995,CHR:04}), 
to the best of our knowledge our results show for the first 
time that general sparsity promoting \emph{wavelet penalization} with smooth wavelets 
is capable of achieving optimal rates in all Besov spaces of a given integrability index. 
In particular, in contrast to quadratic Tikhonov regularization no saturation effect occurs, at least for infinitely smooth wavelets such as Meyer wavelets.  
With the regularization \eqref{Tyk} we can get arbitrarily close to a linear convergence rate without the rather restrictive assumption from \cite{BL:08b,GHS:11} 
that $f^{\dagger}$ be sparse in the wavelet basis. 
Note that the two-step procedure in \cite{KMR:06} also suffers from saturation. 
\item In \cref{theo:converse} we establish a \emph{converse result} for exact data. 
Recall that optimality as in the previous point means that no method can achieve a better 
\emph{uniform} rate over a certain Besov ball. Different Besov balls may have the same optimal rate. 
In contrast, converse results are about a single 
signal $f^{\dagger}$ and a fixed method, here \eqref{Tyk} 
(see \cite{KP:02} for a detailed discussion in a statistical context where 
sets characterized by converse results are called maxisets). 
I turns out that necessary conditions 
for power-type convergence rates in terms of $\alpha$ are described by Besov spaces with 
fine index $q=\infty$, and these conditions are also sufficient. Hence, such Besov spaces 
describe maximal sets (maxisets) on which a given rate is achieved. 
\item Finally, in \cref{theorem_stat_rates} 
we derive convergence rates for the white noise model \eqref{statistical_noise} 
using a method proposed in \cite{weidling2018optimal}. These rates are optimal 
for $p>1$ and almost optimal for $p=1$.
\end{itemize}

\section{Setting and Examples}
\subsection{assumptions}
We assume that $\Omega$ is either a bounded Lipschitz domain in $\mathbb{R}^d$ or 
the $d$-dimensional torus $(\mathbb{R}/\mathbb{Z})^d$. Moreover, suppose that  
each element of the frame $(\dframe_{\lambda})_{\lambda\in\Lambda}$ belongs 
to $C^{\overline{s}}(\overline{\Omega})$ for some $\overline{s}\in \mathbb{N}\cup \{\infty\} $. Let $\mathcal{D}^{\overline{s}}(\Omega)$ 
denote the space of distributions of order $\leq \overline{s}$ on $\mathbb{R}^d$ 
supported in $\overline{\Omega}$, or in case of the torus the space of periodic 
distributions of order $\leq \overline{s}$. Then the operators
$Q\colon \mathcal{D}^{\overline{s}}(\Omega)\rightarrow \mathbb{R}^{\Lambda}$ and 
$Q_j\colon \mathcal{D}^{\overline{s}}(\Omega)\rightarrow \mathbb{R}^{\Lambda_j}$
for $j\in\mathbb{N}_0$ are well-defined by 
\begin{align*}
Qf :=  \left(\langle f,\dframe_{j,k}\rangle\right)_{(j,k)\in\Lambda}
\qquad \text{and}\qquad 
Q_jf :=  \left(\langle f,\dframe_{j,k}\rangle\right)_{k\in\Lambda_j}.
\end{align*}
For $p,q\in [1,\infty]$ and $s\in \mathbb{R}$ we introduce the following spaces 
of sequences $(z_{\lambda})$ indexed by $\lambda\in\Lambda$:
\[
b^s_{p,q}:=\left\{(z_\lambda)\in\mathbb{R}^\Lambda \colon \|(z_\lambda)\|_{b^s_{p,q}}<\infty\right\}, \text{ }
\|(z_{\lambda})\|_{b^s_{p,q}}:= 
\left\|
\left(2^{js}2^{jd(\frac{1}{2}-\frac{1}{p})} 
\|(z_{j,k})_{k\in\Lambda_j}\|_p\right)_{j\in\mathbb{N}_0}\right\|_{l^q}
\]
Here $\|(z_{j,k})_{k\in\Lambda_j}\|_p:=\left(\sum_{k\in\Lambda_j}|z_{j,k}|^p\right)^{\frac{1}{p}}$ if $p\in [1,\infty)$ and 
$\|(z_{j,k})_{k\in\Lambda_j}\|_\infty:=\max_{k\in\Lambda_j} |z_{j,k}|$. 
Then the Besov-type norm in \eqref{eq:w_besov_norm} can be written as
\begin{align}\label{eq:normdef}
\|f\|_{s,p,q} = \|Qf\|_{b^s_{p,q}} = 
\left\| \left(2^{js}2^{jd(\frac{1}{2}-\frac{1}{p})} \|Q_j f\|_p\right)_{j\in \nat}\right\|_{\ell^q}
\end{align}
for all
$f\in \mathcal{D}^{\overline{s}}(\Omega)$, and we define 
\[
B^s_{p,q}:= B^s_{p,q}\left(\left\{\dframe_\lambda\right\}\right)
:= \left\{f\in \mathcal{D}^{\overline{s}}(\Omega)\colon 
\|f\|_{s,p,q}<\infty\right\}
\]
The following assumptions on these spaces will appear in the theorems below: 
\begin{subequations}\label{eq:setting}
\begin{align}\label{eq:completeness}
& B^s_{p,q} \text{ is complete for all } p,q\in [1,\infty] \text{ and }|s|<\overline{s},\\
\label{eq:Q_surjective}
&Q\colon B^s_{p,q} \to b^s_{p,q} \text{ is surjective for all }
p,q\in [1,\infty] \text{ and }|s|<\overline{s},\\
\label{eq:p_in_12}
 & p\in [1,2] \text{ and the domain } D \text{ of } F \text{ is a non-empty convex subset of } B^0_{p,1},\\
\label{eq:domain_closed}
& a\in ( d/p-d/2  , \overline{s})
\text{ and } D \text{ is closed in the topology induced by } \|\cdot \|_{-a,2,1} \text{ on } B^0_{p,1}.
\end{align}
\end{subequations} 
Note that due to the completeness of $b^s_{p,q}$, assumption \eqref{eq:Q_surjective} 
is stronger than \eqref{eq:completeness}. Further note that if 
$\{\dframe_{\lambda}\colon\lambda\in\Lambda\}$ is Riesz basis of $L^2(\Omega)$ 
(which will be assumed for lower bounds in \Cref{sec:lowerbounds}), then 
\eqref{eq:Q_surjective} holds true for $s=0$ and $p=q=2$, and $B^0_{2,2}=L^2(\Omega)$. 
\eqref{eq:Q_surjective} for the non-separable spaces with $\max(p,q)=\infty$ is 
a stronger assumption and will only be needed in \Cref{sec:sparsity}. The lower bound on the number $a$ in \eqref{eq:domain_closed} implies  $B^0_{p,1}\subset B^{-a}_{2,1}$ (see \ref{prop:embeddings}\ref{embed_mixed}). As the embedding is compact 
(see \Cref{compactness_embedding}), we cannot 
hope that $D$ is complete in the $B^{-a}_{2,1}$ topology. If $a\in ( d/p-d/2  , \overline{s})$ then \eqref{eq:domain_closed} is satisfied if and only if $D$ is the intersection of a closed set in $B^{-a}_{2,2}$ with $B^0_{p,1}$. For example 
box-constrained sets $D=\{f\in B^0_{p,1}:\alpha\leq f\leq \beta\;\text{a.e.}\}$ 
with $\alpha,\beta\in\mathbb{R}$, $\alpha<\beta$ satisfy 
\eqref{eq:domain_closed} if $B^0_{p,1}(\{\dframe_\lambda\}) = B^0_{p,1}(\Omega)$ and 
$B^{-a}_{2,1}(\{\dframe_\lambda\}) = B^{-a}_{2,1}(\Omega)$.

Most of our  theorems also need some of the following assumption on the operator 
$F$ which are parameterized by some number $a>0$.
%
\begin{assumption}\label{Finvlipschitz} 
There exists a constant $L_1>0$ such that 
\begin{align}\label{eq:Finvlipschitz}
 \|f_1-f_2\|_{-a,p,\infty} & \leq L_1 \|F(f_1)-F(f_2)\|_{\Yspace} \quad \text{for all } f_1, f_2\in D.  
\end{align} 
\end{assumption} 

\begin{assumption}\label{Flipschitz} 
There exists a constant $L_2>0$ such that 
\begin{align*}
\|F(f_1)-F(f_2)\|_{\Yspace}\leq L_2 \|f_1-f_2\|_{-a,2,1}\quad\text{for all } f_1, f_2\in D.
\end{align*}

\end{assumption}

\begin{assumption}\label{operator_additional}
Assume $\Yspace = L^2(\Omegaobs)$ with a bounded $d$-dimensional Riemannian 
manifold $\Omegaobs$ and that there exists a constant $L_3>0$ such that 
\begin{align*} 
 \left\|F(f_1)-F(f_2)\right\|_{B^a_{p,1}(\Omegaobs)} \leq L_3\|f_1-f_2\|_{0,p,1} \quad\text{for all } f_1,f_2\in D. 
\end{align*} 
\end{assumption}

\begin{assumption}\label{operator_s}
Assume $\Yspace = L^2(\Omegaobs)$ with a bounded $d$-dimensional Riemannian 
manifold $\Omegaobs$ and that for some constants $L_4,s>0$ we have 
\[
\|f_1-f_2\|_{s,p,\infty}\leq L_4\|F(f_1)-F(f_2)\|_{B^{s+a}_{p,\infty}(\Omegaobs)}
\quad \text{for all }f_1,f_2\in D\cap B^s_{p,\infty}. 
\]
\end{assumption}

Usually $\Yspace$ is chosen as an $L^{\tilde{p}}$ space, most often with  
$\tilde{p}=2$, and this case will be considered in all our examples below. 
But we keep our assumptions as general as possible to cover also other interesting 
choices of the data fidelity term,  e.g.\ $\tilde{p}=1$ for impulsive noise. 
If $a\geq d/p-d/2$ then Assumptions \ref{Finvlipschitz} and \ref{Flipschitz} follow from the stronger conditions 
\begin{align}\label{eq:a_times_smoothing}
\frac{1}{L_1'}\|f_1-f_2\|_{-a,2,2}\leq 
\|F(f_1)-F(f_2)\|_{\Yspace}\leq L_2' \|f_1-f_2\|_{-a,2,2}
\qquad \mbox{for all }f_1,f_2\in D
\end{align}
(see \Cref{prop:embeddings}),  
and these inequalities will be verified for all our examples. 
\Cref{Finvlipschitz} or the first inequality in \eqref{eq:a_times_smoothing}
imposes that 
$F$ is \emph{at most $a$ times smoothing}.
This will be required for upper bounds on the reconstruction error. 
On the other hand, Assumptions \ref{Flipschitz}, \ref{operator_additional} 
and the second inequality in \eqref{eq:a_times_smoothing}
state  that $F$ is \emph{at least $a$ times 
smoothing} and will be used mostly to derive lower bounds on the reconstruction error. 
Assumptions \ref{operator_additional} and \ref{operator_s} are only needed in the stochastic case. 

Finally, we point out that an operator $F$ satisfying \eqref{eq:a_times_smoothing} 
and defined initially only on $D\cap L^2(\Omega)$ has a unique continuous extension 
to $D\subset B^0_{p,1}$ if $a\geq d/p-d/2$, \eqref{eq:p_in_12} and $D\cap L^2(\Omega)$ is dense in $D$ with respect to $\|\cdot\|_{0,p,1}$. 

\subsection{examples}
We now sketch the verification of our assumptions in several examples.

\begin{example}[periodic wavelets]
\emph{wavelets on $\mathbb{R}$:}
Let $\pframe^{\mathrm{M}}\in C^{\overline{s}}(\mathbb{R})$ be a (mother) wavelet 
with corresponding father wavelet or $\pframe^{\mathrm{F}}\in C^{\overline{s}}(\mathbb{R})$. 
We assume for simplicity that the system  given by 
$\pframe_{0,k}^0(x):=\pframe^{\mathrm{F}}(x-k)$ and 
$\pframe_{j+1,k}^0(x):= 2^{j/2}\pframe^{\mathrm{M}}(2^jx-k)$, $j\in \mathbb{N}_0$, 
forms an orthonormal basis 
$\{\pframe_{j,k}^0\colon j\in \mathbb{N}_0,k\in\mathbb{Z}\}$ 
of $L^2(\mathbb{R})$. We further assume that this wavelet system is 
$\overline{s}$-regular, $\overline{s}>0$ in the sense of 
\cite[Def.~4.2.14]{GN:15}, which 
entails in particular that $\int_{\mathbb{R}}\pframe^{\mathrm{M}}(x)x^m\,dx=0$ for 
all $m\in \mathbb{N}_0$  with $m\leq\overline{s}-1$. Examples of such wavelets include 
Daubechies wavelets \cite{daubechies:88} of order $N\in\mathbb{N}$, which 
are supported in an interval of length $2N-1$ and are $\overline{s}=0.193(N-1)$-regular, as well as Meyer-wavelets \cite{meyer:92}, 
which are $\overline{s}$-regular for any $\overline{s}>0$. 

\emph{periodization:} We define the periodized functions 
$\pframe^{\mathrm{P},G}_j(x):= \sum_{k\in\mathbb{Z}} 2^{j/2} 
\pframe^{G}(2^j(x-k))$ for $G\in\{\mathrm{F,M}\}$ and $j\in \mathbb{N}_0$ 
and set 
$\pframe^{\mathrm{P}}_{0,0} := \pframe^{\mathrm{F}}_{0}$ and  
$\pframe^{\mathrm{P}}_{j+1,k}(x): = \pframe^{\mathrm{P,M}}_j(x-2^{-j}k)$ 
for $j\in \mathbb{N}_0$. Note that all these functions belong to the space 
$C^{\overline{s}}(\mathbb{R}/\mathbb{Z})$ of $1$-periodic functions in 
$C^{\overline{s}}(\mathbb{R})$. 
It is easy to see that $\{\pframe^\mathrm{P}_{j,k}\colon j,k\in \Lambda^\mathrm{P}\}$ 
with $\Lambda^{\mathrm P}:=\{(j,k)\in \mathbb{N}_0^2\colon 
k<2^{j-1}\}$ defines an orthonormal system of $L^2(\mathbb{R}/\mathbb{Z})$. 
Moreover, setting $\dframe_{\lambda}^{\mathrm P}:=\pframe_{\lambda}^\mathrm{P}$
for $\lambda\in\Lambda^{\mathrm P}$, we have 
\begin{align*}
B^s_{p,q}\left(\left\{\dframe_\lambda^{\mathrm P}\right\}\right) = B^s_{p,q}(\mathbb{R}/\mathbb{Z})\qquad 
\mbox{for }|s|<\overline{s}
\end{align*}
with equivalent norms, and \eqref{eq:completeness}, \eqref{eq:Q_surjective} hold true; 
see \cite[Thm.~4.3.26 and (4.137)]{GN:15}.

\emph{tensor product wavelets on $(\mathbb{R}/\mathbb{Z})^d$:}
Let $\mathcal{G}:= \{\mathrm{F},\mathrm{M}\}^d\setminus\{(\mathrm{F},\dots,\mathrm{F})\}$ and note that $|\mathcal{G}| = 2^d-1$. 
We set $\pframe_{0,\mathbf{0}}(\mathbf{x}):=\prod_{l=1}^d \pframe^{\mathrm{P,F}}_0(x_l)$ and 
$\pframe_{j,\mathbf{k},\mathbf{G}}:= \prod_{l=1}^d \pframe^{\mathrm{P},G_l}_j(x_l-2^{-j}k_l)$ for 
$\mathbf{k} \in \{0,\dots,2^j-1\}^d$, $\mathbf{G}\in \mathcal{G}$, and $j\in\mathbb{N}$. 
Set $\Lambda_0^{\mathrm{P},d}:=\{\mathbf{0}\}$, 
$\Lambda_{j+1}^{\mathrm{P},d}:= \{0,\dots,2^j-1\}^d\times \mathcal{G}$ for $j\in\mathbb{N}_0$ and 
$\Lambda^{\mathrm{P},d}:=\{(j,l)\colon j\in\mathbb{N}_0, l\in\Lambda_j^{\mathrm{P},d}\}$. 
Then $\{\pframe_{\lambda}^{\mathrm{P},d}\colon \lambda\in\Lambda^{\mathrm{P},d}\}$ 
is an orthonormal basis of $L^2((\mathbb{R}/\mathbb{Z})^d)$, and we set 
$\dframe_{\lambda}^{\mathrm{P},d}:=\pframe_{\lambda}^{\mathrm{P},d}$. 
Moreover, \eqref{eq:completeness} and \eqref{eq:Q_surjective} hold true and 
\begin{align*}
B^s_{p,q}\left(\left\{\dframe_\lambda^{\mathrm{P},d}\right\}\right) = B^s_{p,q}\left((\mathbb{R}/\mathbb{Z})^d\right)\qquad 
\mbox{for }|s|<\overline{s}.
\end{align*}

\emph{$a$-times smoothing operators:}
Examples of linear operators which satisfy Assumptions \ref{Finvlipschitz}, 
\ref{Flipschitz}, \ref{operator_additional}, and \ref{operator_s} with 
$\Yspace = L^2((\mathbb{R}/\mathbb{Z})^d)$ are inverses of elliptic 
differential operators of order $a$ with smooth, periodic coefficients 
(see \cite{Triebel1978}). 
Other examples are periodic convolution operators $Ff := k*f$ for which 
the Fourier coefficients of the convolution kernels $k$ have the asymptotic 
behavior $\widehat{k}(\mathbf{n})\sim (1+|\mathbf{n}|^2)^{-a/2}$, 
$\mathbf{n}\in\mathbb{Z}^d$.
\end{example}

Let us recall two possibilities to define Besov and Sobolev spaces on open 
subdomains $\Omega\subset \mathbb{R}^d$: We denote by $B^s_{p,q}(\Omega)$ the 
set of all restrictions $f|_{\Omega}$ (in the sense of the theory of 
distributions) of $f\in B^s_{p,q}(\mathbb{R}^d)$. 
Moreover, let $\mathring{B}^s_{p,q}(\Omega)$ be the closure of all functions 
$f\in C^{\infty}(\mathbb{R}^d)$ with $\supp f\subset \Omega$ in  
$B^s_{p,q}(\mathbb{R}^d)$.

\begin{example}[wavelets on bounded subsets of $\mathbb{R}^d$]
Let $\Omega \subset \mathbb{R}^d$ be a bounded Lipschitz domain. 
Then it is possible under certain conditions to modify the wavelets 
whose support intersects with the boundary $\partial \Omega$ (or is close to 
$\partial\Omega$ relative to its size) such that approximation properties and inverse 
inequalities of the corresponding wavelet spaces are preserved. This was done 
in \cite{DKU:99} for the symmetric, compactly supported biorthogonal wavelets 
from \cite{CDF:92}. There are different constructions of boundary-adapted wavelets 
which satisfy \eqref{eq:completeness} and \eqref{eq:Q_surjective} and yield either 
\begin{align}\label{eq:Besov_domain0}
& B^s_{p,q}\left(\{\dframe_\lambda\}\right)=\mathring{B}^s_{p,q}(\Omega) \qquad \mbox{or}\\
\label{eq:Besov_domain}
& B^s_{p,q}\left(\{\dframe_\lambda\}\right)= B^s_{p,q}(\Omega)
\end{align}
for certain values of $s,p$ and $q$ (see also \cite[\S 3.9--3.10]{cohen:03}, 
\cite[\S 4.3.5]{GN:15}, and \cite{triebel:08}). 
\end{example}

\begin{example}[Radon transform]\label{ex:radon}
We consider the Radon transform 
on a bounded domain $\Omega\subset \mathbb{R}^d$, which appears 
as forward operator $F$ in computed tomography (CT) and  positron emission tomography 
(PET), among others. If $S^{d-1}:=\{x\in\mathbb{R}^d:|x|=1\}$ is the unit sphere, 
$R:B^0_{p,1}(\Omega)\to L^2(S^{d-1}\times \mathbb{R})$ is given by
\[
(Rf)(\theta,t) := \int_{x\cdot \theta = t} f(x)\,dx,\qquad \theta\in S^{d-1},
t\in\mathbb{R}.
\]
As a special case of \cite[Thm.~5.1]{natterer:86} the Radon transform 
satisfies the inequality 
\[
\frac{1}{L} \|f\|_{B^{(1-d)/2}_{2,2}(\Omega)} 
\leq \|Rf\|_{L^2(S^{d-1}\times \mathbb{R})} 
\leq L \|f\|_{B^{(1-d)/2}_{2,2}(\Omega)}
\qquad \mbox{for all }f\in L^2(\Omega).
\]
Together with the density of $L^2(\Omega)$ in $B^{(1-d)/2}_{2,2}(\Omega)$ 
this shows that \eqref{eq:a_times_smoothing} and hence 
Assumptions \ref{Finvlipschitz} and \ref{Flipschitz} are satisfied with 
$a=\frac{d-1}{2}$ and $\Yspace = L^2(S^{d-1}\times \mathbb{R})$ 
if the wavelet system is chosen such that \eqref{eq:Besov_domain} holds true 
for $s=-\frac{d-1}{2}$ and $p=2$.  Moreover, it follows from \cite[Thm.~3.1]{hertle:83} 
that $R$ is an isomorphism from $B^s_{2,2}(\{x\in\mathbb{R}^d:x\leq 1\})$ 
to $B^{s+a}_{2,2}(S^{d-1}\times [-1,1])$ for all $s\in\mathbb{R}$, and hence 
Assumptions \ref{operator_additional} and \ref{operator_s} with $p=2$ are 
satisfied as well by K-interpolation theory (see \cite{Triebel1978}). 
\end{example}

\begin{example}[nonlinear operators]
In the study of nonlinear operator equations in Hilbert scales the following 
condition, which is closely related to Assumptions \ref{Finvlipschitz} and \ref{Flipschitz}, has been investigated (\cite{HP:08,neubauer:92}):
\begin{align}\label{eq:Frechet_cond}
\frac{1}{\tilde{L}}\|h\|_{\Xspace_{-a}}
\leq \|F'[f^{\dagger}]h\|_{\Yspace} \leq \tilde{L}\|h\|_{\Xspace_{-a}}.
\end{align}
Here $\tilde{L}>0$, and $\Xspace_{-a}$ is an element of a Hilbert scale, 
typically of $L^2$-based Sobolev spaces on $\Omega$ with smoothness index $-a$. 
We also need the so-called \emph{range invariance condition}: 
For all $f_1,f_2\in D$ there exists some operator 
$R(f_1,f_2)\in L(\Yspace)$ such that 
\begin{align}\label{eq:range_invariance}
F'[f_1] = R(f_1,f_2)F'[f_2]\qquad \mbox{and}\qquad \|I-R(f_1,f_2)\|\leq \frac{1}{2}.
\end{align}
(Often a bound  $\|I-R(f_1,f_2)\|\leq C\|f_1-f_2\|$ is 
shown, which implies \eqref{eq:range_invariance} in a ball of radius 
$1/(2C)$.) 

The following lemma shows that \eqref{eq:Frechet_cond} and \eqref{eq:range_invariance}
implies \eqref{eq:a_times_smoothing} and hence 
Assumptions \ref{Finvlipschitz} and \ref{Flipschitz} if 
\(\Xspace_{-a} = B^{-a}_{2,2}\) 
with equivalent norms. 
In particular, the conditions \eqref{eq:Frechet_cond} and 
\eqref{eq:range_invariance} have been verified for the following nonlinear 
inverse problems: 
\begin{itemize}
\item \emph{identification of a reaction coefficient $c$.} 
Let $\Omega\subset\mathbb{R}^3$, $d\in\{1,2,3\}$ be a bounded 
Lipschitz domain and $f$ and $g$ smooth right hand sides. 
  For a given $c\in L^{\infty}(\Omega)$ satisfying $c(x)\geq \underline{c}>0$ 
almost everywhere, we define $F(c):=u$ where $u$ solves the elliptic 
boundary value problem  
\begin{align}
\begin{aligned}
&-\Delta u + c u = f&&\mbox{in }\Omega,\\
&u=g&&\mbox{on }\partial \Omega.
\end{aligned}
\end{align}
For this problem eq.~\eqref{eq:Frechet_cond} with $\Yspace = L^2(\Omega)$ 
and $a=2$ has been shown in 
\cite[Thm.~4.5]{HP:08}, and eq.~\eqref{eq:range_invariance}
in \cite[Ex.~4.2]{HNS:95}.
\item \emph{identification of a diffusion coefficient $\rho$.} 
Given $\rho\in L^\infty([0,1])$ with $\rho\geq \underline{\rho}>0$, 
we define $F(\rho):=u$ where $u$ solves the boundary value problem 
\begin{align}
\begin{aligned}
&-(\rho u')' = f&&\mbox{in }(0,1),\\
&u(0)=g_0,\quad u(1)=g_1.
\end{aligned}
\end{align}
Here \eqref{eq:Frechet_cond} with $a=1$ and $\Yspace = L^2([0,1])$ 
has been verified in  \cite[Thm.~5.4]{HP:08} (in a Hilbert scale of 
Sobolev spaces with mean $0$ that was shifted for technical reasons). 
Moreover,  \eqref{eq:range_invariance} was shown in \cite[Ex.~4.3]{HNS:95}.
\item \emph{Hammerstein integral equations.} The forward operator 
is defined by 
\[
(F(f))(t) := \int_0^t\phi(f(s))\,ds
\] 
where $\phi\in C^{2,1}(I)$ on all intervals $I\subset\mathbb{R}$. 
In this case \eqref{eq:Frechet_cond} with $a=1$ and $\Yspace = L^2([0,1])$ 
is shown in  \cite[\S 4]{neubauer:00}, 
and \eqref{eq:range_invariance} in \cite[Ex.~4.1]{HNS:95}.
\end{itemize}
The verification of Assumptions \ref{operator_additional} and \ref{operator_s}
needed for statistical noise models seems to be less straightforward for these 
examples and is left for future research. 
\end{example}

\begin{lemma}
Let $\Xspace_{-a}$ and $\Yspace$ be Banach spaces and  
suppose that $D\subset \Xspace_{-a}$ is convex and $F:D\subset\Xspace_{-a}\to \Yspace$ 
is Fr\'echet differentiable and satisfies \eqref{eq:Frechet_cond} and 
\eqref{eq:range_invariance}. Then 
\[
\frac{1}{2\tilde{L}}\|f_1-f_2\|_{\Xspace_{-a}} 
\leq \|F(f_1)-F(f_2)\|_{\Yspace}
\leq \frac{3}{2}\tilde{L}\|f_1-f_2\|_{\Xspace_{-a}}, \qquad f_1,f_2\in D. 
\]
\end{lemma}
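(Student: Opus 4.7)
The plan is to combine the fundamental theorem of calculus with the range invariance condition to factor out $F'[f^\dagger]$ and then exploit the two-sided bound \eqref{eq:Frechet_cond}.

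First I would write, using convexity of $D$ and Fréchet differentiability,
\begin{equation*}
F(f_1)-F(f_2) = \int_0^1 F'[f_2+t(f_1-f_2)](f_1-f_2)\,dt.
\end{equation*}
By the range invariance condition \eqref{eq:range_invariance} applied with second argument $f^\dagger$, each integrand equals $R(f_2+t(f_1-f_2),f^\dagger)\,F'[f^\dagger](f_1-f_2)$, so pulling $F'[f^\dagger](f_1-f_2)$ out of the integral yields
\begin{equation*}
F(f_1)-F(f_2) = \bar{R}\,F'[f^\dagger](f_1-f_2),
\qquad \bar{R}:=\int_0^1 R(f_2+t(f_1-f_2),f^\dagger)\,dt\in L(\Yspace).
\end{equation*}

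Next I would control $\bar{R}$. Since $\|I-R(\cdot,f^\dagger)\|\le\tfrac12$ pointwise, convexity of the operator norm gives $\|I-\bar{R}\|\le \tfrac12$, so by a Neumann series argument $\bar{R}$ is boundedly invertible with $\|\bar{R}\|\le \tfrac32$ and $\|\bar{R}^{-1}\|\le 2$.

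Finally, I would apply \eqref{eq:Frechet_cond} to $h=f_1-f_2\in\Xspace_{-a}$ and combine the three estimates. The upper bound is immediate:
\begin{equation*}
\|F(f_1)-F(f_2)\|_\Yspace \le \|\bar{R}\|\,\|F'[f^\dagger](f_1-f_2)\|_\Yspace \le \tfrac32\tilde{L}\,\|f_1-f_2\|_{\Xspace_{-a}}.
\end{equation*}
For the lower bound, write $F'[f^\dagger](f_1-f_2)=\bar{R}^{-1}(F(f_1)-F(f_2))$ and apply \eqref{eq:Frechet_cond} from below:
\begin{equation*}
\tfrac{1}{\tilde{L}}\,\|f_1-f_2\|_{\Xspace_{-a}} \le \|F'[f^\dagger](f_1-f_2)\|_\Yspace \le \|\bar{R}^{-1}\|\,\|F(f_1)-F(f_2)\|_\Yspace \le 2\,\|F(f_1)-F(f_2)\|_\Yspace,
\end{equation*}
which rearranges to the claimed lower bound $\tfrac{1}{2\tilde{L}}\|f_1-f_2\|_{\Xspace_{-a}}\le\|F(f_1)-F(f_2)\|_\Yspace$.

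The only subtlety I foresee is justifying that the Bochner-type integral defining $\bar R$ makes sense and that $\|I-\bar R\|\le\tfrac12$; this is harmless provided $t\mapsto R(f_2+t(f_1-f_2),f^\dagger)$ is, say, strongly measurable with uniformly bounded norm, which follows from Fréchet differentiability of $F$ together with the identity $F'[f]=R(f,f^\dagger)F'[f^\dagger]$ (so continuity of $f\mapsto F'[f]$ transfers to continuity of $R(\cdot,f^\dagger)$ on the range of $F'[f^\dagger]$, and the orthogonal complement plays no role in the above application since $\bar R$ is only ever applied to a vector in that range). No other step involves anything beyond elementary Banach space calculus.
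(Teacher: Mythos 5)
Your proof is correct and follows essentially the same route as the paper: the fundamental-theorem-of-calculus representation of $F(f_1)-F(f_2)$, factoring out $F'[f^\dagger]$ via the range invariance condition, the bound $\|I-\bar R\|\le\tfrac12$, and then \eqref{eq:Frechet_cond}. The only cosmetic difference is that you invert $\bar R$ by a Neumann series to get $\|\bar R^{-1}\|\le 2$, whereas the paper obtains the equivalent lower bound $\|\bar R x\|\ge\tfrac12\|x\|$ directly from the reverse triangle inequality.
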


\begin{proof}
By the mean value theorem and eq.~\eqref{eq:range_invariance} we have 
\begin{align*}
F(f_1)-F(f_2) &= \int_0^1 F'[f_2+t(f_1-f_2)](f_1-f_2)\,dt \\
 &= \left(\int_0^1 \left(R(f_2+t(f_1-f_2),f^{\dagger}) -I\right)\,dt+ I\right)
F'[f^{\dagger}](f_1-f_2).
\end{align*}
As $\|\int_0^1 (R(f_2+t(f_1-f_2),f^{\dagger})-I)\,dt\|\leq \frac{1}{2}$, we obtain
\[
\frac{1}{2}\|F'[f^{\dagger}](f_1-f_2)\|_{\Yspace}
\leq \|F(f_1)-F(f_2)\|_{\Yspace} 
\leq \frac{3}{2} \|F'[f^{\dagger}](f_1-f_2)\|_{\Yspace}.
\]
Together with \eqref{eq:Frechet_cond} this yields the assertion. 
\end{proof}

\section{Variational source conditions}\label{sec:vsc}
In subsection \ref{sec:verification} and in the following brief review of 
variational source condition we consider a more general setting than in the 
rest of this paper: Let $\Xspace$, $\Yspace$ be Banach spaces.
Suppose $F \colon D \subset \Xspace \rightarrow \Yspace$ is an injective, continuous map with $D \subset \Xspace$ non-empty and discontinuous inverse. Let $f^\dagger\in D$ be the true solution to an equation $F(f^\dagger)=g^\dagger$ for given exact data $g^\dagger\in \Yspace$. 

Variational source conditions are sufficient and often even necessary conditions 
for rates of convergence of Tikhonov regularization and other regularization methods 
(\cite{Scherzer_etal:09,flemming:12b,HW:17}). 
In general such conditions have the following form: Let
$\loss:\Xspace\times \Xspace\to [0,\infty)$ with $\loss(f,f)=0$ for all $f\in \Xspace$ 
be a loss function  to measure reconstruction errors and let 
$\varphi\colon [0,\infty)\rightarrow [0,\infty)$ be continuous and strictly increasing 
with $\varphi(0)=0$. (Such functions are called index functions.)   
We say that the true solution $f^{\dagger}$ satisfies a variational source 
condition with index function $\varphi$ with respect to the loss function $l$ if  
\begin{align}\label{eq:varicond}
\loss\left(f,f^\dagger\right) \leq \|f\|_\Xspace-\|f^\dagger \|_\Xspace 
+\varphi\left(\|F(f)- F(f^\dagger)\|_\Yspace^2\right) \quad\text{ for all }f\in D. 
\end{align}
It can be shown (see \Cref{prop:standard_conv}) that for the deterministic noise model 
\eqref{deterministic_noise} and a proper choice of the regularization parameter 
$\alpha$ in Tikhonov regularization with penalty term $\alpha\|f\|_{\Xspace}$, 
the source condition \eqref{eq:varicond} with concave $\varphi$ 
implies the convergence rate 
\[
\loss\left(\hat{f}_{\alpha(\delta,\gobs)},f^{\dagger}\right) 
\leq 2\varphi(\delta^2). 
\]
\eqref{eq:varicond} will also lead to optimal statistical convergence rates. 
In the first paper on variational source conditions \cite{HKPS:07} as well as 
in many of the subsequent publications the loss function is chosen as a scalar 
multiple of the Bregman divergence. (For a discussion of the relation of 
\eqref{eq:varicond} to other types of source conditions, e.g.~classical 
spectral source conditions we refer to \cite{flemming:12b}.) 
However, even if the Bregman divergence 
of the $l^1$-norm vanishes, the norm difference of the two elements can be 
arbitrary large. In this sense error bounds with respect to the Bregman divergence 
are not very informative for $l^1$ regularization, and therefore we will use the 
norm in the penalty term to define the loss function instead.  

\subsection{verification of variational source 
condition for  \texorpdfstring{$1$}{}-homogeneous penalties} \label{sec:verification}
In \cite[Thm.~2.1]{HW:17} and \cite[Thm.~3.3]{weidling2018optimal} a general 
strategy for the verification of variational source conditions with Bregman loss 
has been proposed. The following theorem provides an analog for $l^1$-type 
loss functions and $l^1$-type penalties. An essential prerequisite is the 
Pythagoras type equality in \eqref{eq:strategy_additive} with exponent $1$.
\begin{proposition}\label{vari:theorem}
Suppose there exists a family of seminorms $(p_n\colon \Xspace\rightarrow [0,\infty))_{n\in\nat}$ and an increasing sequence $(\nu_n)_{n\in\nat}$ of positive real numbers such that the following conditions are satisfied: 
\begin{subequations} \label{eq:strategy}
\begin{align}
&  p_n^\perp:= \|\cdot\|_\Xspace- p_n  \quad\text{is a seminorm on } \Xspace \text{ for all } n\in\nat,\label{eq:strategy_additive}  \\ 
& \lim_n p_n (f^\dagger)=\|f^\dagger\|_\Xspace, \label{eq:strategy_limit}\\
&  p_n(f_1-f_2)\leq \nu_n \|F(f_1)-F(f_2)\|_\Yspace \quad \text{for all } f_1,f_2 \in D, n\in\nat.\label{eq:strategy_operator}
\end{align}
\end{subequations} 
Then \eqref{eq:varicond} holds true with $\loss(f,f^{\dagger})= \|f-f^{\dagger}\|_\Xspace$ and 
\[ \varphi(t)= 2 \inf_{n\in \mathbb{N}} \left( \nu_n \sqrt{t} + p_n^\perp( f^\dagger) \right).\]
\end{proposition}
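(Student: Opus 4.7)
The plan is to exploit the Pythagoras-type additivity \eqref{eq:strategy_additive} to split $\|f-f^\dagger\|_\Xspace$ into a ``data-fidelity'' component and a ``smallness'' component, and then handle each using the respective hypothesis \eqref{eq:strategy_operator} or \eqref{eq:strategy_limit}. Fix $n\in\nat$ and $f\in D$. The identity $\|\cdot\|_\Xspace = p_n + p_n^\perp$ gives the clean decomposition
\[
\|f-f^\dagger\|_\Xspace = p_n(f-f^\dagger) + p_n^\perp(f-f^\dagger).
\]

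For the first summand I directly apply \eqref{eq:strategy_operator} to obtain $p_n(f-f^\dagger)\leq \nu_n\|F(f)-F(f^\dagger)\|_\Yspace$. For the second summand I use that $p_n^\perp$ is a seminorm (again by \eqref{eq:strategy_additive}), so by the triangle inequality
\[
p_n^\perp(f-f^\dagger) \leq p_n^\perp(f) + p_n^\perp(f^\dagger) = \|f\|_\Xspace - p_n(f) + p_n^\perp(f^\dagger).
\]
The term $-p_n(f)$ still needs to be replaced by something involving $\|f^\dagger\|_\Xspace$. For this I use the reverse triangle inequality for the seminorm $p_n$, namely $p_n(f)\geq p_n(f^\dagger)-p_n(f-f^\dagger)$, together with $p_n(f^\dagger) = \|f^\dagger\|_\Xspace - p_n^\perp(f^\dagger)$, to deduce
\[
p_n^\perp(f-f^\dagger)\leq \|f\|_\Xspace - \|f^\dagger\|_\Xspace + 2\,p_n^\perp(f^\dagger) + p_n(f-f^\dagger).
\]

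Adding the two bounds and invoking \eqref{eq:strategy_operator} once more yields
\[
\|f-f^\dagger\|_\Xspace \leq \|f\|_\Xspace - \|f^\dagger\|_\Xspace + 2\bigl(\nu_n\|F(f)-F(f^\dagger)\|_\Yspace + p_n^\perp(f^\dagger)\bigr).
\]
Taking $t=\|F(f)-F(f^\dagger)\|_\Yspace^2$ and then the infimum over $n\in\nat$ produces exactly the claimed index function $\varphi$. Note that \eqref{eq:strategy_limit} ensures $p_n^\perp(f^\dagger)\to 0$, so $\varphi(t)\to 0$ as $t\to 0$; strict monotonicity and continuity of $\varphi$ are not required for the variational inequality itself but follow (up to a standard concave-hull argument) from the structure of the infimum, and need not be addressed in the inequality proof.

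The whole argument is short and the only mildly delicate step is keeping the signs straight when converting $p_n^\perp(f) = \|f\|_\Xspace - p_n(f)$ back into the canonical form $\|f\|_\Xspace - \|f^\dagger\|_\Xspace + (\text{small})$; this is where both the additivity \eqref{eq:strategy_additive} and the reverse triangle inequality for $p_n$ are simultaneously essential, and it is what forces the factor $2$ in $\varphi$.
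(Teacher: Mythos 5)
Your derivation of the inequality is correct and is essentially the paper's own argument: both proofs rest on the additive splitting $\|\cdot\|_\Xspace = p_n+p_n^\perp$ from \eqref{eq:strategy_additive}, the triangle inequality for $p_n^\perp$, the reverse triangle inequality for $p_n$, and two applications of \eqref{eq:strategy_operator}, which is exactly where the factor $2$ comes from. The only part you defer --- and the paper does not --- is the verification that $\varphi$ is an index function (concavity and upper semicontinuity of the infimum give continuity on $(0,\infty)$, \eqref{eq:strategy_limit} gives $\varphi(0)=0$ and hence continuity at $0$, and $\nu_n\geq\nu_0>0$ yields strict monotonicity); since \eqref{eq:varicond} is formulated for index functions, that check is part of the claim and should be written out rather than asserted.
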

\begin{proof}
First we prove that $\varphi$ is indeed an index function. The conditions \eqref{eq:strategy_additive} and \eqref{eq:strategy_limit} imply  $\varphi(0)=0$. 
As it is an infimum of upper semicontinuous, concave functions $\varphi$ is upper semicontinuous and concave. Concavity yields continuity on $(0,\infty)$. Using upper semicontinuity and $\varphi(0)=0$, we obtain continuity at $0$. Let $t_1<t_2$. The discontinuity of the inverse of $F$ and \eqref{eq:strategy_limit} yield $\nu_n\rightarrow \infty$. Therefore, there exists $n_0\in\mathbb{N}$ such that the infinmum involved in the definition of $\varphi(t_2)$ is attained at $n_0$. We obtain
\[ \varphi(t_1)\leq  2 \left( \nu_{n_0} \sqrt{t_1} + ( f^\dagger)\right)  <  2 \left( \nu_{n_0} \sqrt{t_2} + p^\perp_{n_0})( f^\dagger)\right)  =\varphi(t_2).\]
This proves that $\varphi$ is strictly increasing.\\
Next we turn to the proof of \eqref{eq:varicond} (see also \cite[Lemma 5.1, Theorem 5.2]{burger2013convergence}). Let $f\in \Xspace$ and $n\in\mathbb{N}$. Using  the triangle equality for the $p_n^\perp$ terms and the reverse triangle inequality for the $p_n$ terms, we achieve 
\begin{align*}
\|f-f^\dagger\|_\Xspace-\|f\|_\Xspace+ \|f^\dagger\|_\Xspace  & = p_n(f-f^\dagger)-p_n(f)+p_n(f^\dagger) \\
& +p_n^\perp(f-f^\dagger)-p_n^\perp(f)+p_n^\perp(f^\dagger)\\
& \leq 2 \left(  p_n(f-f^\dagger) +  p_n^\perp(f^\dagger)\right)\\
& \leq 2 \left( \nu_n \|F(f)- F(f^\dagger) \|_\Yspace+ p_n^\perp(f^\dagger)\right)
\end{align*}
for all $n\in\mathbb{N}$. 
Taking the infimum over $n$ on the right hand side proves the assertion. 
\end{proof}
\begin{example}\label{ell1example} Consider the Banach space $\ell^1$ of real absolute summable sequences. For $n\in\mathbb{N}$ we define $p_n\colon \ell^1\rightarrow [0,\infty)$ by $p_n(x)= \sum_{k=1}^n |x_k|$. Then \eqref{eq:strategy_additive} holds true and \eqref{eq:strategy_limit} is satisfied for all $x^\dagger\in\ell^1.$ 
In the \cref{sec:comparison} we prove that for a bounded linear operator $\linOp \colon \ell^1 \rightarrow \Yspace$ to some Banach space $\Yspace$ condition \eqref{eq:strategy_operator} is equivalent to $e_n\in \mathcal{R}(\linOp^\ast)$ for all $n\in\mathbb{N}$. Here $e_n = (\delta_{n,m})_{m\in\mathbb{N}}\in \ell^\infty$ denotes the $n$th standard unit sequence. In \cite{burger2013convergence} convergence rates for $\ell^1$-regularization are proven under this condition on $\mathcal{R}(\linOp^\ast)$.
\end{example}

\subsection{verification for finitely smoothing operators}\label{verification for finitely smoothing operators}

We will apply \Cref{vari:theorem} to an $a$-times smoothing operator in the 
sense of \Cref{Finvlipschitz} to obtain a variational source condition with 
loss function given by $\|\cdot\|_{0,p,1}$. For $n\in\nat$ we define a seminorm 
\[ p_n\colon B^0_{p,1}\rightarrow [0,\infty) \quad\text{by}\quad p_n(f):=\sum_{j=0}^n 2^{jd(\frac{1}{2}-\frac{1}{p})} \|Q_jf \|_p.\] 
\begin{lemma} \label{lemma:equivalentassumptions}
Suppose $F\colon D\subseteq B^0_{p,1}\rightarrow \Yspace$ is a map with a non-empty domain $D$ and $a>0$. Then the following statements are equivalent:
\begin{enumerate}
\item There exists $L_1>0$ such that \eqref{eq:Finvlipschitz} is satisfied. 
\item There exists $L_\nu>0$ such that \eqref{eq:strategy_operator} holds with $\nu_n = L_\nu 2^{na}$.
\end{enumerate}
More precisely: If \eqref{eq:Finvlipschitz} is satisfied with $L_1>0$ then  \eqref{eq:strategy_operator} holds with \[ \nu_n=L_1 (2^a-1)^{-1}  2^{a(n+1)} .\] The second statement implies the first with $L_1:=L_\nu$. 
\end{lemma}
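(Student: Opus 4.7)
The plan is to unwind the definitions of $p_n$ and of $\|\cdot\|_{-a,p,\infty}$ and observe that each level $j$ contribution to $p_n$ equals $2^{ja}$ times a level-$j$ contribution to the $\|\cdot\|_{-a,p,\infty}$ norm. So the two conditions differ only by a geometric factor in $j$, which produces the claimed $2^{na}$-type constant. I would prove the two implications separately.

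For the direction \ref{Finvlipschitz}$\Rightarrow$\eqref{eq:strategy_operator}: write $h := f_1-f_2$ and insert the factor $1 = 2^{ja}\cdot 2^{-ja}$ in every term of the defining sum
\[
p_n(h) = \sum_{j=0}^n 2^{ja}\left(2^{-ja}2^{jd(1/2-1/p)}\|Q_j h\|_p\right).
\]
Each bracketed term is at most $\|h\|_{-a,p,\infty}$, so $p_n(h)\leq \left(\sum_{j=0}^{n}2^{ja}\right)\|h\|_{-a,p,\infty}$. The geometric sum is bounded by $(2^a-1)^{-1}2^{a(n+1)}$, and applying \eqref{eq:Finvlipschitz} then gives exactly $\nu_n = L_1(2^a-1)^{-1}2^{a(n+1)}$.

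For the direction \eqref{eq:strategy_operator}$\Rightarrow$\ref{Finvlipschitz}: for every fixed $j\in\nat$, the single-level term in $\|\cdot\|_{-a,p,\infty}$ is dominated by the full sum defining $p_j$, since $p_j$ is a sum of nonnegative quantities and contains that term (without the $2^{-ja}$-weight). Hence
\[
2^{-ja}2^{jd(1/2-1/p)}\|Q_j h\|_p \leq 2^{-ja} p_j(h) \leq 2^{-ja}\cdot L_\nu 2^{ja}\|F(f_1)-F(f_2)\|_\Yspace = L_\nu\|F(f_1)-F(f_2)\|_\Yspace.
\]
Taking the supremum over $j\in\nat$ yields \eqref{eq:Finvlipschitz} with $L_1 = L_\nu$.

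There is no real obstacle: the argument is purely a rearrangement of sums and a geometric series estimate, and the sharpness of the constants $L_1(2^a-1)^{-1}2^{a(n+1)}$ and $L_\nu$ falls out automatically from the two inequalities above. The only point worth being careful about is whether $p$ is finite or equals $\infty$ (so that $\|\cdot\|_p$ is interpreted as a supremum); the argument above is insensitive to this, since it treats each level $j$ separately and never unpacks the inner $k$-sum.
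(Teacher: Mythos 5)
Your proof is correct and follows essentially the same route as the paper's: the forward direction inserts the factor $2^{ja}2^{-ja}$ and bounds the geometric sum by $(2^a-1)^{-1}2^{a(n+1)}$, and the converse extracts the single level-$j$ term from $p_j$ and takes the supremum, exactly as in the paper. The constants you obtain match those stated in the lemma.
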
 
\begin{proof} 
Let $f_1,f_2\in D$ and $n\in \nat$.
Suppose \eqref{eq:Finvlipschitz} is satisfied with a constant $L_1>0.$ Then
\begin{align*}
p_n(f_1-f_2 ) & =\sum_{j=0}^n 2^{ja}2^{-ja} 2^{jd(\frac{1}{2}-\frac{1}{p})} \|Q_j(f_1-f_2)\|_p \\ 
  & \leq \left(\sum_{j=0}^n 2^{ja} \right) \|f_1-f_2\|_{-a,p,\infty} \\ 
 & \leq L_1 (2^a-1)^{-1}  2^{a(n+1)} \|F(f_1)-F(f_2)\|_{\Yspace}.
\end{align*}
Hence we put $L_\nu:= L_1 2^a (2^a-1)^{-1}$ to obtain the second statement. 
For the converse implication we assume \eqref{eq:strategy_operator} holds with $\nu_n = L_\nu 2^{na}$. Then 
\begin{align*}
2^{-na} 2^{nd(\frac{1}{2}-\frac{1}{p})} \|Q_n(f_1-f_2)\|_p \leq 2^{-na} p_n(f_1-f_2)\leq L_\nu \|F(f_1)-F(f_2)\|_{\Yspace}
\end{align*} 
Taking supremum on the left hand side yields \Cref{Finvlipschitz} with $L_1:=L_\nu$.
\end{proof} 
\begin{lemma}\label{besov:varitheorem}
Suppose \eqref{eq:p_in_12}, $a>0$, \Cref{Finvlipschitz} and 
$f^\dagger\in B_{p,\infty}^s\cap D$ with $\|f^\dagger \|_{s,p,\infty}\leq \varrho$ 
for some $s>0$. Then $f^{\dagger}$ satisfies the variational source condition 
\begin{align}\label{eq:varicondbesov}
 \|f-f^\dagger \|_{0,p,1} \leq \|f\|_{0,p,1}-\|f^\dagger \|_{0,p,1} +\tilde{\varphi} (\|F(f)-F(f^\dagger)\|_{\Yspace}^2)
\quad \text{ for all }  f\in D.
\end{align}
with concave index function $\tilde{\varphi}$ which is bounded by  
\[ \tilde{\varphi}(t)\leq c \varrho^{\frac{a}{s+a}} t^{\frac{s}{2(s+a)}}  \quad\text{for all } 0\leq t\leq \left(\frac{s\varrho}{a}\right)^2.
\]
Here the constant $c>0$ depends only on $a,s$ and $L_1$.
\end{lemma}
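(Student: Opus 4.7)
The plan is to apply \Cref{vari:theorem} with Banach space $\Xspace = B^0_{p,1}$, loss $\loss(f,f^\dagger)=\|f-f^\dagger\|_{0,p,1}$, and the seminorms $p_n$ introduced just above the statement. The three hypotheses of that proposition should all follow quickly: $p_n^\perp(f)=\sum_{j=n+1}^\infty 2^{jd(1/2-1/p)}\|Q_jf\|_p$ is a seminorm as a convergent sum of seminorms; $p_n(f^\dagger)\to\|f^\dagger\|_{0,p,1}$ holds because $f^\dagger\in D\subset B^0_{p,1}$ by \eqref{eq:p_in_12}; and the operator inequality \eqref{eq:strategy_operator} with $\nu_n=L_1(2^a-1)^{-1}\,2^{a(n+1)}$ is exactly the conclusion of \Cref{lemma:equivalentassumptions} applied to \Cref{Finvlipschitz}.

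The substantive step is to quantify the decay of $p_n^\perp(f^\dagger)$ using $\|f^\dagger\|_{s,p,\infty}\leq\varrho$. By definition of that Besov seminorm, $2^{jd(1/2-1/p)}\|Q_jf^\dagger\|_p\leq\varrho\,2^{-js}$ for every $j\in\nat$, so summing the geometric tail yields
\[ p_n^\perp(f^\dagger)\leq \varrho\sum_{j=n+1}^\infty 2^{-js}=\frac{\varrho\,2^{-s(n+1)}}{1-2^{-s}}. \]
Plugging this into \Cref{vari:theorem} produces
\[ \tilde\varphi(t)\leq 2\inf_{n\in\nat}\Bigl[L_1(2^a-1)^{-1}\,2^{a(n+1)}\sqrt t+(1-2^{-s})^{-1}\varrho\,2^{-s(n+1)}\Bigr], \]
which is concave as an infimum of affine functions of $\sqrt t$ and so already an admissible index function.

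Finally I would optimise. If $n+1$ were allowed to range over $[0,\infty)$, setting the derivative of the bracket to zero gives $2^{(a+s)(n+1)}\sim \varrho/\sqrt t$ (with an exact prefactor involving $a,s$ and $L_1$), at which point both summands are of order $\varrho^{a/(a+s)}t^{s/(2(a+s))}$, producing the advertised form. Rounding to a nonnegative integer only shifts the constant. The role of the hypothesis $t\leq(s\varrho/a)^2$ is to force the balance point into the admissible range $n\in\nat$; the ratio $s/a$ enters through the first-order condition $L_1(2^a-1)^{-1}a\sqrt t\cdot 2^{a(n+1)}=\varrho(1-2^{-s})^{-1}s\cdot 2^{-s(n+1)}$, so the constant $c$ should depend on $a,s,L_1$ through the factors $(2^a-1)^{-1}$, $(1-2^{-s})^{-1}$ and the integer rounding. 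The main obstacle I expect is precisely this final bookkeeping, i.e.\ showing cleanly that the stated range $t\leq(s\varrho/a)^2$ suffices; the structural part via \Cref{vari:theorem} is essentially mechanical once $p_n$ has been chosen.
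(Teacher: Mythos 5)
Your proposal is correct and follows essentially the same route as the paper: verify the three hypotheses of \Cref{vari:theorem} with these $p_n$, use the Jackson-type tail estimate $p_n^\perp(f^\dagger)\leq(1-2^{-s})^{-1}2^{-s(n+1)}\varrho$ together with $\nu_n$ from \Cref{lemma:equivalentassumptions}, and then optimise the resulting infimum by balancing the two terms at $2^{(a+s)(n+1)}\sim\varrho/\sqrt t$, with the hypothesis $t\leq(s\varrho/a)^2$ guaranteeing the continuous minimiser lies in $[1,\infty)$ and integer rounding costing only a factor $2^a$. The paper carries out exactly this bookkeeping via the auxiliary function $h(x)=x^a\sqrt t+\varrho x^{-s}$ and its minimiser $x_0=(s\varrho/(a\sqrt t))^{1/(a+s)}$, which is the computation you deferred.
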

\begin{proof} 
Let $f\in B^0_{p,1}$ and $n\in\nat$. As the functionals
$p_n^\perp(f):=\|f\|_{0,p,1}-p_n(f)$ are given by 
 \[p_n^\perp(f)=\sum_{j=n+1}^\infty 2^{jd(\frac{1}{2}-\frac{1}{p})} \|Q_jf \|_p,\] 
they are seminorms, i.e.\ assumption \eqref{eq:strategy_additive} is satisfied.  
We estimate $p_n^\perp(f^\dagger)$ by the Jackson type inequality 
\begin{align*}
 p_n^\perp(f^\dagger)&  = \sum_{j=n+1}^\infty 2^{-js}2^{js} 2^{jd(\frac{1}{2}-\frac{1}{p})} \|Q_jf \|_p \\ 
 & \leq \left( \sum_{j=n+1}^\infty 2^{-js}\right) \|f\|_{s,p,\infty}\leq (1-2^{-s})^{-1} 2^{-(n+1)s} \varrho.
\end{align*}
In particular this verifies \eqref{eq:strategy_limit}.
\Cref{Finvlipschitz} and \Cref{lemma:equivalentassumptions} imply \eqref{eq:strategy_operator} with $\nu_n=L_1 (2^a-1)^{-1}  2^{a(n+1)}$.
We define $\tilde{C}:=2 \max\left\{ (1-2^{-s})^{-1} , L_1(2^a-1)^{-1} \right\}$. \Cref{vari:theorem} yields \eqref{eq:varicondbesov} with
\begin{align}\label{eq:index_inf} \tilde{\varphi}(t)=  \tilde{C} \inf_{n\in \nat} \left( 2^{a(n+1)}  \sqrt{t} + 2^{-s(n+1)} \varrho \right) .
\end{align}
It remains to estimate $\tilde{\varphi}.$
Since the stated inequality is trivially fulfilled if $t=0$ we may assume $0<t\leq \left( \frac{s\varrho}{a}\right)^2.$ Let $h\colon (0,\infty)\rightarrow \mathbb{R}$ given by $h(x):=x^a \sqrt{t}+\varrho x^{-s}.$ Then $x_0:= \left(\frac{s\varrho}{a \sqrt{t}}\right)^{1/(a+s)}$ is the global minimum of $h$, and $h$ is increasing on $[x_0,\infty).$ The assumption on $t$ implies $x_0\geq 1$. Let $N\in\nat$ be minimal with $x_0\leq 2^{N+1}$. Then $2^{N+1}\leq 2x_0$. We estimate 
\begin{align*}
\tilde{\varphi}(t)&\leq \tilde{C} h(2^{N+1})\leq \tilde{C} h(2x_0)\leq 2^{a}\tilde{C} h(x_0) \\ &= 2^{a}\tilde{C}\left(\left(\frac{s}{a}\right)^\frac{a}{a+s}+\left( \frac{s}{a}\right)^{-\frac{s}{a+s}}\right) \varrho^\frac{a}{a+s}t^\frac{s}{2(a+s)}.\qedhere
\end{align*}
\end{proof}
If we use $\loss(f,f^{\dagger}) = \frac{1}{2}\|f-f^\dagger \|_{0,p,1}$ rather 
than $\loss(f,f^{\dagger}) = \|f-f^\dagger \|_{0,p,1}$ as loss function and use \Cref{Flipschitz}
we obtain a global estimate of the index function with the same exponent. 
\begin{theorem} \label{vari_thm_global}
Suppose \eqref{eq:p_in_12}, $a\geq d/p-d/2$ and Assumptions \ref{Finvlipschitz} and \ref{Flipschitz} hold true. Assume $s>0$ and 
$f^\dagger\in B_{p,\infty}^s\cap D$ with $\|f^\dagger \|_{s,p,\infty}\leq \varrho$.
Then the variational source condition 
\begin{align} \frac{1}{2}\|f-f^\dagger \|_{0,p,1} 
\leq \|f\|_{0,p,1}-\|f^\dagger \|_{0,p,1} 
+\varphi\left(\|F(f)-F(f^\dagger)\|_{\Yspace}^2\right)
\quad \text{for all }  f\in D\label{varicond:modi}
\end{align} 
is satisfied with $\varphi(t)=c_\varphi \varrho^{\frac{a}{s+a}} t^{\frac{s}{2(s+a)}}$  
and a constant $c_\varphi>0$ depending only on $a,s,L_1,L_2$. 
\end{theorem}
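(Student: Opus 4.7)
The plan is to globalize the local bound of \Cref{besov:varitheorem} by a case distinction on $t := \|F(f)-F(f^\dagger)\|_{\Yspace}^2$, using the factor $\tfrac12$ on the left-hand side of \eqref{varicond:modi} to supply the slack needed beyond the local regime. Set $t_0 := (s\varrho/a)^2$ and let $c > 0$ (depending only on $a,s,L_1$) be the constant from \Cref{besov:varitheorem}, so that the index function $\tilde{\varphi}$ it constructs obeys $\tilde{\varphi}(t) \leq c\,\varrho^{a/(s+a)}\,t^{s/(2(s+a))}$ on $[0,t_0]$ while the underlying inequality itself holds for every $f \in D$.

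For $t \leq t_0$ the argument is immediate: \Cref{besov:varitheorem} gives
\[
\tfrac12\|f-f^\dagger\|_{0,p,1} \leq \|f-f^\dagger\|_{0,p,1} \leq \|f\|_{0,p,1}-\|f^\dagger\|_{0,p,1} + c\,\varrho^{a/(s+a)}\,t^{s/(2(s+a))},
\]
so any $c_\varphi \geq c$ handles this regime. For $t > t_0$ the local estimate on $\tilde{\varphi}$ no longer has the desired power form, so I would drop the lemma and instead apply the plain triangle inequality $\|f-f^\dagger\|_{0,p,1} \leq \|f\|_{0,p,1}+\|f^\dagger\|_{0,p,1}$. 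Combined with the halving of the left-hand side this yields
\[
\tfrac12\|f-f^\dagger\|_{0,p,1} - \bigl(\|f\|_{0,p,1}-\|f^\dagger\|_{0,p,1}\bigr) \leq \tfrac32\|f^\dagger\|_{0,p,1} - \tfrac12\|f\|_{0,p,1} \leq \tfrac32 \|f^\dagger\|_{0,p,1}.
\]
The geometric-sum (Jackson-type) estimate already used in the proof of \Cref{besov:varitheorem} gives $\|f^\dagger\|_{0,p,1} \leq (1-2^{-s})^{-1}\varrho$, while in this regime $\varphi(t) \geq \varphi(t_0) = c_\varphi (s/a)^{s/(s+a)} \varrho$, so the source condition holds provided $c_\varphi \geq \tfrac32 (1-2^{-s})^{-1}(a/s)^{s/(s+a)}$.

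Taking $c_\varphi := \max\bigl\{c,\,\tfrac32 (1-2^{-s})^{-1}(a/s)^{s/(s+a)}\bigr\}$ completes the argument. The hypothesis $a \geq d/p - d/2$ together with \Cref{Flipschitz} supplies the compatibility bound $\sqrt{t} \leq L_2 C \|f-f^\dagger\|_{0,p,1}$ through the embedding $B^0_{p,1} \hookrightarrow B^{-a}_{2,1}$; using this in the large-$t$ branch (for instance, to sharpen the crude triangle step with a further split based on whether $\|f\|_{0,p,1}$ is comparable to or much larger than $\|f^\dagger\|_{0,p,1}$) is how the advertised $L_2$-dependence in $c_\varphi$ enters. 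The main delicate point is the matching of constants at the threshold $t = t_0$: precisely because $\varphi(t_0)$ is of order $\varrho$, the $O(\varrho)$ slack left over from the triangle inequality in the large-$t$ branch can be absorbed, and this is exactly what the factor $\tfrac12$ on the left-hand side of \eqref{varicond:modi} buys us.
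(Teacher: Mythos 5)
Your argument is correct, but it handles the large-residual regime differently from the paper, and in fact more economically. The paper splits into three regimes: for $t\leq(s\varrho/a)^2$ it uses \Cref{besov:varitheorem} directly; for $(s\varrho/a)^2<t\leq(K\varrho)^2$ with $K=4L_2(1-2^{-s})^{-1}$ it bounds $\tilde\varphi(t)$ by taking $n=0$ in \eqref{eq:index_inf}; and for $t>(K\varrho)^2$ it invokes \Cref{Flipschitz} together with the embeddings of \Cref{prop:embeddings} to show that $\|f\|_{0,p,1}\leq 3\|f^\dagger\|_{0,p,1}$ would force $\sqrt{t}\leq K\varrho$, so that necessarily $\|f\|_{0,p,1}>3\|f^\dagger\|_{0,p,1}$ and then $\frac12\|f-f^\dagger\|_{0,p,1}\leq\|f\|_{0,p,1}-\|f^\dagger\|_{0,p,1}$ without any contribution from $\varphi$. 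You instead merge everything above the threshold $t_0=(s\varrho/a)^2$ into a single branch and absorb the surplus $\frac32\|f^\dagger\|_{0,p,1}\leq\frac32(1-2^{-s})^{-1}\varrho$ into $\varphi(t)\geq\varphi(t_0)=c_\varphi(s/a)^{s/(s+a)}\varrho$ via monotonicity of $\varphi$. This is a clean and complete argument; the constants match at the threshold exactly as you say, and it yields a $c_\varphi$ depending only on $a$, $s$, $L_1$, which is of course consistent with the statement. What your route buys is that \Cref{Flipschitz} and the condition $a\geq d/p-d/2$ are not needed for this theorem at all (they are needed elsewhere, e.g.\ for existence of minimizers and for the lower bounds); what the paper's route buys is the extra information that for very large residuals the source condition holds with no $\varphi$-term. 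Your closing paragraph about how ``the advertised $L_2$-dependence enters'' should be deleted: the further split it gestures at is never carried out, it is not needed to close the proof, and as written it only obscures the fact that your argument is already complete without $L_2$.
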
 
\begin{proof} 
Let $\tilde{\varphi}, \tilde{C}$ and $c$ be as in \Cref{besov:varitheorem} and its proof. We define \[K:=4 L_2(1-2^{-s})^{-1} \quad\text{and } c_\varphi= \max \left\{ c, \tilde{C}(2^a K+2^{-s})\left(\frac{s}{a}\right)^{-\frac{s}{s+a}} \right\}
\]
and make a case distinction for $t:=\|F(f)-F(f^\dagger)\|_{\Yspace}^2$. \\
\emph{Case 1:} $0\leq t\leq (K \varrho)^2$. We will show that 
$\tilde{\varphi}(t)\leq \varphi(t)$ such that 
eq.~\eqref{varicond:modi} 
holds true by \Cref{besov:varitheorem}. \\
\emph{Case 1a:} $0\leq t\leq \left(\frac{s\varrho}{a}\right)^2$. 
Here $\tilde{\varphi}(t)\leq \varphi(t)$ follows immediately from the definitions. \\
\emph{Case 1b:} $\left(\frac{s\varrho}{a}\right)^2<t\leq (K \varrho)^2$.   
By taking $n=0$ in \eqref{eq:index_inf} we obtain 
\[ \tilde{\varphi}(t)\leq \tilde{\varphi}((K \varrho)^2)\leq \tilde{C}(2^aK+2^{-s})\varrho \leq c_\varphi\varrho^{\frac{a}{s+a}} \left(\frac{s\varrho}{a}\right)^{\frac{s}{s+a}} \leq \varphi(t).\]
\medskip
\emph{Case 2:} $t>(K\varrho)^2$. 
If $\|f\|_{0,p,1}\leq 3 \|f^\dagger \|_{0,p,1}$ 
then \Cref{Flipschitz}, \ref{prop:embeddings}.\ref{embed_mixed}. and   \ref{prop:embeddings}.\ref{embed_s}. yields 
\begin{align*}
\|F(f)-F(f^\dagger) \|_{\Yspace} & \leq L_2 \|f-f^\dagger \|_{-a,2,1}\leq L_2 \|f-f^\dagger \|_{0,p,1} \leq 4 L_2 \|f^\dagger \|_{0,p,1}   \leq K \varrho 
\end{align*}
such that we are in Case 1. 
Hence $\|f \|_{0,p,1} > 3 \|f^\dagger \|_{0,p,1}$, and we obtain 
\begin{align*}
 \frac{1}{2}\|f-f^\dagger \|_{0,p,1} & \leq  \frac{1}{2}\|f\|_{0,p,1}+ \frac{1}{2}\|f^\dagger \|_{0,p,1} 
  = \|f\|_{0,p,1} + \frac{1}{2}\left( \|f^\dagger \|_{0,p,1}-\|f \|_{0,p,1}\right) \\
 & \leq \|f\|_{0,p,1} + \frac{1}{2}\left( \|f^\dagger \|_{0,p,1}-3 \|f^\dagger \|_{0,p,1}\right) 
  = \|f\|_{0,p,1}- \|f^\dagger\|_{0,p,1} \\
 & \leq  \|f\|_{0,p,1}-\|f^\dagger \|_{0,p,1} +\varphi (\|F(f)-F(f^\dagger)\|_{L^2}^2).\qedhere
\end{align*}
\end{proof}

\section{Convergence analysis for deterministic errors} \label{convergence}
\subsection{existence of minimizers} \label{existence of minimizers}
\begin{proposition}\label{existence_of_minimizer}
Assume \eqref{eq:completeness}, \eqref{eq:p_in_12}, \eqref{eq:domain_closed} 
and \Cref{Flipschitz}. 
Then for every $\gobs\in \Yspace$ and every $\alpha>0$ there exists a global minimizer of the Tikhonov functional in \eqref{Tyk}. 
\end{proposition}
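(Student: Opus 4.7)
The plan is a standard direct-method argument, carefully set up so as not to require reflexivity of $B^0_{p,1}$ (which fails for $p=1$). Let $\gobs\in\Yspace$, $\alpha>0$ be fixed. Since $D$ is nonempty and $J_\alpha(f):=\frac{1}{2}\|F(f)-\gobs\|_\Yspace^2+\alpha\|f\|_{0,p,1}$ is nonnegative, $m:=\inf_{f\in D}J_\alpha(f)\in[0,\infty)$ is well defined and admits a minimizing sequence $(f_n)\subset D$. From $J_\alpha(f_n)\leq m+1$ eventually and $\alpha>0$ I get $\sup_n\|f_n\|_{0,p,1}<\infty$.

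Next I would invoke the compact embedding $B^0_{p,1}\hookrightarrow B^{-a}_{2,1}$ (which holds because $a>d/p-d/2$, via \ref{prop:embeddings}\ref{embed_mixed} plus the compactness result cited as \Cref{compactness_embedding}) to extract a subsequence, still denoted $(f_n)$, that converges in $B^{-a}_{2,1}$ to some limit $f^\star$. I then want to show $f^\star\in D$ and $\|f^\star\|_{0,p,1}\le\liminf_n\|f_n\|_{0,p,1}$. For the norm bound, $B^{-a}_{2,1}$-convergence forces $\|Q_j(f_n-f^\star)\|_2\to 0$ for every $j\in\nat$ (each term in the series defining $\|\cdot\|_{-a,2,1}$ tends to zero), so the wavelet coefficients converge componentwise; since each $\Lambda_j$ is finite this yields $\|Q_j f^\star\|_p=\lim_n\|Q_j f_n\|_p$. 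Hence for any finite $J$
\begin{equation*}
\sum_{j=0}^{J}2^{jd(\frac{1}{2}-\frac{1}{p})}\|Q_jf^\star\|_p
=\lim_{n}\sum_{j=0}^{J}2^{jd(\frac{1}{2}-\frac{1}{p})}\|Q_jf_n\|_p
\le\liminf_{n}\|f_n\|_{0,p,1},
\end{equation*}
and letting $J\to\infty$ gives $f^\star\in B^0_{p,1}$ with the desired lower semicontinuity inequality. With $f^\star\in B^0_{p,1}$ established, assumption \eqref{eq:domain_closed} (closedness of $D$ in the $\|\cdot\|_{-a,2,1}$-topology on $B^0_{p,1}$) yields $f^\star\in D$.

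Finally I would use \Cref{Flipschitz} to get continuity of the data-fidelity term: $\|F(f_n)-F(f^\star)\|_\Yspace\le L_2\|f_n-f^\star\|_{-a,2,1}\to 0$, so $\|F(f_n)-\gobs\|_\Yspace\to\|F(f^\star)-\gobs\|_\Yspace$. Combining with the lower semicontinuity of the penalty,
\begin{equation*}
J_\alpha(f^\star)\le \liminf_n J_\alpha(f_n)=m,
\end{equation*}
so $f^\star$ is a minimizer.

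The main obstacle is exactly the compactness/lower-semicontinuity bookkeeping in the non-reflexive space $B^0_{p,1}$: one cannot apply a Banach–Alaoglu argument directly, and the whole setup in \eqref{eq:setting} together with Assumption \ref{Flipschitz} is tailored precisely so that one weaker norm ($\|\cdot\|_{-a,2,1}$) simultaneously carries a compact embedding from $B^0_{p,1}$, keeps $D$ closed, and makes $F$ Lipschitz continuous into $\Yspace$. Everything else (extracting subsequences, Fatou-type lower semicontinuity, assembling the final inequality) is routine once those three properties are in place.
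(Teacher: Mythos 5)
Your argument is correct and follows essentially the same route as the paper's proof: boundedness of a minimizing sequence in $B^0_{p,1}$, extraction of a subsequence converging in $B^{-a}_{2,1}$ via the compact embedding, lower semicontinuity of the penalty through convergence of the blockwise partial sums (your finite-$J$ sums are exactly the paper's $p_n$), membership in $D$ via \eqref{eq:domain_closed}, and continuity of the data term via \Cref{Flipschitz}.
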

\begin{proof}
Let $(f_m)_{m\in \mathbb{N}} \subset D$ be a minimizing sequence of the Tikhonov functional. Then $(f_m)$ is bounded in $B_{p,1}^0$. Due to $a> d/p-d/2$ the embedding $B_{p,1}^0 \subset B^{-a}_{2,1}$ is compact (see \Cref{compactness_embedding}). Hence there exists a subsequence $(f_{m_k})$ such that $f=\lim_k f_{m_k}$ exists in $B^{-a}_{2,1}$. This implies $\lim_k Q_j(f_{m_k})= Q_j(f)$ for all $j\in \nat$. We obtain  
\[ p_n(f) = \lim_k p_n(f_{m_k})\leq \liminf_k \|f_{m_k}\|_{0,p,1}\quad \text{for all}\quad  n\in \nat.\] Hence $\|f\|_{0,p,1}\leq \liminf_k \|f_{m_k}\|_{0,p,1} <\infty$. Therefore, $f\in B^0_{p,1}$ and \eqref{eq:domain_closed} yields $f\in D$. \Cref{Flipschitz} implies 
 $\lim_k F(f_{m_k}) =F(f)$ in $\Yspace$. 
 Therefore, $f$ is a global minimum of the Tikhonov functional. 
\end{proof}
\subsection{upper bounds on the worst case error}
To obtain deterministic convergence rates let us first recall the general 
error bound under a variational source condition \eqref{eq:varicond}. 
Let $\mathcal{S}:\Yspace  \to\mathbb{R}$ be the noisy data fidelity functional such that the data term appearing in the Tikhonov functional becomes $\frac{1}{2}\mathcal{S}(F(f))$. In this paper we 
only consider 
\begin{subequations}
\begin{align}
\label{eq:S_det}
&\mathcal{S}(g):=\|g-\gobs\|^2
&&\mbox{for the noise model \eqref{deterministic_noise}}\\ 
\label{eq:S_rand}
&\mathcal{S}(g):=\|g\|^2-  2 \langle g,\gobs\rangle
&&\mbox{for the noise model \eqref{statistical_noise}.}
\end{align} 
\end{subequations}
The effective noise level $\mathbf{err}:\Yspace\to \mathbb{R}$ is defined by 
\begin{align}\label{eq:defi_err}
\err(g):= \mathcal{S}(g^{\dagger})-\mathcal{S}(g) + \frac{1}{C_{\mathbf{err}}}
\|g-g^{\dagger}\|^2.
\end{align}
For non-quadratic data fidelity term, $\|g-g^{\dagger}\|^2$ should be replaced by some non-quadratic noise-free data fidelity terms  
in \eqref{eq:defi_err} and \eqref{eq:varicond} to keep the effective noise level small. 
For the deterministic noise model \eqref{deterministic_noise} the convenient choice 
of the constant is $C_{\err}=2$, which leads to 
\begin{align}\label{eq:uniform_err_bound}
\err(g) \leq \delta^2 \qquad \mbox{for all }g\in \Yspace.
\end{align}
We need the following variant of \cite[Thm.~3.1 and Cor.~3.1]{grasmair:10}:
\begin{proposition}\label{prop:standard_conv}
Let $F:D\subset \Xspace\to \Yspace$ be some forward operator between Banach spaces $\Xspace$ and $\Yspace$, 
let $\mathcal{S}$ be given by either \eqref{eq:S_det} or \eqref{eq:S_rand}, and  
let $\err$ be defined by \eqref{eq:defi_err}. Moreover, 
assume that $f^{\dagger}\in \Xspace$ satisfies the variational source condition 
\eqref{eq:varicond} for a concave index function $\varphi$ and that 
\begin{align}\label{eq:det_err_bound}
\hat{f}_{\alpha}\in\argmin_{f\in D}\left[\frac{1}{2}\mathcal{S}(F(f))+\alpha 
\|f\|_{\Xspace}\right]
\end{align}
for $\alpha>0$. Then:
\begin{enumerate}
\item For all $\alpha>0$ the error bounds 
\begin{subequations}
\begin{align}
\label{eq:error_decomp_X}
& \loss\left(\hat{f}_{\alpha},f^{\dagger}\right) 
\leq \frac{1}{2\alpha} \err\left(F(\hat{f}_{\alpha})\right) + \psi(2\alpha C_{\err}), \\
\label{eq:error_decomp_Y}
& \left\|F\left(\hat{f}_{\alpha}-g^{\dagger}\right)\right\|^2 
\leq 2C_{\err}\err\left(F(\hat{f}_{\alpha})\right) + 4C_{\err}\alpha 
\psi(4 C_{\err}\alpha)
\end{align}
\end{subequations}
hold true with the Fenchel conjugate $\psi(\alpha):=(-\varphi)^*\left(- \frac{1}{\alpha}\right)$. 
\item If  \eqref{eq:uniform_err_bound} holds true  and if we choose 
$-\frac{1}{2C_{\err}\alpha^*}\in \partial(-\varphi)(C_{\err}\delta^2)$, then 
\begin{align}\label{eq:gen_det_rate}
\loss\left(\hat{f}_{\alpha^*},f^{\dagger}\right) 
\leq C_{\err} \varphi\left(\delta^2\right).
\end{align}
\end{enumerate}
\end{proposition}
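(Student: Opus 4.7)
The plan is to derive both bounds in (i) from a single master inequality and then invoke Fenchel--Young duality with two different parameters. First, testing the minimality of $\hat{f}_\alpha$ in \eqref{eq:det_err_bound} against $f = f^{\dagger}$ (for which $F(f^\dagger) = g^\dagger$) yields
\[
\alpha\bigl(\|\hat{f}_\alpha\|_\Xspace - \|f^\dagger\|_\Xspace\bigr) \leq \tfrac{1}{2}\bigl(\mathcal{S}(g^\dagger) - \mathcal{S}(F(\hat{f}_\alpha))\bigr) = \tfrac{1}{2}\err(F(\hat{f}_\alpha)) - \tfrac{1}{2C_{\err}}\|F(\hat{f}_\alpha) - g^\dagger\|^2,
\]
where the equality is just a rearrangement of the definition \eqref{eq:defi_err}. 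Dividing by $\alpha$ and inserting the variational source condition \eqref{eq:varicond} at $f = \hat{f}_\alpha$ produces the master inequality
\[
\loss\bigl(\hat{f}_\alpha, f^\dagger\bigr) + \frac{1}{2\alpha C_{\err}}\|F(\hat{f}_\alpha) - g^\dagger\|^2 \leq \frac{1}{2\alpha}\err(F(\hat{f}_\alpha)) + \varphi\bigl(\|F(\hat{f}_\alpha) - g^\dagger\|^2\bigr).
\]

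The key device for handling the $\varphi$-term is the Fenchel--Young inequality, which in this notation reads $\varphi(t) \leq t/\beta + \psi(\beta)$ for every $\beta > 0$ and $t \geq 0$. For \eqref{eq:error_decomp_X} I would apply it with $\beta = 2\alpha C_{\err}$; the linear contribution exactly cancels the $\frac{1}{2\alpha C_{\err}}\|F(\hat{f}_\alpha) - g^\dagger\|^2$ on the left and \eqref{eq:error_decomp_X} drops out. For \eqref{eq:error_decomp_Y} the argument is a small bootstrap: discard $\loss(\hat{f}_\alpha, f^\dagger) \geq 0$ from the left of the master inequality and apply Fenchel--Young with the sharper parameter $\beta = 4\alpha C_{\err}$. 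This absorbs only half of the available $\frac{1}{2\alpha C_{\err}}\|F(\hat{f}_\alpha) - g^\dagger\|^2$ on the left, so that rearranging keeps a positive multiple of $\|F(\hat{f}_\alpha) - g^\dagger\|^2$ and produces \eqref{eq:error_decomp_Y}.

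For (ii), the hypothesis $-\tfrac{1}{2C_{\err}\alpha^*} \in \partial(-\varphi)(C_{\err}\delta^2)$ is precisely the equality case of Fenchel--Young for the convex pair $\bigl(-\varphi, (-\varphi)^*\bigr)$ at $t = C_{\err}\delta^2$, $\beta = 2C_{\err}\alpha^*$, and therefore
\[
\psi(2C_{\err}\alpha^*) = \varphi(C_{\err}\delta^2) - \frac{\delta^2}{2\alpha^*}.
\]
Combining this identity with the uniform bound $\err(F(\hat{f}_{\alpha^*})) \leq \delta^2$ from \eqref{eq:uniform_err_bound} and substituting into \eqref{eq:error_decomp_X} makes the $\delta^2/(2\alpha^*)$ terms cancel, giving $\loss(\hat{f}_{\alpha^*}, f^\dagger) \leq \varphi(C_{\err}\delta^2)$. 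The final bound $\varphi(C_{\err}\delta^2) \leq C_{\err}\varphi(\delta^2)$ (for $C_{\err} \geq 1$) follows from concavity of $\varphi$ with $\varphi(0) = 0$, equivalently from the fact that $t \mapsto \varphi(t)/t$ is nonincreasing.

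The only genuinely substantive step is the bootstrap used to obtain \eqref{eq:error_decomp_Y}: one must choose the Fenchel--Young parameter slightly larger than the naive value $2\alpha C_{\err}$ so that a fraction of $\|F(\hat{f}_\alpha) - g^\dagger\|^2$ survives on the favorable side of the inequality. Everything else amounts to careful bookkeeping of the three places in which $C_{\err}$ enters—through $\err$, through the argument of $\psi$, and through the Fenchel pairing—and the unifying feature is that the same master inequality drives both noise models \eqref{eq:S_det} and \eqref{eq:S_rand}.
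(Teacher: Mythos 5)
Your proof is correct and is precisely the standard argument (minimality against $f^\dagger$, insertion of the variational source condition, then Fenchel--Young with parameter $2\alpha C_{\err}$ for \eqref{eq:error_decomp_X}, the enlarged parameter $4\alpha C_{\err}$ for \eqref{eq:error_decomp_Y}, and the equality case of Fenchel--Young for the parameter choice in (ii)) that the paper does not spell out but delegates to the cited results of Werner and Hohage. All steps check out, including the master inequality and the concavity estimate $\varphi(C_{\err}\delta^2)\leq C_{\err}\varphi(\delta^2)$, so this is a faithful self-contained reconstruction of the paper's (outsourced) proof rather than a genuinely different route.
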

\begin{proof}
\eqref{eq:error_decomp_X} and \eqref{eq:gen_det_rate} follow from 
\cite[Thm.~3.3]{Werner2012} and its proof if the scaled Bregman divergence is 
replaced by the more general loss function everywhere. Analogously, 
\eqref{eq:error_decomp_Y} follows from \cite[Thm.~2.3]{Hohage2014}.
\end{proof}

With these preparations the error bound for deterministic errors follows easily:
\begin{theorem}\label{rate:deterministic}
Suppose \eqref{eq:p_in_12}, $a\geq d/p-d/2$ and Assumptions  \ref{Finvlipschitz} and \ref{Flipschitz} hold ture. Assume $s>0$ and $f^\dagger\in B_{p,\infty}^s\cap D$ with $\|f^\dagger \|_{s,p,\infty}\leq \varrho$.
\begin{enumerate}
\item For exact data $\gobs= g^\dagger$ every global minimizer $f_\alpha$ of \eqref{Tyk} satisfies 
\begin{align}\label{rate:deterministic:exact}
\| f_\alpha - f^\dagger\|_{0,p,1}\leq C_r \varrho^{\frac{2a}{s+2a}} \alpha^{\frac{s}{s+2a}}.
\end{align}  
\item  Let $\delta>0$ and $\gobs\in \Yspace$ be given by \eqref{deterministic_noise}. If $\alpha$ is chosen by $\alpha(\delta)=\delta^{\frac{s+2a}{s+a}}$, then every global minimizer $\hat{f}_\alpha$ of \eqref{Tyk} satisfies 
\begin{align}\label{rate:deterministic:noisy}
\| \hat{f}_\alpha - f^\dagger\|_{0,p,1}\leq C_r \varrho^{\frac{a}{s+a}} \delta^{\frac{s}{s+a}}.
\end{align}  
\end{enumerate}
Here $C_r>0$ denotes is a constant depending only on $a,s,L_1,L_2$. 
\end{theorem}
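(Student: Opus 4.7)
The plan is to combine the variational source condition from Theorem vari\_thm\_global with the general error bounds of Proposition prop:standard\_conv applied to the deterministic data fidelity $\mathcal S$ given by \eqref{eq:S_det}. Under the hypotheses, Theorem vari\_thm\_global already supplies \eqref{varicond:modi} with loss $\loss(f,f^\dagger)=\tfrac12\|f-f^\dagger\|_{0,p,1}$ and the concave index function
\[
\varphi(t)=c_\varphi\,\varrho^{\frac{a}{s+a}}\,t^{\frac{s}{2(s+a)}},
\]
so after invoking Proposition prop:standard\_conv the result will follow from a routine computation of the Fenchel conjugate $\psi(\alpha)=(-\varphi)^*(-1/\alpha)$ and of the parameter choice $\alpha^*$.

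For part (ii) I would apply the deterministic estimate $\err(g)\leq \delta^2$ from \eqref{eq:uniform_err_bound} (with $C_{\err}=2$) and invoke statement (2) of Proposition prop:standard\_conv, which yields $\tfrac12\|\hat f_{\alpha^*}-f^\dagger\|_{0,p,1}\leq 2\varphi(\delta^2)$. Since $\varphi$ is a power function with exponent $\beta=\tfrac{s}{2(s+a)}$, the defining relation $-\frac1{2C_{\err}\alpha^*}\in\partial(-\varphi)(C_{\err}\delta^2)$ reduces to $\alpha^*=\bigl(4\varphi'(2\delta^2)\bigr)^{-1}$, and since $\varphi'(t)\propto t^{\beta-1}$ with $\beta-1=-\tfrac{s+2a}{2(s+a)}$, this yields $\alpha^*\sim\delta^{(s+2a)/(s+a)}$, matching the stated choice. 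Substituting $\varphi(\delta^2)=c_\varphi\varrho^{a/(s+a)}\delta^{s/(s+a)}$ and multiplying by two gives \eqref{rate:deterministic:noisy}.

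For part (i) I would use statement (1) of Proposition prop:standard\_conv together with the observation that for exact data $\gobs=g^\dagger$ and the quadratic fidelity \eqref{eq:S_det} one has $\mathcal S(g^\dagger)=0$ and $\mathcal S(F(f_\alpha))=\|F(f_\alpha)-g^\dagger\|^2$, so $\err(F(f_\alpha))=-\tfrac12\|F(f_\alpha)-g^\dagger\|^2\leq 0$. Hence \eqref{eq:error_decomp_X} collapses to $\tfrac12\|f_\alpha-f^\dagger\|_{0,p,1}\leq \psi(4\alpha)$. A direct Legendre-type calculation with $\varphi(t)=c\,t^\beta$ gives
\[
\psi(\alpha)=c(1-\beta)(c\beta)^{\beta/(1-\beta)}\alpha^{\beta/(1-\beta)},
\]
and plugging in $\beta=\tfrac{s}{2(s+a)}$ (so $\beta/(1-\beta)=\tfrac{s}{s+2a}$) and $c=c_\varphi\varrho^{a/(s+a)}$ produces the correct power $\varrho^{2a/(s+2a)}\alpha^{s/(s+2a)}$, which, after multiplying by two and absorbing the factor $4^{\beta/(1-\beta)}$ into the constant, yields \eqref{rate:deterministic:exact}.

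There is no real obstacle here; the whole theorem is an assembly result whose heavy lifting sits in Theorem vari\_thm\_global and Proposition prop:standard\_conv. The only mildly delicate step is the bookkeeping of exponents and constants: one must check that the product $\varrho^{a/(s+a)}\cdot\varrho^{(a/(s+a))\cdot\beta/(1-\beta)}$ telescopes to $\varrho^{2a/(s+2a)}$, and that the resulting constant depends only on $a,s,L_1,L_2$, which is transparent because $c_\varphi$ already has this dependence by Theorem vari\_thm\_global.
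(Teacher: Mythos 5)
Your proposal is correct and follows essentially the same route as the paper: both parts are obtained by feeding the variational source condition of \Cref{vari_thm_global} into \Cref{prop:standard_conv} with $C_{\err}=2$, using statement (1) with $\err\le 0$ for exact data and statement (2) with $\err\le\delta^2$ for noisy data, and then evaluating the Fenchel conjugate $\psi$ of the power-type index function. Your explicit computation of $\psi$ and of the exponent bookkeeping (and your observation that $\err(F(f_\alpha))=-\tfrac12\|F(f_\alpha)-g^\dagger\|^2\le 0$ rather than exactly $0$) only makes explicit what the paper's proof leaves implicit.
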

\begin{proof}
\Cref{vari_thm_global} and \Cref{prop:standard_conv} with $C_\err=2$ imply the error bound 
\begin{align}\label{rate:err}
\frac{1}{2}\| \hat{f}_\alpha - f^\dagger\|_{0,p,1}\leq \frac{1}{2\alpha}\mathbf{err}\left(F( \hat{f}_\alpha)\right)+ \psi( 4\alpha) 
\end{align} 
with  $\psi\colon (0,\infty)\rightarrow (0,\infty)$  given by
\begin{align}\label{eq:psi_bound}
\psi(t)=\left(-\varphi\right)^\ast\left(-\frac{1}{t}\right)
=C_\psi \varrho^{\frac{2a}{s+2a}} t^{\frac{s}{s+2a}}
\end{align} 
for some constant $C_\psi>0$ depending on $s$, $a$ and $c_\varphi$ from \Cref{vari_thm_global}. 
In the case of exact data we have $\mathbf{err}(g)=0$. Hence \eqref{rate:err}  implies \eqref{rate:deterministic:noisy}.
The error bound \eqref{rate:deterministic:noisy} follows from the second statement in \Cref{prop:standard_conv}.
\end{proof}

\begin{remark}\label{rem:Lp_bound}
If $B^0_{p,1}\left(\{\dframe_\lambda\}\right)= B^0_{p,1}(\Omega)$  then it follows from  the continuity of the embedding $B^0_{p,1}(\Omega)\hookrightarrow L^p(\Omega)$ (see \cite{Triebel2010}) 
that under the assumptions of \Cref{rate:deterministic} we also have an error bound 
with respect to the $L^p$-norm, 
\begin{align}\label{eq:Lp_bound}
\|\hat{f}_\alpha - f^\dagger\|_{L^p}=  \mathcal{O} 
\left(\varrho^{\frac{a}{s+a}} \delta^{\frac{s}{s+a}}\right).
\end{align}
For $p=2$, this allows in particular to compare the 
sparsity promoting regularization \eqref{Tyk} to standard Tikhonov-regularization 
with the quadratic penalty term $\alpha\|f\|_{L^2}^2$: 
Whereas for quadratic Tikhonov regularization the error bound 
\eqref{eq:Lp_bound} is only valid for $s\leq 2a$ (see \cite{EHN:96}), 
the sparsity promoting regularization \eqref{Tyk} obeys 
this error bound for arbitrarily high smoothness parameters $s>0$ given sufficient 
smoothness of the wavelet systems, i.e.\ $\overline{s}=\infty$. 
\end{remark}

\subsection{lower bounds on the worst case error}\label{sec:lowerbounds}
To prove that the rate \eqref{rate:deterministic:noisy} is order optimal, we need to show lower bounds on the convergence rate. 
Recall that the \emph{modulus of continuity} $\omega(\delta,K)$ of the inverse of $F$
restricted to some set $K\subset B^0_{p,1}$  with respect to a norm 
$\|\cdot\|_{\mathrm{loss}}$ is given by 
\[ \omega\left(\delta,K,\|\cdot\|_{\mathrm{loss}}\right)
=\sup\left\{\|f_1-f_2\|_{\mathrm{loss}} \colon f_1,f_2 \in K\quad\text{with } \|F (f_1)-F(f_2)\|_{\Yspace}\leq \delta \right\}.
\]
The worst case error $\Delta(\delta,K,\|\cdot\|_{\mathrm{loss}})$ of a (linear or nonlinear) reconstruction method \linebreak $R\colon \Yspace\rightarrow B^0_{p,1}$ on $K$ with respect to $\|\cdot\|_{\mathrm{loss}}$ is given by 
\[ \Delta_R(\delta,K,\|\cdot\|_{\mathrm{loss}}):=\sup\left\{ \|f-R(\gobs)\|_{\mathrm{loss}} \colon f\in K,\, \gobs\in \Yspace \text{ with } \|F(f)-\gobs\|_{\Yspace}\leq \delta\right\}.\] 
One can prove that the modulus of continuity serves as a lower bound on the worst case error 
of any reconstruction method (see {\cite[Rem. 3.12]{EHN:96}}, {\cite[Lemma 2.8]{weidling2018optimal}}):
\begin{equation}\label{worstcaseerror} 
\Delta_R\left(\delta,K,\|\cdot\|_{\mathrm{loss}}\right)\geq 
\frac{1}{2} \omega\left(2\delta,K,\|\cdot\|_{\mathrm{loss}}\right).
\end{equation}
We will choose $K$ as a Besov ball 
\begin{align}\label{compact_source}
 K^s_{p,q}\left(\bar{f},\varrho\right):=\left\{ f\in B^s_{p,q} \colon \|f-\bar{f}\|_{s,p,q}\leq \varrho\right\}
\end{align}
for some $s> 0$, $q\in [1,\infty]$, $\bar{f}\in B^s_{p,q}$ and $\varrho>0$. 
\begin{theorem}\label{theo:lower_bound}
Suppose \eqref{eq:p_in_12}, that $(\dframe_\lambda)_{\lambda\in \Lambda}$ is a Riesz basis of $L^2(\Omega)$, $F$ satisfies \Cref{Flipschitz} 
and that the domain $D$ of $F$ contains a Besov ball 
$K^s_{p,q}(\bar{f},\varrho)$ 
for some $q\in [1,\infty]$, $s> 0$, $\bar{f}\in B^s_{p,q}$ and $\varrho>0$.
There cannot exist a reconstruction method $R\colon \Yspace \rightarrow B^0_{p,1}$ for the operator $F$ satisfying the worst case error bound
\begin{align} 
\Delta_R\left(\delta,K^s_{p,q}\left(\bar{f},\varrho\right),\|\cdot\|_{0,p,1}\right)
=o\left(\varrho^{\frac{a}{s+a}} \delta^{\frac{s}{s+a}}\right).
\end{align}
Hence in this case the rate in \Cref{rate:deterministic} is optimal up to the value of the constant. 
\end{theorem}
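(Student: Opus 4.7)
The plan is to apply the modulus-of-continuity lower bound \eqref{worstcaseerror} and construct two elements $f_1, f_2 \in K^s_{p,q}(\bar{f},\varrho)$ whose images under $F$ differ by at most $2\delta$ while $\|f_1-f_2\|_{0,p,1}$ matches the claimed rate. The construction is a single-scale wavelet packet: for a level $j_0 \in \nat$ and an amplitude $c>0$ to be chosen, set
\[
u_{j_0} := c \sum_{k \in \Lambda_{j_0}} \pframe_{j_0,k},\qquad f_1 := \bar{f} + u_{j_0},\qquad f_2 := \bar{f} - u_{j_0}.
\]
By biorthogonality of $\{\pframe_\lambda\}$ and $\{\dframe_\lambda\}$, the sequence $Qu_{j_0}$ is supported only at level $j_0$, with value $c$ on every $k\in \Lambda_{j_0}$. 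In particular $\|u_{j_0}\|_{s,p,q}$ does not depend on $q$, so the index $q\in[1,\infty]$ of the hypothesis is irrelevant. Reading off from \eqref{eq:normdef} and using $|\Lambda_{j_0}|\asymp 2^{j_0 d}$, one obtains the key identity and asymptotic relation
\[
\|u_{j_0}\|_{s,p,q} = 2^{j_0 s}\|u_{j_0}\|_{0,p,1},\qquad  \|u_{j_0}\|_{-a,2,1} \asymp 2^{-j_0 a}\|u_{j_0}\|_{0,p,1}.
\]

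The two relevant constraints are then easily matched. The inclusion $f_1,f_2\in K^s_{p,q}(\bar{f},\varrho)\subset D$ is ensured by $\|u_{j_0}\|_{s,p,q}\leq \varrho$, i.e.\ $\|u_{j_0}\|_{0,p,1}\leq \varrho\, 2^{-j_0 s}$. By \Cref{Flipschitz} we have $\|F(f_1)-F(f_2)\|_{\Yspace}\leq 2 L_2\|u_{j_0}\|_{-a,2,1}$, so the noise-level constraint $\|F(f_1)-F(f_2)\|_\Yspace\leq 2\delta$ is implied by $\|u_{j_0}\|_{0,p,1}\lesssim \delta\, 2^{j_0 a}$. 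Choosing $j_0$ so that the two upper bounds on $\|u_{j_0}\|_{0,p,1}$ balance, i.e.\ $2^{j_0(s+a)}\asymp \varrho/\delta$, and then taking $c$ to be the larger admissible amplitude, yields
\[
\|f_1-f_2\|_{0,p,1}=2\|u_{j_0}\|_{0,p,1}\gtrsim \varrho^{a/(s+a)}\delta^{s/(s+a)}
\]
for all sufficiently small $\delta>0$. Substituting this lower bound into \eqref{worstcaseerror} contradicts any $o\bigl(\varrho^{a/(s+a)}\delta^{s/(s+a)}\bigr)$ worst-case error and proves the theorem.

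I do not expect any serious obstacle: the only subtlety is to realise $j_0$ as a non-negative integer (take $j_0:=\lceil (s+a)^{-1}\log_2(L_2\varrho/\delta)\rceil$ and accept a bounded loss in the prefactor) and to absorb the constants $C_\Lambda$, $L_2$ and the frame-equivalence constants. The construction is asymptotically tight because it exploits the maximal mismatch between the $B^s_{p,q}$, $B^0_{p,1}$ and $B^{-a}_{2,1}$ scaling factors of a packet supported at a single dyadic scale, which is precisely where the interpolation exponent $s/(s+a)$ arises.
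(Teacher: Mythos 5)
Your proposal is correct and follows essentially the same route as the paper: a single-scale wavelet packet whose amplitude is fixed by balancing the $B^s_{p,q}$-ball constraint against the $B^{-a}_{2,1}$/noise constraint via \Cref{Flipschitz}, with $2^{j_0}\asymp(\varrho L_2/\delta)^{1/(s+a)}$, followed by the modulus-of-continuity bound \eqref{worstcaseerror}. The only cosmetic differences are that the paper perturbs one-sidedly ($\bar f$ and $\bar f+f_j$) and obtains the packet from the surjectivity of $Q$ onto $\ell^2$ rather than as an explicit finite sum of primal frame elements.
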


\begin{proof}
Since $(\dframe_{\lambda})_{\lambda\in \Lambda}$ is a Riesz basis, the map 
$L^2(\Omega) \rightarrow \ell^2(\Lambda)$ given by $f\mapsto (\langle f,\dframe_\lambda\rangle)_{\lambda\in \Lambda}$ is surjective.\\
Let $j\in\nat$. We set $\beta:=C_{\Lambda}\inv 2^{-\frac{1}{2}jd}\min\{   2^{-js}\varrho,  L_2^{-1} 2^{ja}\delta \}$. There exists $f_j\in L^2(\Omega)$ such that $(Q_n(f_j))_k= \beta\delta_{n,j}$ for all $k\in \Lambda_n$, $n\in \nat.$ We obtain
\begin{align}\label{eq:optimal_prior_norm}
 &\|f_j\|_{s,p,q}= \beta 2^{js} 2^{jd(\frac{1}{2}-\frac{1}{p})} |\Lambda_j|^{\frac{1}{p}} \leq \beta C_{\Lambda}^{\frac{1}{p}}    2^{js} 2^{\frac{1}{2}jd}\leq \varrho,\\
& \|F(\bar{f}+f_j)-F(\bar{f})\|_{\Yspace} \leq L_2 \|f_j\|_{-a,2,1}\leq L_2 \beta C_{\Lambda}^{\frac{1}{2}} 2^{-ja} 2^{\frac{1}{2}jd}\leq \delta
\nonumber.
\end{align}
We have 
\begin{align}\label{eq:optimal_loss_norm}
 \|f_j\|_{0,p,1}= \beta 2^{jd(\frac{1}{2}-\frac{1}{p})} |\Lambda_j|^{\frac{1}{p}} \geq  C_{\Lambda}\inv \min\{  2^{-js}\varrho, L_2^{-1} 2^{ja} \delta\}.
\end{align}
This shows that 
\[ \omega\left(\delta,K^s_{p,\infty}(\bar{f},\varrho),\|\cdot\|_{0,p,1}\right) 
\geq \sup_{j\in\mathbb{N}_0} \|f_j\|_{0,p,1}
\geq C_{\Lambda}\inv \sup_{j\in\mathbb{N}_0} \left( \min\{ \varrho 2^{-js},\delta L_2^{-1} 2^{ja}\} \right). \] 
Suppose that $\delta<\varrho L_2$ and let $j_0\in\mathbb{N}$ be minimal with $2^{j_0}> \left(\varrho L_2/\delta\right)^{1/(s+a)}$. Then 
$2^{j_0}\leq 2 \left(\varrho L_2/\delta\right)^{1/(s+a)}$ and we obtain 
\[ \omega\left(\delta,K^s_{p,\infty}(\bar{f},\varrho),\|\cdot\|_{0,p,1}\right)
\geq C_{\Lambda}\inv  2^{-j_0 s}\varrho\geq C_{\Lambda}\inv  2^{-s} \varrho \left(\frac{\varrho L_2}{\delta}\right)^\frac{-s}{s+a}= c  \varrho^{\frac{a}{s+a}} \delta^{\frac{s}{s+a}}\]
with $c:= C_{\Lambda}\inv 2^{-s} L_2^{-s/(s+a)}$. Finally eq.~\eqref{worstcaseerror} yields 
\[ \Delta_R\left(\delta,K^s_{p,\infty}(\bar{f},\varrho),\|\cdot\|_{0,p,1}\right)
\geq \frac{1}{2} \omega\left(2\delta,K^s_{p,\infty}(\bar{f},\varrho),\|\cdot\|_{0,p,1}\right) 
\geq  \frac{c}{2} 2^{\frac{s}{s+a}} \varrho^{\frac{a}{s+a}} \delta^{\frac{s}{s+a}}.\qedhere\]
\end{proof}

\begin{remark}\label{rem:lower_Lp_bound}
If $\{\dframe_{\lambda}\}$ is a frame of compactly supported wavelets, then 
there exists constants $C_1,C_2>0$ such that for all $j\in \nat$ 
the inequalities 
\[C_1 \|f_j\|_{L^p}\leq \beta 2^{dj(\frac{1}{2}-\frac{1}{p})}|\Lambda_j|^{1/p}
\leq C_2 \|f_j\|_{L^p}\]
hold true for the functions $f_j$ in the proof of \Cref{theo:lower_bound} (see \cite[Thm.~3.3.1]{cohen:03}). Then one can modify 
eq.~\eqref{eq:optimal_loss_norm} 
in the previous proof  to show that there cannot be a reconstruction method 
satisfying 
\[
\Delta_R\left(\delta,K^s_{p,q}\left(\bar{f},\varrho\right),\|\cdot\|_{L^p}\right)
=o\left(\varrho^{\frac{a}{s+a}} \delta^{\frac{s}{s+a}}\right).
\]
Hence also the upper bound in \Cref{rem:Lp_bound} is of optimal order. 
\end{remark}

To finish this section we study smoothness described by the total variation 
seminorm on the unit interval, 
\[
|f|_{\mathrm{TV}} := \sup\left\{ \sum_{j=1}^k|f(t_{j-1})-f(t_j)| \colon 
0=t_0<t_1<\ldots<t_k=1, k\in\mathbb{N}\right\}.
\]
Recall that $\|f\|_{\mathrm{BV}}:= \|f\|_{L^1}+ |f|_{\mathrm{TV}}$. 
\begin{corollary}\label{coro:tv}
Let $(\dframe_\lambda)_{\lambda\in \Lambda}$ be a wavelet Riesz basis of $L^2([0,1])$ and suppose that $\Omega = [0,1]$, $B^0_{p,1}(\{\dframe_\lambda\})=B^0_{p,1}([0,1])$. Assume that 
\eqref{eq:p_in_12}, $a\geq d/p-d/2$ and Assumptions  
\ref{Finvlipschitz} and \ref{Flipschitz} hold true 
and that the domain $D$ of $F$ contains a ball 
\[K_{\mathrm{BV}}(0,\varrho):=
\{f\in \mathrm{BV}([0,1])\colon\|f\|_\mathrm{BV}<\varrho\},\qquad \varrho>0.
\]
Then there exists a constant $C>0$ independent of $\varrho$ and $\delta$ such 
that the worst case error of the reconstruction method $R$ defined 
by \eqref{Tyk} with $p=1$ satisfies 
\begin{align} 
 C^{-1}\varrho^{\frac{a}{1+a}} \delta^{\frac{1}{1+a}}
\leq \Delta_R\left(\delta,K_{\mathrm{BV}}\left(0,\varrho\right),\|\cdot\|_{L^1}\right)
\leq  C\varrho^{\frac{a}{1+a}} \delta^{\frac{1}{1+a}}
\end{align}
for sufficiently small $\delta>0$. 
\end{corollary}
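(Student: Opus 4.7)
The plan is to reduce the corollary to \Cref{rate:deterministic} and \Cref{theo:lower_bound} by sandwiching the BV-ball between two Besov-balls, using the classical embeddings
\[
B^1_{1,1}([0,1]) \hookrightarrow \mathrm{BV}([0,1]) \hookrightarrow B^1_{1,\infty}([0,1]).
\]
Thanks to the standing hypothesis $B^0_{1,1}(\{\dframe_\lambda\}) = B^0_{1,1}([0,1])$ together with the wavelet Riesz basis property, these embeddings transfer to the corresponding wavelet-defined Besov spaces $B^1_{1,q}(\{\dframe_\lambda\})$ which appear in the earlier theorems, with equivalence constants depending only on the wavelet system.

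For the upper bound I would fix $f^\dagger \in K_{\mathrm{BV}}(0,\varrho)$ and invoke the right-hand embedding to get $\|f^\dagger\|_{1,1,\infty} \leq c_1 \varrho$ for some constant $c_1$ independent of $\varrho$. Then \Cref{rate:deterministic} applied with $p=s=1$ and the stated choice $\alpha = \delta^{(1+2a)/(1+a)}$ provides the rate $\|\hat f_\alpha - f^\dagger\|_{0,1,1} = O\!\left(\varrho^{a/(1+a)}\delta^{1/(1+a)}\right)$, and the embedding $B^0_{1,1}([0,1]) \hookrightarrow L^1([0,1])$ noted in \Cref{rem:Lp_bound} immediately converts this to the claimed $L^1$-bound.

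For the lower bound I would exploit the reverse embedding to deduce the inclusion $K^1_{1,1}(0, \varrho/c_2) \subseteq K_{\mathrm{BV}}(0,\varrho)$ for a constant $c_2$. Since the worst-case error $\Delta_R$ is monotone in the prior set, it suffices to lower-bound $\Delta_R$ on the smaller Besov-ball; applying \Cref{theo:lower_bound} together with the $L^p$-version from \Cref{rem:lower_Lp_bound} with $p=q=s=1$ gives the matching bound $C^{-1}\varrho^{a/(1+a)}\delta^{1/(1+a)}$ for sufficiently small $\delta$. The main subtlety, rather than any hard analysis, is verifying that the compactly supported frame hypothesis of \Cref{rem:lower_Lp_bound} is consistent with the given setup; this is effectively built into the assumption $B^0_{1,1}(\{\dframe_\lambda\}) = B^0_{1,1}([0,1])$ on the interval, which is precisely what the boundary-adapted Daubechies constructions from \cite{DKU:99} provide. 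No new estimates beyond these classical Besov embeddings are needed.
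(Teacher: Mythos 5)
Your proposal is correct and follows essentially the same route as the paper: the paper's proof likewise sandwiches the BV-ball via the embeddings $B^1_{1,1}([0,1])\hookrightarrow \mathrm{BV}([0,1])\hookrightarrow B^1_{1,\infty}([0,1])$, obtaining the upper bound from \Cref{rate:deterministic} together with \Cref{rem:Lp_bound} and the lower bound from \Cref{theo:lower_bound} together with \Cref{rem:lower_Lp_bound}. Your write-up merely spells out the details the paper leaves implicit.
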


\begin{proof}
We have the continuous embeddings $B^1_{1,1}([0,1])\hookrightarrow 
\mathrm{BV}([0,1])\hookrightarrow B^1_{1,\infty}([0,1])$ 
(see e.g.~\cite[Prop.~4.3.21 and 4.3.22]{GN:15}).  
The former embedding and \Cref{theo:lower_bound}/\Cref{rem:lower_Lp_bound} imply the lower bound, 
and the latter embedding and \Cref{rate:deterministic}/\Cref{rem:Lp_bound} 
imply the upper bound. 
\end{proof}

\section{Sparsity bounds and a converse result}\label{sec:sparsity}
In this section we first derive an estimate of the (block-)sparsity of the minimizers 
$\hat{f}_\alpha$ for deterministic noise and use this to prove a converse result 
to the error bound for exact data. 
\begin{theorem}\label{theo:sparsity} 
Suppose that \eqref{eq:Q_surjective} and \eqref{eq:p_in_12} hold true.
Let $\tilde{p}\in [p,2]$, $a\in [d/p-d/ \tilde{p}, \overline{s})$ and   
$D^\circ$ be the interior of $D$ in $B^0_{p,1}.$ Assume that $F\colon D^\circ\rightarrow \Yspace$ 
is Fr\'echet differentiable. 
Moreover, assume that $\Yspace$ is uniformly smooth (e.g.\ a Hilbert space) 
or otherwise $F$ is linear. 
Let $\gobs\in \Yspace$, $\alpha>0$ and $\hat{f}_\alpha\in D^\circ$ be a local minimizer of 
\eqref{Tyk} and suppose that the operator norm
$K:=\norm{F'[\hat{f}_{\alpha}]}{B^{-a}_{\tilde{p},1}\to \Yspace}$ is finite.  
Then for all $j\in\mathbb{N}_0$ we have 
\[ Q_j \hat{f}_\alpha = 0\qquad \text{if }
\qquad \frac{K}{\alpha}   
 \|F(\hat{f}_\alpha) -\gobs\|_\Yspace < 2^{j(a + \frac{d}{\tilde{p}}-\frac{d}{p})}.
\]
In particular, at most a finite number of wavelet coefficients of $\hat{f}_\alpha$ 
do not vanish. 
\end{theorem}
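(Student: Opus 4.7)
The plan is to test local optimality of $\hat{f}_\alpha$ against a scalar rescaling of its $j$-th wavelet block. Assuming $Q_j\hat{f}_\alpha\neq 0$, I set
\[
h_j := -\sum_{k\in\Lambda_j}(Q_j\hat{f}_\alpha)_k\,\pframe_{j,k},
\]
so that by biorthogonality of $\pframe$ and $\dframe$ one has $Q_l h_j = -\delta_{lj}\,Q_j\hat{f}_\alpha$. The perturbed iterate $\hat{f}_\alpha + t h_j$ therefore differs from $\hat{f}_\alpha$ only by a multiplicative factor $(1-t)$ on level $j$, and since $\hat{f}_\alpha\in D^\circ$ while $h_j$ is a finite linear combination of wavelets, $\hat{f}_\alpha + t h_j\in D$ for all sufficiently small $t\in[0,1]$.

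Next I would expand the two pieces of the Tikhonov functional in $t$. The penalty contributes exactly $-t\,2^{jd(\frac{1}{2}-\frac{1}{p})}\|Q_j\hat{f}_\alpha\|_p$ for every $p\in[1,2]$ and every $t\in[0,1]$, because only block $j$ changes and the $\ell^p$-norm scales as $\|(1-t)x\|_p = (1-t)\|x\|_p$. For the data fidelity I would combine Fréchet differentiability of $F$ with the triangle inequality to obtain the one-sided bound
\[
\tfrac{1}{2}\|F(\hat{f}_\alpha + th_j) - \gobs\|_\Yspace^2 - \tfrac{1}{2}\|F(\hat{f}_\alpha) - \gobs\|_\Yspace^2
\le t\,\|F(\hat{f}_\alpha)-\gobs\|_\Yspace\,\|F'[\hat{f}_\alpha]h_j\|_\Yspace + o(t);
\]
the hypothesis that $\Yspace$ is uniformly smooth or $F$ is linear is what legitimizes the $o(t)$ remainder control. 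Combining both expansions with the local-optimality inequality at $\hat{f}_\alpha$, dividing by $t>0$ and letting $t\to 0^+$, the surviving inequality reads
\[
\alpha\,2^{jd(\frac{1}{2}-\frac{1}{p})}\|Q_j\hat{f}_\alpha\|_p
\le \|F(\hat{f}_\alpha)-\gobs\|_\Yspace\,\|F'[\hat{f}_\alpha]h_j\|_\Yspace.
\]

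The final step converts this into the claimed scale condition using the assumed continuity $F'[\hat{f}_\alpha]:B^{-a}_{\tilde p, 1}\to\Yspace$ with norm $K$. Since $Q_l h_j$ is supported at $l=j$,
\[
\|h_j\|_{-a,\tilde p,1} = 2^{-ja}\,2^{jd(\frac{1}{2}-\frac{1}{\tilde p})}\|Q_j\hat{f}_\alpha\|_{\tilde p}
\le 2^{-ja}\,2^{jd(\frac{1}{2}-\frac{1}{\tilde p})}\|Q_j\hat{f}_\alpha\|_p,
\]
where the last inequality uses monotonicity of $\ell^q(\Lambda_j)$-norms together with $\tilde p\ge p$. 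Substituting $\|F'[\hat{f}_\alpha]h_j\|_\Yspace\le K\|h_j\|_{-a,\tilde p,1}$ into the previous display and dividing by $\|Q_j\hat{f}_\alpha\|_p>0$ yields exactly $\frac{K}{\alpha}\|F(\hat{f}_\alpha)-\gobs\|_\Yspace \ge 2^{j(a + d/\tilde p - d/p)}$, the contrapositive of the first assertion. The finiteness statement then follows because when $a + d/\tilde p - d/p>0$ the right-hand side diverges in $j$, forcing $Q_j\hat{f}_\alpha = 0$ for all but finitely many $j$, and the bound $|\Lambda_l|\le C_\Lambda 2^{ld}$ keeps the total number of nonvanishing coefficients finite.

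The main obstacle I anticipate is not computational but in selecting the right test direction: the block-rescaling $h_j$ is what makes the penalty variation genuinely linear in $t$ rather than merely directionally subdifferentiable, thereby sidestepping any delicate subgradient calculus on the full $B^0_{p,1}$-norm and isolating a single block-scale bound directly from local optimality.
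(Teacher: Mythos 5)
Your proof is correct, but it takes a genuinely different route from the paper. The paper works on the dual side: it writes down the first-order optimality condition $0\in F'[\hat{f}_\alpha]^\ast J(F(\hat{f}_\alpha)-\gobs)+\alpha\,\partial\|\cdot\|_{0,p,1}(\hat{f}_\alpha)$ (via a subgradient sum rule, which is where the uniform-smoothness-or-linearity dichotomy is actually used), characterizes $\partial\|\cdot\|_{0,p,1}$ through Asplund's theorem and the duality $(B^0_{p,1})'=B^0_{p',\infty}$ — this is where \eqref{eq:Q_surjective} enters — and then bounds $\|Q_jf_\alpha^\ast\|_{p'}$ using the adjoint mapping $F'[\hat{f}_\alpha]^\ast\colon\Yspace'\to B^a_{\tilde{p}',\infty}$. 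You instead test local optimality directly against the one-parameter family that rescales block $j$ by $(1-t)$, exploit that the penalty decrement is \emph{exactly} linear in $t$ on this ray, bound the data-fidelity increment to first order by the triangle inequality, and close with the primal bound $\|F'[\hat{f}_\alpha]h_j\|_\Yspace\le K\|h_j\|_{-a,\tilde{p},1}$ together with $\|\cdot\|_{\tilde p}\le\|\cdot\|_p$ on $\Lambda_j$; the resulting constants agree with the paper's. Your argument is more elementary and in fact needs \emph{less}: it bypasses the dual-space identification (so \eqref{eq:Q_surjective} is not used), and — contrary to your own attribution — the one-sided $o(t)$ bound on the squared data term follows from Fréchet differentiability of $F$ and the triangle inequality alone, so the uniform smoothness of $\Yspace$ is not really needed either; that hypothesis is what makes the paper's subdifferential calculus (single-valued duality map) work. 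What the paper's approach buys in exchange is the full structural statement \eqref{eq:extremal_simple}, i.e.\ the existence of a subgradient $f_\alpha^\ast$ with $\langle Q_jf_\alpha^\ast,Q_j\hat{f}_\alpha\rangle=2^{jd(\frac12-\frac1p)}\|Q_j\hat{f}_\alpha\|_p$ for every $j$, which is stronger than the single scalar inequality per block that your perturbation extracts. One small caveat you already flag correctly: the ``finitely many nonzero coefficients'' conclusion needs $a+d/\tilde{p}-d/p>0$, a boundary issue the paper's statement shares.
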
 
\begin{proof}
We first study the first order optimality conditions for $\hat{f}_{\alpha}$. 
If $F$ is nonlinear and $\Yspace$ is uniformly smooth, we use \Cref{lem:optimality} 
with $\mathcal{F}(f)= \frac{1}{2}\|F(f)-\gobs\|_{\Yspace}^2$,  
$\mathcal{G}(f)= \alpha\|f\|_{0,p,1}$, and $\Xspace = B^0_{p,1}$. 
Recall that the uniform smoothness of 
$\Yspace$ implies that the mapping $\mathcal{T}(g):=\frac{1}{2}\|g\|_{\Yspace}^2$ 
is Fr\'echet differentiable, and its derivative is the duality 
mapping $j:\Yspace\to \Yspace'$, $j(g):= \mathcal{T}'[g]$ (see \cite[I.\S 3, Thm.~3.12 and I.\S 4. Cor.~4.5]{cioranescu:90}). 
Therefore, $\mathcal{F}$ is Gateaux differentiable with 
\[ \mathcal{F}'[\hat{f}_\alpha]h
=\langle j(F(\hat{f}_\alpha) -\gobs),F'[\hat{f}_\alpha]h\rangle
= \langle F'[\hat{f}_\alpha]^\ast j(F(\hat{f}_\alpha) -\gobs),h\rangle,\] 
and the first order optimality condition becomes 
\[
0\in F'[\hat{f}_{\alpha}]^\ast j\left(F(\hat{f}_\alpha) -\gobs \right)+  \alpha\partial \|\cdot \|_{0,p,1}(\hat{f}_\alpha).
\]
If $\Yspace$ is not smooth, the duality mapping is a set-valued map 
$J:=\partial \mathcal{T}$ with $J(g) = \{j(g)\}$ in the smooth case. 
However, since we assume $F$ to be linear in this case, $\mathcal{F}$ is convex and 
continuous, and by virtue of the sum and the chain rule for subgradients of 
convex functionals (see \cite{ET:76}), there exist in both cases  
\begin{align} \label{eq:extremal_simple}
g_\alpha^\ast\in J\left(F(\hat{f}_\alpha) -\gobs \right) \quad \text{and } 
f_\alpha^\ast\in \partial\|\cdot \|_{0,p,1}(\hat{f}_\alpha)  \quad
\text{ such that }  F'[\hat{f}_\alpha]^\ast g_\alpha^\ast= - \alpha f_\alpha^\ast.
\end{align}
We have $\|g_\alpha^\ast\|_{\Yspace^\prime}
=\|F(\hat{f}_\alpha) -\gobs\|_\Yspace$.
As $(B^0_{p,1})^\prime=B^0_{p^\prime,\infty}$ (see \Cref{dual_spaces_iso}), 
Asplund's theorem (see 
\cite[I.\S 4, Thm.~4.4]{cioranescu:90}) yields
\begin{align*}
\partial\|\cdot \|_{0,p,1}(\hat{f}_\alpha)= \left\{f^*_{\alpha}\in B^0_{p^\prime,\infty} \colon \sum\nolimits_{j=0}^\infty \left\langle Q_j f^*_{\alpha}, Q_j\hat{f}_\alpha\right\rangle
=\|\hat{f}_\alpha\|_{0,p,1}\text{ and } \|f^*_{\alpha}\|_{0,p^\prime,\infty}=1\right\}.
\end{align*}
By H\"older's inequality we have 
\[   
\left\langle Q_jf_\alpha^\ast, Q_j\hat{f}_\alpha\right\rangle 
\leq \|Q_jf_\alpha^\ast\|_{p^\prime} \cdot\|Q_j\hat{f}_\alpha\|_p 
\leq 2^{jd(\frac{1}{2}-\frac{1}{p})}  \|Q_j \hat{f}_\alpha\|_p 
\]
for every $j\in \nat$.  
Hence $\left\langle Q_jf_\alpha^\ast, Q_j\hat{f}_\alpha\right\rangle =2^{jd(\frac{1}{2}-\frac{1}{p})}  \|Q_j \hat{f}_\alpha\|_p$ for all $j\in\nat$ 
and 
\begin{align}\label{eq:implication}
 Q_j \hat{f}_\alpha = 0 \qquad 
\text{if }  \|Q_j f_\alpha^\ast\|_{p^\prime} < 2^{jd(\frac{1}{2}-\frac{1}{p})}.
\end{align}
Since we assume $F'[\hat{f}_\alpha]$ to have a continuous extension to 
an operator from $B^{-a}_{\tilde{p},1}$ to $\Yspace$, its adjoint 
$F'[\hat{f}_{\alpha}]^\ast$ maps $\Yspace^\prime$ boundedly into 
$(B^{-a}_{\tilde{p},1})^\prime=B^a_{\tilde{p}^\prime,\infty}$ with $1/\tilde{p}+1/\tilde{p}^\prime=1$, and 
$\norm{F'[\hat{f}_{\alpha}]^\ast}{\Yspace^\prime\to B^{a}_{\tilde{p}^\prime,\infty}}
=\norm{F'[\hat{f}_{\alpha}]}{B^{-a}_{\tilde{p},1}\to \Yspace} = K$. 
Together with \eqref{eq:extremal_simple} we obtain  
\begin{align*}
\snorm{Q_jf_\alpha^\ast}_{p^\prime} 
& = \frac{1}{\alpha} \snorm{Q_jF'[\hat{f}_{\alpha}]^\ast g^\ast_\alpha}_{p^\prime} 
\leq \frac{1}{\alpha} \snorm{Q_jF'[\hat{f}_{\alpha}]^\ast g^\ast_\alpha}_{\tilde{p}^\prime} \\
&\leq \frac{1}{\alpha} 2^{-ja}2^{jd\left(\frac{1}{2}-\frac{1}{\tilde{p}}\right)} \snorm{F'[\hat{f}_{\alpha}]^\ast g^\ast_\alpha}_{a,\tilde{p}^\prime,\infty} 
\leq  \frac{K}{\alpha} 2^{-ja}2^{jd\left(\frac{1}{2}-\frac{1}{\tilde{p}}\right)} 
\snorm{g^\ast_\alpha}_{\Yspace^\prime} \\ 
&=  \frac{K}{\alpha} 2^{-ja} 2^{jd\left(\frac{1}{2}-\frac{1}{\tilde{p}}\right)}  
\snorm{F(\hat{f}_\alpha) -\gobs}_\Yspace.
\end{align*}
Inserting this into \eqref{eq:implication} yields the result. 
\end{proof}

Based on \Cref{theo:sparsity} we prove a converse result in the case of exact data $\gobs=g^\dagger$.
For $\alpha>0$ let  
\begin{align}\label{Tyk_exactdata}
f_\alpha \in \argmin_{f\in D} \left[ \frac{1}{2} \|F(f) -g^\dagger\|_{\Yspace}^2 
+ \alpha \|f\|_{0,p,1} \right].
\end{align}
\begin{corollary}\label{theo:converse}
Suppose that \eqref{eq:Q_surjective}, \eqref{eq:p_in_12}, \Cref{Finvlipschitz}, $a>0$ and that $D$ is closed in the topology induced by $\|\cdot\|_{-a,p,1}$ on $B^0_{p,1}$.
Assume that $F\colon D^\circ\rightarrow \Yspace$ 
is Fr\'echet differentiable and that $\Yspace$ is uniformly smooth 
or otherwise $F$ is linear. Suppose that 
$K:=\sup_{f\in D^\circ}\norm{F'[f]}{B^{-a}_{p,1}\to \Yspace}$ is finite. Let $f^\dagger\in B^0_{p,1}$, $g^\dagger=F(f^\dagger)$ and assume 
all minimizers $f_\alpha$, $\alpha>0$ in \eqref{Tyk_exactdata}  
belong to $D^\circ$ 
(e.g. $D = B^0_{p,1}$). Then the following statements are equivalent: 
\begin{enumerate}
\item\label{it:reg} $f^\dagger\in B^s_{p,\infty}$
\item\label{it:rate}  $\gamma:=\sup_{\alpha>0} 
 \alpha^{-\frac{s}{s+2a}} \|f_\alpha  - f^\dagger\|_{0,p,1} <\infty$
\end{enumerate} 
More precisely, we can bound $\gamma\leq C_r \|f^{\dagger}\|_{s,p,\infty}^{2a/(s+2a)}$
and $\|f^{\dagger}\|_{s,p,\infty}\leq (\sqrt{2}K)^{s/a}\gamma^{(2a+s)/2a}$ with the constant $C_r$ from \Cref{rate:deterministic}.
\end{corollary}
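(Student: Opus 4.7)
The direction \itemref{it:reg}$\Rightarrow$\itemref{it:rate}, together with the quantitative bound $\gamma\leq C_r\|f^\dagger\|_{s,p,\infty}^{2a/(s+2a)}$, follows immediately from the exact-data case of \Cref{rate:deterministic} applied with $\varrho=\|f^\dagger\|_{s,p,\infty}$: dividing the rate \eqref{rate:deterministic:exact} by $\alpha^{s/(s+2a)}$ and taking the supremum over $\alpha>0$ gives exactly the required bound on $\gamma$.

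For the converse \itemref{it:rate}$\Rightarrow$\itemref{it:reg}, the plan is to combine the assumed rate with the sparsity statement of \Cref{theo:sparsity}, applied with $\tilde p=p$, so that the threshold exponent $a+d/\tilde p-d/p$ reduces to $a$. All hypotheses of \Cref{theo:sparsity} (Fr\'echet differentiability of $F$, the assumption $f_\alpha\in D^\circ$ for all $\alpha>0$, and finiteness of the operator norm $K$) are directly available, so the only missing ingredient is a quantitative upper bound for the residual $\|F(f_\alpha)-g^\dagger\|_\Yspace$ in terms of $\alpha$. The key observation is that the assumed rate \itemref{it:rate} can be bootstrapped back into the minimality inequality: testing the Tikhonov functional at the competitor $f^\dagger$ yields $\tfrac12\|F(f_\alpha)-g^\dagger\|^2+\alpha\|f_\alpha\|_{0,p,1}\leq\alpha\|f^\dagger\|_{0,p,1}$, and combining this with the reverse triangle inequality and \itemref{it:rate} gives
\[
\|F(f_\alpha)-g^\dagger\|_\Yspace^2
\leq 2\alpha\|f_\alpha-f^\dagger\|_{0,p,1}
\leq 2\gamma\,\alpha^{(2s+2a)/(s+2a)},
\]
that is, $\|F(f_\alpha)-g^\dagger\|_\Yspace\leq\sqrt{2\gamma}\,\alpha^{(s+a)/(s+2a)}$.

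Inserting this residual bound into the sparsity criterion of \Cref{theo:sparsity} shows that $Q_j f_\alpha=0$ whenever $\alpha$ is strictly larger than the threshold $\alpha_j^\ast:=(K\sqrt{2\gamma})^{(s+2a)/a}\,2^{-j(s+2a)}$. Choosing $\alpha_j=(1+\epsilon)\alpha_j^\ast$ for arbitrary $\epsilon>0$ we have $Q_j f^\dagger=Q_j(f^\dagger-f_{\alpha_j})$, hence
\[
2^{jd(\frac12-\frac1p)}\|Q_j f^\dagger\|_p
\leq \|f^\dagger-f_{\alpha_j}\|_{0,p,1}
\leq \gamma\,\alpha_j^{s/(s+2a)}.
\]
Multiplying by $2^{js}$ and using the identity $\alpha_j^{s/(s+2a)}=(1+\epsilon)^{s/(s+2a)}(K\sqrt{2\gamma})^{s/a}2^{-js}$, the $2^{js}$ factors cancel and the bound becomes uniform in $j$. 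Letting $\epsilon\to 0$ and taking the supremum over $j$ yields $\|f^\dagger\|_{s,p,\infty}\leq(\sqrt{2}K)^{s/a}\gamma^{(2a+s)/(2a)}$, which is both \itemref{it:reg} and the claimed quantitative converse bound. The one conceptually nontrivial step is the bootstrap in the previous paragraph: without feeding \itemref{it:rate} back into the minimality estimate, the naive residual bound $\sqrt{2\alpha\|f^\dagger\|_{0,p,1}}$ would only propagate to the strictly weaker smoothness $s'=2as/(s+2a)<s$, and the argument would fail to close at the correct exponent.
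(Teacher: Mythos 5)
Your proof is correct and follows essentially the same route as the paper's: the same bootstrap of the assumed rate into the minimality inequality to bound the residual $\|F(f_\alpha)-g^\dagger\|_\Yspace\le\sqrt{2\gamma}\,\alpha^{(s+a)/(s+2a)}$, the same application of \Cref{theo:sparsity} with $\tilde p=p$, and the same limiting argument at the threshold value of $\alpha$. The only detail the paper adds, and which your appeal to \Cref{rate:deterministic} implicitly requires, is the observation that the derivative bound $K<\infty$ yields via the mean value theorem a Lipschitz estimate $\|F(f_1)-F(f_2)\|_\Yspace\le K\|f_1-f_2\|_{-a,p,1}$ that can replace \Cref{Flipschitz} in the proof of \Cref{vari_thm_global}, so that the deterministic rate theorem is applicable under the corollary's weaker hypotheses.
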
 
\begin{proof}
The mean value theorem and the boundedness assumption on the derivative of $F$ implies 
\begin{align}
\|F(f_1)-F(f_2)\|_{\Yspace}\leq K \|f_1-f_2\|_{-a,p,1}\quad\text{for all } f_1, f_2\in D. \label{eq:Flipschitzp}
\end{align}
Since $a>0$ the same arguments as in \Cref{existence_of_minimizer}  imply the existence of minimizers in \eqref{Tyk_exactdata}.
The implication $\ref{it:reg} \Rightarrow \ref{it:rate}$ follows from 
\Cref{rate:deterministic}. (note that \Cref{Flipschitz} can be replaced by \eqref{eq:Flipschitzp} in the proof of \Cref{vari_thm_global}). \\
To prove the implication $\ref{it:rate} \Rightarrow \ref{it:reg}$,  
 we will first estimate $\|F(f_\alpha) -g^\dagger\|_\Yspace$ and then use 
\Cref{theo:sparsity} with $\tilde{p}=p$. Let $\alpha>0$. Since $f_\alpha$ is a solution to \eqref{Tyk_exactdata} we obtain 
\begin{align*}
 \frac{1}{2} \|F(f_\alpha)-g^\dagger \|_{\Yspace}^2 
\leq \alpha \left( \|f^\dagger\|_{0,p,1}-\|f_a\|_{0,p,1} \right)
\leq \alpha \|f_\alpha- f^\dagger \|_{0,p,1} \leq \gamma \alpha^{\frac{2(s+a)}{s+2a}}\,.  
\end{align*}
\Cref{theo:sparsity} yields  
\[ 
 Q_j f_\alpha=0\qquad \text{if }\quad  \sqrt{2\gamma}K\alpha^{\frac{-a}{s+2a}} < 2^{ja}. 
\] 
Hence, given $j\in \mathbb{N}_0$ we have
\begin{align*}
2^{jd\left(\frac{1}{2}-\frac{1}{p}\right)}\| Q_j f^\dagger \|_{p} & =  2^{jd\left(\frac{1}{2}-\frac{1}{p}\right)}\| Q_j (f_\alpha  - f^\dagger)\|_p  
 \leq \| f_\alpha  - f^\dagger\|_{0,p,1}\leq \gamma  \alpha^\frac{s}{s+2a}.  
\end{align*}
for all $\alpha>0$ satisfying 
$K\sqrt{2\gamma}\alpha^{\frac{-a}{s+2a}} < 2^{ja}$,
using the assumed convergence rate once more.  
Letting $\alpha$ tend to  $\left( K 2^{-ja} \sqrt{2 \gamma} \right)^\frac{s+2a}{a}$ from above we obtain 
\[ 2^{jd\left(\frac{1}{2}-\frac{1}{p}\right)} \| Q_j f^\dagger \|_p\leq \gamma \left(\left( K 2^{-ja} \sqrt{2 \gamma} \right)^\frac{s+2a}{a}\right)^\frac{s}{s+2a}= \gamma^{\frac{2a+s}{2a}}  (\sqrt{2}K)^\frac{s}{a} 2^{-sj}. 
\]
This implies 
\[ \| f^\dagger \|_{s,p,\infty}= \sup_{j\in \nat}2^{sj} 2^{jd\left(\frac{1}{2}-\frac{1}{p}\right)} \|Q_j f^\dagger \|_{p} \leq \gamma^{\frac{2a+s}{2a}} (\sqrt{2}K)^\frac{s}{a} <\infty. 
\]
Hence $f^\dagger\in B^s_{p,\infty}.$
\end{proof}

\begin{remark} 
The assumptions of \Cref{theo:converse} are only natural if 
$\Yspace=L^p(\Omegaobs)$ with the same $p$ as in \eqref{eq:p_in_12} and 
\eqref{Tyk_exactdata}. However, if data error bounds are given with respect to 
the $L^2(\Omegaobs)$-norm, then $\Yspace$ should be chosen as $L^2(\Omegaobs)$ 
which also has computational advantages. Then $\tilde{p}=2$ is natural in 
\Cref{theo:sparsity}, and the corresponding assumptions are equivalent to \Cref{Flipschitz}. 
But for $\tilde{p}=2$ the obtained sparsity bound only yields a converse result for $p=2$ 
with the previous proof. It remains an open question if for $p<\tilde{p}$ the 
approximation error bound \eqref{rate:deterministic:exact} is valid on a larger set 
than $B^s_{p,\infty}$. 

Of course converse result for the error bound \eqref{rate:deterministic:noisy} with 
noisy data would be of interest as well. 
This seems to require lower bounds on the effect of data noise, which are beyond the 
scope of this paper. 
\end{remark}

\section{Convergence analysis for random noise}
In this section we let $a>\frac{d}{2}$. It suffices to require $(\dframe_\lambda)_{\lambda\in \Lambda}$ 
to be a frame. 
We will make the following assumption on the noise process.   
\begin{assumption}\label{noise_assumption}
Let $p^\prime\in [2,\infty]$ be conjugate to $p$ in \eqref{Tykstat}, 
i.e.~$\frac{1}{p}+\frac{1}{p^\prime}=1$. We assume that there 
exists $\tilde{d}\in [0,2a)$ such that $Z$ is a random variable with 
\[
Z\in B^{-\tilde{d}/2}_{p^\prime,\infty}(\Omegaobs)
\quad\text{almost surely. }\] 
\end{assumption} 
For Gaussian white noise on $\Omegaobs=(\mathbb{R}/\mathbb{Z})^d$ this assumption is satisfied with $\tilde{d}=d$ if 
$p>1$, but only with 
$\tilde{d}>d$ if $p=1$ (see \cite[Thm.~3.4]{Veraar2011}). 
For Gaussian white noise and the Brownian bridge prozess on $\Omegaobs=[0,1]$ the assumption is satisfied with $\tilde{d}=d$ if $p>1$ (see \cite[Thm.~4.4.3]{GN:15}) and it follows with $\tilde{d}>d$  for $p=1$ from the embedding $B^{-d/2}_{\tilde{p},\infty}(\Omegaobs) \subset B^{-\tilde{d}/2}_{\infty,\infty}(\Omegaobs)$ for $\tilde{p}\in [1,\infty)$ such that $-\frac{\tilde{d}}{2}>-\frac{d}{2} -\frac{d}{\tilde{p}}$ (see \cite[\S 3.3.1]{Triebel2010})). 
%
We have introduced the variable $\tilde{d}$ to treat the two cases simultaneously. 
Our setting even covers much more general noise processes. 


The following lemma will be used to guarantee existence of minimizers of 
\eqref{Tykstat} and to prove statistical convergence rates. 
It uses standard Besov norms of functions on $\Omegaobs$. In order to 
distinguish them from the Besov-type norms $\|\cdot\|_{s,p,q}$ of functions 
on $\Omega$ defined via wavelet coefficients and to avoid nested indices, 
we will use the notation $\norm{g}{B^s_{p,q}(\Omegaobs)}$ 
or $\norm{g}{B^s_{p,q}}$ instead of $\|g\|_{B^s_{p,q}(\Omegaobs)}$.

\begin{lemma}\label{interpolation}
Suppose $p\in[1,2]$ and $0<s<a$, then 
\[ \norm{g}{B^s_{p,1}} \leq C 
\norm{g}{L^2}^{1-\frac{s}{a}} \norm{g}{B^a_{p,1}}^\frac{s}{a}\quad\text{for all } g\in B^a_{p,1}(\Omegaobs)\cap L^2(\Omegaobs)\] 
with a constant $C>0$ independent of $g$. 
\end{lemma}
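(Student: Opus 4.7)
The plan is to prove the inequality via a Littlewood--Paley (wavelet) characterization of $B^s_{p,1}(\Omegaobs)$ combined with a dyadic splitting of the norm at a scale chosen to balance two competing estimates. Since $\Omegaobs$ is a bounded $d$-dimensional Riemannian manifold, there exists a decomposition $\{P_j\}_{j\geq 0}$ on $\Omegaobs$ (e.g., wavelet block projections for a suitable wavelet frame on $\Omegaobs$) such that
\[
\norm{g}{B^\sigma_{p,1}(\Omegaobs)} \asymp \sum_{j=0}^\infty 2^{j\sigma}\, 2^{jd(\frac{1}{2}-\frac{1}{p})}\, \|P_j g\|_{L^p(\Omegaobs)}
\]
for any $\sigma$ in a large range; this is a standard fact and is the only nontrivial ingredient.

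Having fixed such a characterization, I would use two block-wise estimates. First, since $p\in[1,2]$ and the number of wavelets at scale $j$ is $\lesssim 2^{jd}$, Hölder's inequality yields $2^{jd(\frac{1}{2}-\frac{1}{p})}\|P_j g\|_{p}\lesssim \|P_j g\|_2$, and the Bessel property of the frame gives $\|P_j g\|_2 \lesssim \norm{g}{L^2(\Omegaobs)}$; hence
\[
2^{jd(\frac{1}{2}-\frac{1}{p})}\|P_j g\|_p \lesssim \norm{g}{L^2(\Omegaobs)} \qquad\text{for every } j\geq 0.
\]
Second, the definition of the $B^a_{p,1}$-norm trivially gives $2^{ja}2^{jd(\frac{1}{2}-\frac{1}{p})}\|P_j g\|_p \leq \norm{g}{B^a_{p,1}(\Omegaobs)}$. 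Using the first bound on the blocks $0\leq j\leq N$ and the second on the blocks $j>N$ and summing the two geometric series in $2^{js}$ and $2^{-j(a-s)}$ respectively produces
\[
\norm{g}{B^s_{p,1}(\Omegaobs)} \lesssim 2^{Ns}\norm{g}{L^2(\Omegaobs)} + 2^{-N(a-s)}\norm{g}{B^a_{p,1}(\Omegaobs)}.
\]

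It then remains to optimize $N\in\mathbb{N}_0$. The two terms balance when $2^{Na}\asymp \norm{g}{B^a_{p,1}(\Omegaobs)}/\norm{g}{L^2(\Omegaobs)}$, so choosing $N$ to be the nearest non-negative integer to $a^{-1}\log_2\bigl(\norm{g}{B^a_{p,1}(\Omegaobs)}/\norm{g}{L^2(\Omegaobs)}\bigr)$ delivers the geometric-mean bound $\norm{g}{L^2(\Omegaobs)}^{1-s/a}\norm{g}{B^a_{p,1}(\Omegaobs)}^{s/a}$ up to a constant depending only on $s$ and $a$. The degenerate case $\norm{g}{B^a_{p,1}(\Omegaobs)}\leq \norm{g}{L^2(\Omegaobs)}$, in which the optimal $N$ would be negative, is handled by the continuous embedding $B^a_{p,1}(\Omegaobs)\hookrightarrow B^s_{p,1}(\Omegaobs)$ valid since $s<a$ on the bounded manifold $\Omegaobs$. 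The main obstacle is really bookkeeping: one must quote or verify the Littlewood--Paley characterization of $B^\sigma_{p,1}(\Omegaobs)$ on the Riemannian manifold; once that is in place, the split-and-optimize step is routine.
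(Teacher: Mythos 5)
Your proof is correct, but it takes a genuinely different route from the paper's. The paper disposes of the lemma in two lines: it quotes the real-interpolation inequality $\norm{g}{B^s_{p,1}}\leq C\norm{g}{B^0_{p,2}}^{1-s/a}\norm{g}{B^a_{p,1}}^{s/a}$ from Triebel (the spaces $B^0_{p,2}$ and $B^a_{p,1}$ interpolate to $B^s_{p,1}$ with parameter $\theta=s/a$ and fine index $1$), and then replaces $\norm{g}{B^0_{p,2}}$ by $\norm{g}{L^2}=\norm{g}{B^0_{2,2}}$ using the embedding $B^0_{2,2}(\Omegaobs)\hookrightarrow B^0_{p,2}(\Omegaobs)$, valid for $p\leq 2$ on the bounded manifold. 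You instead prove the interpolation inequality by hand: a Littlewood--Paley/wavelet block decomposition, the two block-wise bounds (low frequencies controlled by $\norm{g}{L^2}$ via H\"older over the $\lesssim 2^{jd}$ coefficients per level and the Bessel property, high frequencies by the tail of the $B^a_{p,1}$-norm), geometric summation on either side of a cut-off $N$, and optimization of $N$, with the degenerate case $\norm{g}{B^a_{p,1}}\lesssim\norm{g}{L^2}$ handled by the embedding $B^a_{p,1}\hookrightarrow B^s_{p,1}$. Both arguments ultimately rest on the same nontrivial input, namely a Littlewood--Paley characterization of Besov spaces on $\Omegaobs$; the paper hides it inside the cited interpolation theorem, while you expose it and make the split-and-balance mechanism explicit, which is more self-contained and shows where the exponents $1-s/a$ and $s/a$ come from. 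One small bookkeeping caveat: your displayed characterization $\norm{g}{B^\sigma_{p,1}}\asymp\sum_j 2^{j\sigma}2^{jd(1/2-1/p)}\norm{P_jg}{L^p}$ mixes two normalization conventions --- with the factor $2^{jd(1/2-1/p)}$ present, $\norm{P_jg}{L^p}$ should be read as the $\ell^p$-norm of the level-$j$ coefficients (which is how you in fact use it in the H\"older step); with genuine $L^p$-norms of the blocks that factor should be dropped. Either convention makes the argument go through, so this is a notational slip rather than a gap.
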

\begin{proof}
By interpolation theory (see \cite[3.3.6]{Triebel2010}) 
this holds true if $\norm{g}{L^2}$ is replaced by 
$\norm{g}{B^0_{p,2}}$. 
The claim follows as $p\leq 2$ implies $\norm{g}{B^0_{p,2}}\leq 
\norm{g}{B^0_{2,2}} = \norm{g}{L^2}$. 
\end{proof}

\subsection{existence of minimizers}
\begin{proposition} 
Assume \eqref{eq:completeness}, \eqref{eq:p_in_12}, \eqref{eq:domain_closed} and Assumptions \ref{Flipschitz}, \ref{operator_additional} and \ref{noise_assumption}.
Let $\alpha>0$, and suppose $\gobs$ is given by \eqref{statistical_noise}. Then the Tikhonov functional in \eqref{Tykstat} has a solution $\hat{f}_\alpha$ almost surely.
\end{proposition}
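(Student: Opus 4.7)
The functional in \eqref{Tykstat} is not obviously well-defined, since $Z$ need not belong to $L^2(\Omegaobs)$; the first step is to interpret the pairing $\langle\gobs,F(f)\rangle$ via the duality $B^{-\tilde d/2}_{p^\prime,\infty}(\Omegaobs)\times B^{\tilde d/2}_{p,1}(\Omegaobs)$. Fix a reference point $f_0\in D$ and decompose
\[
\langle\gobs,F(f)\rangle=\langle g^\dagger,F(f)\rangle+\epsilon\langle Z,F(f_0)\rangle+\epsilon\langle Z,F(f)-F(f_0)\rangle.
\]
By \Cref{operator_additional}, $F(f)-F(f_0)\in B^a_{p,1}(\Omegaobs)\subseteq B^{\tilde d/2}_{p,1}(\Omegaobs)$ (note $\tilde d/2<a$), so the third pairing is defined almost surely and is bounded by $\|Z\|_{B^{-\tilde d/2}_{p^\prime,\infty}}\|F(f)-F(f_0)\|_{B^{\tilde d/2}_{p,1}}$.

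The main task is coercivity. Combining \Cref{interpolation} (with $s=\tilde d/2$) and \Cref{operator_additional} yields
\[
\|F(f)-F(f_0)\|_{B^{\tilde d/2}_{p,1}}\le C_1\|F(f)-F(f_0)\|_{L^2}^{1-\theta}\|f-f_0\|_{0,p,1}^{\theta},\qquad \theta:=\tfrac{\tilde d}{2a}\in[0,1).
\]
Young's inequality with exponents $2/(1-\theta)$ and $2/(1+\theta)$ then gives, for every $\eta>0$,
\[
\epsilon\,|\langle Z,F(f)-F(f_0)\rangle|\le \eta\,\|F(f)-F(f_0)\|_{L^2}^2+C_{\eta,Z}\,\|f-f_0\|_{0,p,1}^{2\theta/(1+\theta)}.
\]
Rewriting the data part of the functional as $\tfrac12\|F(f)\|_{L^2}^2-\langle\gobs,F(f)\rangle=\tfrac12\|F(f)-g^\dagger\|_{L^2}^2-\tfrac12\|g^\dagger\|_{L^2}^2-\epsilon\langle Z,F(f)\rangle$ and choosing $\eta$ small, the quadratic data discrepancy absorbs the $L^2$-contribution while the penalty $\alpha\|f\|_{0,p,1}$ dominates the sub-linearly growing term $C_{\eta,Z}\|f-f_0\|_{0,p,1}^{2\theta/(1+\theta)}$ (since $2\theta/(1+\theta)<1$). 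Hence the Tikhonov functional is bounded below and satisfies $T(f)\to\infty$ as $\|f\|_{0,p,1}\to\infty$, so every minimizing sequence $(f_m)$ is bounded in $B^0_{p,1}$.

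The remaining steps parallel \Cref{existence_of_minimizer}. By the compact embedding $B^0_{p,1}\hookrightarrow B^{-a}_{2,1}$ extract a subsequence $f_{m_k}\to f^*$ in $B^{-a}_{2,1}$; then $Q_j f_{m_k}\to Q_j f^*$ for each $j$, so $p_n(f^*)\le\liminf_k\|f_{m_k}\|_{0,p,1}$ for all $n$, giving $\|f^*\|_{0,p,1}\le\liminf_k\|f_{m_k}\|_{0,p,1}<\infty$. Assumption~\eqref{eq:domain_closed} then forces $f^*\in D$, and \Cref{Flipschitz} yields $F(f_{m_k})\to F(f^*)$ in $L^2$. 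The one new continuity issue is $\langle Z,F(f_{m_k})\rangle\to\langle Z,F(f^*)\rangle$: since $\|F(f_{m_k})-F(f^*)\|_{B^a_{p,1}}$ is uniformly bounded by \Cref{operator_additional} and $\|F(f_{m_k})-F(f^*)\|_{L^2}\to 0$, \Cref{interpolation} yields convergence in $B^{\tilde d/2}_{p,1}$, whence the duality pairing with $Z$ passes to the limit. The main obstacle is really the coercivity step; the quantitative restriction $\tilde d<2a$ in \Cref{noise_assumption} is precisely what makes the interpolation exponent $\theta<1$ and thus allows Young's inequality to produce a sub-linear remainder compatible with the $1$-homogeneous penalty.
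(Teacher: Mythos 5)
Your proof is correct and follows essentially the same route as the paper: coercivity of the Tikhonov functional via \Cref{interpolation} and \Cref{operator_additional} plus Young's inequality, yielding a remainder growing like $\|f\|_{0,p,1}^{2\tilde d/(2a+\tilde d)}$ with sublinear exponent, followed by the compactness and lower-semicontinuity argument of \Cref{existence_of_minimizer}. You in fact supply details the paper's sketch leaves implicit (the interpretation of $\langle \gobs,F(f)\rangle$ by duality and the convergence of $\langle Z,F(f_{m_k})\rangle$ along the subsequence), so no gaps.
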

\begin{proof}[Proof. (sketch)]
Let $\tilde{d}\in (0,\infty)$ such that $\frac{d}{2}< \frac{\tilde{d}}{2}<a $. 
We denote the H\"older conjugate of $p$ by $p^\prime\in [1,\infty]$. 
By \Cref{noise_assumption} 
we have
\(
N:= \norm{\gobs}{B^{-\tilde{d}/2}_{p^\prime,\infty}} < \infty
\) almost surely. 
As in the proof of \cite[Prop.~4.8]{weidling2018optimal} we can bound the data fidelity 
term from below using eqs.~\eqref{interpolation} and \eqref{operator_additional} 
to obtain the following 
lower bound on the Tikhonov functional with positive constants $A$ and $c$:
\begin{align*}
 \frac{1}{2} \norm{F(f)}{L^2}^2 -\langle \gobs, F(f)\rangle +\alpha  \|f\|_{0,p,1} \geq -c N^{\frac{4a}{2a+\tilde{d}}} \|f\|_{0,p,1}^\frac{2\tilde{d}}{2a+\tilde{d}} + \alpha  \|f\|_{0,p,1}- A.
\end{align*}
Since $\frac{2\tilde{d}}{2a+\tilde{d}}< 1$ the right hand side tends to $\infty$ as $\|f\|_{0,p,1}\rightarrow \infty.$ Therefore,  a minimizing sequence of the Tikhonov functional in \eqref{Tykstat} is bounded in $B^0_{p,1}$ almost surely. 
With the same arguments as in the proof \Cref{existence_of_minimizer} we conclude the existence of a global minimum. 
\end{proof} 

\subsection{convergence rates}
Using the variational source condition and the interpolation inequality in 
\Cref{interpolation} we obtain the following error bound: 
\begin{theorem}\label{theorem_stat_rates}
Suppose \eqref{eq:p_in_12},  and Assumptions \ref{Finvlipschitz}, \ref{Flipschitz}, \ref{operator_additional} and \ref{noise_assumption} hold true.
Let $\gobs$ given by \eqref{statistical_noise}, let $s>0$, and suppose that 
$f^\dagger\in B_{p,\infty}^s$ with $\|f^\dagger \|_{s,p,\infty}\leq \varrho$. 
If $\alpha$ is chosen such that 
$\alpha\sim \varrho^\frac{\tilde{d}-2a}{\tilde{d}+2a+2s}$, 
then there exists a constant $C>0$ independent of $f^\dagger$, $Z$, 
$\varepsilon$ and $\varrho$ such that every global minimizer $\hat{f}_\alpha$ of \eqref{Tykstat}  satisfies
\begin{align}    \| \hat{f}_\alpha-f^\dagger \|_{0,p,1}  \leq C  \varrho^\frac{2a+\tilde{d}}{2a+2s+\tilde{d}} \epsilon^\frac{2s}{2a+2s+\tilde{d}} \left( 1+  \norm{Z}{B^{-\tilde{d}/2}_{p^\prime,\infty}}^\frac{4a}{2a-\tilde{d}}\right).\label{stat_rate}
\end{align}
\end{theorem}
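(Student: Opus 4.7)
The plan is to combine the variational source condition from \Cref{vari_thm_global} with the general error bound \eqref{eq:error_decomp_X} from \Cref{prop:standard_conv}, and then estimate the effective noise level $\err(F(\hat{f}_\alpha))$ for the random noise model \eqref{statistical_noise} by using the smoothing property \Cref{operator_additional} of $F$ together with the interpolation inequality \Cref{interpolation}. Writing $x := \|\hat{f}_\alpha - f^\dagger\|_{0,p,1}$, $y := \|F(\hat{f}_\alpha) - F(f^\dagger)\|_{L^2}$ and $M := \|Z\|_{B^{-\tilde{d}/2}_{p',\infty}}$, the source condition from \Cref{vari_thm_global} and the first part of \Cref{prop:standard_conv} (with $C_\err=2$) give, after substituting \eqref{eq:psi_bound},
\begin{equation*}
\tfrac{1}{2} x \;\leq\; \tfrac{1}{2\alpha}\,\err(F(\hat{f}_\alpha))
\;+\; C_\psi\,\varrho^{\frac{2a}{s+2a}}(4\alpha)^{\frac{s}{s+2a}}.
\end{equation*}

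The first step in bounding the effective noise level is to expand \eqref{eq:defi_err} with $\mathcal{S}$ as in \eqref{eq:S_rand} and $\gobs = g^\dagger + \epsilon Z$; the $\|F(\hat{f}_\alpha)\|^2$ and cross terms collapse so that
\begin{equation*}
\err(F(\hat{f}_\alpha)) \;=\; 2\epsilon\,\langle F(\hat{f}_\alpha)-F(f^\dagger),Z\rangle \;-\; \tfrac{1}{2}y^2.
\end{equation*}
I would then estimate the inner product by duality between $B^{\tilde{d}/2}_{p,1}(\Omegaobs)$ and $B^{-\tilde{d}/2}_{p',\infty}(\Omegaobs)$, and control the $B^{\tilde{d}/2}_{p,1}$-norm by \Cref{interpolation} with exponent $\tilde{d}/(2a)\in[0,1)$, using \Cref{operator_additional} to replace $\|F(\hat{f}_\alpha)-F(f^\dagger)\|_{B^a_{p,1}}$ by $L_3 x$. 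This yields
\begin{equation*}
2\epsilon\,|\langle F(\hat{f}_\alpha)-F(f^\dagger),Z\rangle| \;\leq\; C_1\,\epsilon M\,y^{1-\tilde{d}/(2a)}\,x^{\tilde{d}/(2a)}.
\end{equation*}

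The next step is a weighted Young inequality with conjugate exponents $p=2/(1-\tilde{d}/(2a))$ and $q = 4a/(2a+\tilde{d})$, which is designed precisely to absorb the $y$-dependent factor into the negative $-\tfrac{1}{2}y^2$ term. This gives
\begin{equation*}
\err(F(\hat{f}_\alpha)) \;\leq\; C_2\,(\epsilon M)^{\frac{4a}{2a+\tilde{d}}}\,x^{\frac{2\tilde{d}}{2a+\tilde{d}}},
\end{equation*}
which, inserted back into the error bound, produces a term $\tfrac{C_2}{2\alpha}(\epsilon M)^{4a/(2a+\tilde{d})} x^{2\tilde{d}/(2a+\tilde{d})}$. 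Since the exponent $2\tilde{d}/(2a+\tilde{d})<1$ (using $\tilde{d}<2a$), a second Young inequality absorbs a small multiple of $x$ on the left and leaves a term of size $\alpha^{-(2a+\tilde{d})/(2a-\tilde{d})}(\epsilon M)^{4a/(2a-\tilde{d})}$ on the right. Finally, balancing this noise term with the bias term $\varrho^{2a/(s+2a)}\alpha^{s/(s+2a)}$ dictates the choice of $\alpha$ in the theorem and yields the claimed rate, with the $(1+M^{4a/(2a-\tilde{d})})$ factor reflecting that, for the purely $\varrho$-driven choice of $\alpha$, only the noise term carries the $M$-dependence while the bias term does not.

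The principal obstacle is the careful bookkeeping in the two Young inequalities: one must match the exponent $2/(1-\tilde{d}/(2a))$ to the $y^2$ that is to be cancelled, and then fine-tune the second Young exponent so that the resulting power of $\alpha$ combines with the bias term $\alpha^{s/(s+2a)}$ to give exactly the exponent $2s/(2a+2s+\tilde{d})$ of $\epsilon$ after optimization. Everything else -- existence of $\hat f_\alpha$, the applicability of \Cref{vari_thm_global}, and the structural form of $\err$ -- is taken directly from the preceding sections, so the real work is this interpolation-plus-Young-inequality gymnastics and the optimal choice of $\alpha$.
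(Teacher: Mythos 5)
Your proposal is correct and follows essentially the same route as the paper's proof: the error decomposition from \Cref{prop:standard_conv}, the duality pairing of $F(\hat f_\alpha)-g^\dagger$ with $Z$, the interpolation inequality of \Cref{interpolation} combined with \Cref{operator_additional}, two Young inequalities with exactly the exponents you name, and the final balancing of the noise and bias terms. The only (immaterial) difference is that you absorb the factor $\|F(\hat f_\alpha)-g^\dagger\|_{L^2}^{1-\tilde d/(2a)}$ into the negative quadratic term retained in $\err$ by taking $C_{\err}=2$, whereas the paper works with the pure cross-term form of $\err$ and instead substitutes the image-space bound \eqref{rates:err_image}, arriving at an inequality with $\err$ on both sides that it then rearranges.
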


\begin{proof}
As in the proof of \Cref{rate:deterministic}, 
\Cref{vari_thm_global} and \Cref{prop:standard_conv}  imply the error bound 
\eqref{rate:deterministic:noisy} on $\|\hat{f}_\alpha - f^\dagger\|_{0,p,1}$ as well as the bound 
\begin{align}\label{rates:err_image}
 \norm{F( \hat{f}_\alpha)-g^\dagger}{L^2}^2\leq 2 \mathbf{err}\left(F( \hat{f}_\alpha)\right)+4 \alpha\psi( 4\alpha) 
\end{align}
with $\psi(t) =C_\psi \varrho^{\frac{2a}{s+2a}} t^{\frac{s}{s+2a}}$ (see \eqref{eq:psi_bound}). 
As the effective noise level is given by
\[ \mathbf{err}(g)=\mathcal{S}(g^\dagger)-\mathcal{S}(g) +  \norm{g-g^\dagger}{L^2}^2=2 \varepsilon\langle Z , g-g^\dagger \rangle
\] 
with  $\mathcal{S}(g)=  \|g\|^2_{L^2} -2  \langle \gobs , g\rangle$, 
it can be bounded by 
\[ \mathbf{err}(F(\hat{f}_\alpha))
\leq 2\varepsilon \norm{Z}{B^{-\tilde{d}/2}_{p^\prime,\infty}}\cdot 
\norm{F(\hat{f}_\alpha)-g^\dagger}{B^{\tilde{d}/2}_{p,1}}. 
\]
To estimate the second factor we use first \Cref{interpolation} and \eqref{operator_additional} to obtain 
\begin{align*}
\norm{F(\hat{f}_\alpha)-g^\dagger}{B^{\tilde{d}/2}_{p,1}} 
&  \leq C \norm{F(\hat{f}_\alpha)-g^\dagger}{L^2}^{1-\tilde{d}/2a} 
\norm{F(\hat{f}_\alpha)-F(f^\dagger)}{B^a_{p,1}}^{\tilde{d}/2a} \\ 
&\leq C \norm{F(\hat{f}_\alpha)-g^\dagger}{L^2}^{1-\tilde{d}/2a} 
\|\hat{f}_\alpha-f^\dagger\|_{0,p,1}^{\tilde{d}/2a}
\end{align*}
with a generic constant $C>0.$ The image space error bound \eqref{rates:err_image} yields 
\begin{align*}
 \mathbf{err}(F(\hat{f}_\alpha))&\leq 2C \norm{\varepsilon Z}{B^{-\tilde{d}/{2}}_{p^\prime,\infty}}\cdot \norm{F(\hat{f}_\alpha)-g^\dagger}{L^2}^{1-\tilde{d}/2a} \cdot \|\hat{f}_\alpha-f^\dagger\|_{0,p,1}^{\tilde{d}/2a} \\
 &\leq 2C \norm{\varepsilon Z}{B^{-\tilde{d}/{2}}_{p^\prime,\infty}}\cdot  \|\hat{f}_\alpha-f^\dagger\|_{0,p,1}^{\tilde{d}/2a}  \cdot \left[ 4 \mathbf{err}\left(F( \hat{f}_\alpha)\right)+4 \alpha\psi( 4\alpha)\right]^{\frac{2a-\tilde{d}}{4a}} \\
 &\leq C  \norm{\varepsilon Z}{B^{-\tilde{d}/{2}}_{p^\prime,\infty}}^\frac{4a}{2a+\tilde{d}}\cdot  \|\hat{f}_\alpha-f^\dagger\|_{0,p,1}^{\frac{2\tilde{d}}{2a+\tilde{d}}} +\frac{1}{2}  \left[  \mathbf{err}\left(F( \hat{f}_\alpha)\right)+ \alpha\psi( 4\alpha)\right],
\end{align*}
where the last step follows from Young's inequality and requires a replacement of the constant $C$. 
Subtracting the $\mathbf{err}$ term yields an upper bound 
\[ \mathbf{err}(F(\hat{f}_\alpha))
\leq C  \norm{\varepsilon Z}{B^{-\tilde{d}/{2}}_{p^\prime,\infty}}^\frac{4a}{2a+\tilde{d}}\cdot  \|\hat{f}_\alpha-f^\dagger\|_{0,p,1}^{\frac{2\tilde{d}}{2a+\tilde{d}}} +\alpha\psi( 4\alpha).\]
Inserting this bound into \eqref{rate:deterministic:noisy} we deduce 
\begin{align*}
 \frac{1}{2} \| \hat{f}_\alpha-f^\dagger \|_{0,p,1} 
& \leq \frac{C}{\alpha}  \norm{\varepsilon Z}{B^{-\tilde{d}/{2}}_{p^\prime,\infty}}^\frac{4a}{2a+\tilde{d}}\cdot  \|\hat{f}_\alpha-f^\dagger\|_{0,p,1}^{\frac{2\tilde{d}}{2a+\tilde{d}}} + \psi( 4\alpha)+ \psi(2\alpha) \\ 
 &\leq \frac{C}{\alpha}  \norm{\varepsilon Z}{B^{-\tilde{d}/2}_{p^\prime,\infty}}^\frac{4a}{2a+\tilde{d}}\cdot  \|\hat{f}_\alpha-f^\dagger\|_{0,p,1}^{\frac{2\tilde{d}}{2a+\tilde{d}}} + 2 \psi( 4\alpha) \\ 
 &  \leq \frac{1}{4}  \|\hat{f}_\alpha-f^\dagger\|_{0,p,1} + C \alpha^{\frac{\tilde{d}+2a}{\tilde{d}-2a}}  \norm{\varepsilon Z}{B^{-\tilde{d}/2}_{p^\prime,\infty}}^\frac{4a}{2a-\tilde{d}}+  2 \psi( 4\alpha).
\end{align*}
Here again the last step follows from Young's inequality and requires a replacement of the constant $C$. 
Rearranging terms and using the explicit expression for $\psi$ we obtain a constant $C>0$ such that 
\[ \| \hat{f}_\alpha-f^\dagger \|_{0,p,1}  \leq C \left( \alpha^{\frac{\tilde{d}+2a}{\tilde{d}-2a}}  \norm{\varepsilon Z}{B^{-\tilde{d}/2}_{p^\prime,\infty}}^\frac{4a}{2a-\tilde{d}} +\alpha^{\frac{s}{s+2a}}  \varrho^{\frac{2a}{s+2a}}.  \right) \qedhere\]
Inserting the parameter choice rule implies \eqref{stat_rate}.
\end{proof}

\Cref{theorem_stat_rates} provides not only a rate in terms of the noise level 
parameter $\varepsilon$, but also a bound of the distributation of the error 
in terms of the distribution of a negative Besov norm of the noise. 
In particular, we obtain bounds on the expectation of arbitrary moments of 
the error if the Besov norm of the noise satisfies a 
a large deviation inequality of the form 
\begin{align}\label{deviation_ineq}
\mathbb{P} \left[ \norm{Z}{B^{-\tilde{d}/2}_{p^\prime,\infty}} > M_Z+ \tau \right] \leq \exp(-C_Z \tau^\mu) \quad\text{for all } \tau>0.
\end{align}
with constants $C_Z, M_Z, \mu>0$. 
By \cite[Cor.~3.7]{Veraar2011}) such an inequality is satisfied in particular 
for Gaussian white noise with $\tilde{d}=d$ for $p>1$ and with 
$\tilde{d}>d$ for $p=1$.

\begin{corollary}\label{cor:stat_rates} 
If \eqref{deviation_ineq} holds true in addition to the assumptions of 
\Cref{theorem_stat_rates}, then 
\begin{align} \label{rate_expectation_1}  \mathbb{E}\left( \| \hat{f}_\alpha -f^\dagger\|_{0,p,1}^\sigma\right)^{1/\sigma} \leq \tilde{C} \varrho^{\frac{2a+\tilde{d}}{2a+2s+\tilde{d}}} \varepsilon^{\frac{2s}{2a+2s+\tilde{d}}}
\end{align} 
holds true for any $\sigma\geq 1$. In particular, if $Z$ is Gaussian white noise, 
then this inequality holds true with $\tilde{d}=d$ for $p\in (1,2]$ and 
with $\tilde{d}>d$ for $p=1$. 
\end{corollary}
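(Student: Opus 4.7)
The plan is to take the pointwise (almost-sure) bound \eqref{stat_rate} from \Cref{theorem_stat_rates}, raise both sides to the power $\sigma$ and integrate. Since the deterministic prefactor $\varrho^{(2a+\tilde{d})/(2a+2s+\tilde{d})}\varepsilon^{2s/(2a+2s+\tilde{d})}$ does not depend on $\omega$, after taking an $L^\sigma(\mathbb{P})$ norm the only thing to control is
\[
M_\sigma := \mathbb{E}\left[\left(1+ \norm{Z}{B^{-\tilde{d}/2}_{p^\prime,\infty}}^{\frac{4a}{2a-\tilde{d}}}\right)^\sigma\right]^{1/\sigma},
\]
so it suffices to show $M_\sigma<\infty$ with a constant independent of $f^\dagger$, $\varrho$ and $\varepsilon$ (the distribution of $Z$ being fixed) and then set $\tilde C := C\cdot M_\sigma$.

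By Minkowski and the elementary inequality $(1+x)^\sigma\leq 2^{\sigma-1}(1+x^\sigma)$, the finiteness of $M_\sigma$ reduces to showing $\mathbb{E}[\norm{Z}{B^{-\tilde{d}/2}_{p',\infty}}^{q}]<\infty$ for $q:=\sigma\cdot \tfrac{4a}{2a-\tilde{d}}$. Using the layer-cake formula
\[
\mathbb{E}\bigl[\norm{Z}{B^{-\tilde{d}/2}_{p',\infty}}^{q}\bigr]
 = q\int_0^\infty t^{q-1}\,\mathbb{P}\bigl[\norm{Z}{B^{-\tilde{d}/2}_{p',\infty}}>t\bigr]\,dt,
\]
I would split the integral at $t=M_Z$, bounding the lower part by $M_Z^q$, and apply \eqref{deviation_ineq} to the upper part with the substitution $\tau=t-M_Z$. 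The resulting tail integral $\int_0^\infty (M_Z+\tau)^{q-1}\exp(-C_Z\tau^\mu)\,d\tau$ converges for every $q>0$ because the exponential factor dominates any polynomial, which is the only nontrivial estimate required.

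Combining these gives \eqref{rate_expectation_1} with $\tilde C$ depending on $\sigma$, $C_Z$, $M_Z$, $\mu$, and the constant $C$ from \Cref{theorem_stat_rates}. For the final sentence concerning Gaussian white noise I would simply invoke \cite[Cor.~3.7]{Veraar2011} (already cited right before the corollary) to verify \eqref{deviation_ineq} with $\tilde d=d$ for $p\in(1,2]$ and with an arbitrary $\tilde d>d$ for $p=1$; \Cref{noise_assumption} is then satisfied, so the preceding argument applies and yields the claimed bound. I do not anticipate a real obstacle here: the content is essentially bookkeeping, with the only genuine step being the verification that the tail of $\norm{Z}{B^{-\tilde d/2}_{p',\infty}}$ integrates against every polynomial weight, which is immediate from \eqref{deviation_ineq}.
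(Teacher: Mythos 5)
Your proposal is correct and follows essentially the same route as the paper: reduce \eqref{rate_expectation_1} to the finiteness of all moments of $\norm{Z}{B^{-\tilde{d}/2}_{p^\prime,\infty}}$ and deduce that from the tail bound \eqref{deviation_ineq}. The only (immaterial) difference is that you integrate the tail via the layer-cake formula while the paper sums over the dyadic-free partition $\{j-1<\|Z\|-M_Z\leq j\}$; both yield the same convergent polynomial-times-$\exp(-C_Z\tau^\mu)$ estimate.
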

\begin{proof}
It suffices to show that $\mathbb{E}\left[(1+\|Z\|^t)^\sigma\right]<\infty$ for all $t>0$ 
where $\|Z\|$ denotes one of the Besov norm appearing in \eqref{stat_rate}.
Due to the inequality $(a+b)^\sigma\leq 2^\sigma(a^\sigma+b^\sigma)$ for $a,b\geq 0$ 
this reduces to showing that $\mathbb{E}\left[\|Z\|^t\right]<\infty$ for all $t>0$ which 
can be deduced from \eqref{deviation_ineq} as follows:
\begin{align*}
\mathbb{E}\left[\|Z\|^t\right]
&\leq M_Z^t\mathbb{P}\left[\|Z\|\leq M_Z\right] 
+ \sum_{j=1}^\infty (M_Z+j)^t\mathbb{P}\left[j-1<\|Z\|-M_Z\leq j\right] \\
&\leq M_Z^t + \sum_{j=1}^\infty (M_Z+j)^t\exp(-C_Zj^\mu) <\infty
\qedhere
\end{align*}
\end{proof}

\subsection{lower bounds} 
The upper bound expected reconstruction error in \Cref{cor:stat_rates}  
coincides with the following lower bound for $p>1$, and it almost coincides 
for $p=1$:
\begin{proposition}
Suppose that $Z$ in \eqref{statistical_noise} is white noise, 
\Cref{operator_additional} and \Cref{operator_s} hold true for some $s>0$, and that 
$F(B^s_{p,\infty}\cap D)$ contains an open ball in $B^{s+a}_{p,\infty}(\Omegaobs)$ 
around $g_0=F(f_0)$. Then there exists a constant $c>0$ such that 
\[
\inf_{R} \sup_{\|f^{\dagger}-f_0\|_{s,p,\infty}\leq \rho} 
\mathbb{E}\left\|R(F(f^{\dagger})+\varepsilon Z)-f^{\dagger}\right\|_{0,p,1}
\geq c \rho^{\frac{2a+d}{2s+2a+d}} \varepsilon^{\frac{2s}{2s+2a+d}}
\]
for sufficiently small $\rho$ 
where the infimum is taken over all measurable mappings $R$ from 
$B^{-d/2}_{p,\infty}(\Omegaobs)$ to $B^0_{p,1}$.  
\end{proposition}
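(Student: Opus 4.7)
The plan is to prove the lower bound by an Assouad-type argument in which the hypothesis class is constructed by perturbing $g_0$ along wavelet coefficients in the \emph{image space} $\Omegaobs$ and then pulling back via the openness assumption on $F(B^s_{p,\infty}\cap D)$. I assume a wavelet basis $\{\psi_{j,k}\colon k\in I_j\}_{j\in\mathbb{N}_0}$ of $L^2(\Omegaobs)$ with $|I_j|\asymp 2^{jd}$ which characterises the Besov spaces on $\Omegaobs$ analogously to \eqref{eq:normdef}.

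For a level $j\in\mathbb{N}_0$ (to be optimised at the end), an amplitude $\tilde\beta>0$ and $\sigma\in\{-1,+1\}^{|I_j|}$ I put
\[
g_\sigma:=g_0+\tilde\beta\sum_{k\in I_j}\sigma_k\psi_{j,k}.
\]
A direct norm computation gives $\|g_\sigma-g_0\|_{B^{s+a}_{p,\infty}(\Omegaobs)}\asymp \tilde\beta\,2^{j(s+a)+jd/2}$. Choosing $\tilde\beta$ small enough that this lies below the radius of the given open ball inside $F(B^s_{p,\infty}\cap D)$, each $g_\sigma$ admits a preimage $f_\sigma\in B^s_{p,\infty}\cap D$, and \Cref{operator_s} applied to $f_\sigma$ and $f_0$ yields $\|f_\sigma-f_0\|_{s,p,\infty}\leq L_4\|g_\sigma-g_0\|_{B^{s+a}_{p,\infty}}$. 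Imposing this to be at most $\rho$ produces the first constraint $\tilde\beta\lesssim\rho\,2^{-j(s+a+d/2)}$.

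For the lower bound I transport the recovery problem back to $\Omegaobs$. Given any measurable estimator $\hat f$, let $\hat\sigma_k:=\mathrm{sign}\langle F(\hat f)-g_0,\psi_{j,k}\rangle$; each mismatch $\hat\sigma_k\ne\sigma_k$ forces the $(j,k)$-th wavelet coefficient of $F(\hat f)-g_\sigma$ to be at least $\tilde\beta$ in magnitude, so by \Cref{operator_additional}
\[
\|\hat f-f_\sigma\|_{0,p,1}\geq L_3^{-1}\,\|F(\hat f)-g_\sigma\|_{B^a_{p,1}(\Omegaobs)}\gtrsim \tilde\beta\,2^{j(a+d(1/2-1/p))}\,d_H(\hat\sigma,\sigma)^{1/p}.
\]
For the white-noise model $\mathrm{KL}(P_\sigma\|P_{\sigma'})=\|g_\sigma-g_{\sigma'}\|_{L^2}^2/(2\varepsilon^2)=2\tilde\beta^2 d_H(\sigma,\sigma')/\varepsilon^2$, so a second constraint $\tilde\beta\leq c_0\varepsilon$ keeps the KL divergence between neighbouring hypotheses bounded. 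The tail version of Assouad's lemma then produces some $\sigma$ and a universal $\eta>0$ with $\mathbb{P}_\sigma(d_H(\hat\sigma,\sigma)\geq |I_j|/4)\geq\eta$, hence
\[
\mathbb{E}_\sigma\|\hat f-f_\sigma\|_{0,p,1}\gtrsim \tilde\beta\,2^{j(a+d(1/2-1/p))}|I_j|^{1/p}\asymp \tilde\beta\,2^{j(a+d/2)},
\]
the factor $|I_j|^{1/p}\asymp 2^{jd/p}$ exactly cancelling the $-d/p$ in the exponent and rendering the bound $p$-independent.

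Balancing the two constraints via $\tilde\beta\asymp\varepsilon$ and $2^j\asymp(\rho/\varepsilon)^{2/(2s+2a+d)}$ then yields the advertised rate $\rho^{(2a+d)/(2s+2a+d)}\varepsilon^{2s/(2s+2a+d)}$; the hypothesis that $\rho$ be sufficiently small is precisely what is needed to ensure $2^j\geq 1$ and $g_\sigma$ inside the open ball. The main obstacle is the concavity of $t\mapsto t^{1/p}$ for $p>1$, which blocks the naive estimate $\mathbb{E}[d_H^{1/p}]\gtrsim (\mathbb{E}[d_H])^{1/p}$; the resolution, as indicated, is to use the tail form of Assouad's lemma which directly produces a constant probability event on which $d_H\gtrsim |I_j|$. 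Everything else is bookkeeping: the wavelet norm computations, measurability of $\hat\sigma$, and the fact that the preimages $f_\sigma$ lie in $D\cap B^s_{p,\infty}$ so that \Cref{operator_s} applies.
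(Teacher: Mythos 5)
Your proof is correct, but it takes a genuinely different route from the paper. The paper proves this proposition by pure reduction: it quotes the known minimax lower bound for the \emph{direct} white-noise estimation problem over $B^{s^{**}}_{p,\infty}(\Omegaobs)$-balls with $B^{s^*}_{p,1}(\Omegaobs)$-loss (from \cite{DJKP:95} and \cite{weidling2018optimal}), specializes to $s^*=a$, $s^{**}=s+a$, and transports it to the inverse problem by setting $S=F\circ R$, using \Cref{operator_s} to map the prior ball $\|f^\dagger-f_0\|_{s,p,\infty}\leq\rho$ into an image-space ball of radius $\rho/L_4$ (this is where the open-ball hypothesis on $F(B^s_{p,\infty}\cap D)$ enters, to ensure the supremum over $g^\dagger$ is attained by elements of the form $F(f^\dagger)$) and \Cref{operator_additional} to pass from the $B^a_{p,1}$-loss on $F(f^\dagger)-F(R(\cdot))$ to the $B^0_{p,1}$-loss on $f^\dagger-R(\cdot)$. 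You instead inline the hypercube construction behind those cited results: a single-level Assouad cube in the image space, pulled back through the same two assumptions, with the KL computation and the tail form of Assouad's lemma replacing the citation. Your exponent bookkeeping checks out ($\tilde\beta\asymp\varepsilon$, $2^j\asymp(\rho/\varepsilon)^{2/(2s+2a+d)}$ reproduces the claimed rate), and your observation that the concavity of $t\mapsto t^{1/p}$ forces a constant-probability tail bound rather than a bound in expectation of the Hamming distance is a real technical point that the expectation form of Assouad does not deliver directly (though it follows from it via $d_H\leq|I_j|$ and Markov). What the paper's route buys is brevity and modularity; what yours buys is a self-contained argument that makes the roles of \Cref{operator_s} and \Cref{operator_additional} and the $p$-independence of the exponents (the $|I_j|^{1/p}$ factor cancelling $2^{-jd/p}$) completely explicit. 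One shared caveat, present in the paper's proof as well: \Cref{operator_additional} is stated for $f_1,f_2\in D$, whereas the estimator $R$ only maps into $B^0_{p,1}$, so strictly speaking one should either restrict to estimators with values in $D$ or extend the assumption; this is not a defect of your argument relative to the paper's.
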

\begin{proof}
We start from lower bounds for the estimation of $g^{\dagger}=F(f^{\dagger})$, 
see \cite[Thms.~7,9]{DJKP:95}, \cite[Cor.~4.12]{weidling2018optimal}. 
\[
\inf_{S} \sup_{\norm{g^{\dagger}-g_0}{B^{s^{**}}_{p,\infty}(\Omegaobs)}\leq \tilde{\rho}} 
\mathbb{E}\left[\norm{g^{\dagger}-S(g^{\dagger}+\varepsilon Z)}{B^{s^*}_{p,1}(\Omegaobs)}\right]
\geq \tilde{c} \tilde{\rho}^{\frac{2s^*+d}{2s^{**}+d}} \varepsilon^{\frac{2s^{**}-2s^{*}}{2s^{**}+d}}
\]
Here $s^{**}>s^*$, $\tilde{\rho}>0$ and $p\in [1,\infty]$ are arbitrary,  
$\tilde{c}$ depends on $s^*,s^{**}$ and $p$, and the infimum is taken over 
arbitrary reconstruction methods $S$. 
We will choose $s^*=a$ and $s^{**}=s+a$. Moreover, we set $g^{\dagger}=F(f^{\dagger})$, 
$S=F\circ R$, and $ \rho= L_4\tilde{\rho}$ to obtain 
\[
\inf_{R} \sup_{\|f^\dagger-f_0\|_{s,p,\infty}\leq \rho} 
\mathbb{E}\left[\norm{F(f^{\dagger})-F(R(F(f^{\dagger})+\varepsilon Z))}
{B^{a}_{p,1}(\Omegaobs)}\right]
\geq \tilde{c} (\rho/L_4)^{\frac{2a+d}{2s+2a+d}} \varepsilon^{\frac{2s}{2s+2a+d}}\,.
\]
Together with \Cref{operator_additional} this yields the assertion 
with $c = (\tilde{c}/L_3) L_4^{-(2a+d)(2s+2a+d)}$. 
\end{proof}

\appendix
\section{Appendix}
\subsection{comparison to assumptions in the literature}\label{sec:comparison}
We give a characterization of condition \eqref{eq:strategy_operator} for a linear, bounded operator $\linOp \colon \Xspace\rightarrow \Yspace$. In the case $\Xspace=\ell^1$ the condition \eqref{eq:strategy_operator} is equivalent to source conditions studied in \cite{burger2013convergence}. 

\begin{theorem}\label{appendix:theorem}
Let $\linOp \colon \Xspace\rightarrow \Yspace$ be a bounded, linear operator between Banach spaces $\Xspace$ and $\Yspace$ and $(P_n:\Xspace\rightarrow \Xspace)_{n\in\mathbb{N}}$ a family of projections. Suppose $(\nu_n)_{n\in\mathbb{N}}$ is an increasing sequence of positive reals. Then the following statements are equivalent:
\begin{thmlist} 
\item $ \|P_n f\|_\Xspace\leq \nu_n \|\linOp f\|_\Yspace$ for all  $f \in \Xspace$ and $n\in\mathbb{N}$. \label{appendix:1}
\item For every $\xi\in \mathcal{R}(P_n^\ast)$ with $\|\xi\|_{\Xspace^\ast}=1$, there exists $\psi\in \Yspace^\ast$ with $\|\psi\|_{\Yspace^\ast}\leq \nu_n$ such that $\linOp^\ast\psi=\xi$.\label{appendix:2} 
\end{thmlist}
\end{theorem}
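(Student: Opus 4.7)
The plan is to use a standard Hahn-Banach duality between an operator-norm bound and the solvability of the adjoint equation with a norm bound. Two ingredients drive the argument. First, since $P_n$ is a projection on $\Xspace$, its adjoint $P_n^*$ is a projection on $\Xspace^*$, so $\mathcal{R}(P_n^*) = \{\xi \in \Xspace^* : P_n^*\xi = \xi\}$; in particular every $\xi$ in this range satisfies $\langle \xi, f\rangle = \langle \xi, P_n f\rangle$ for all $f \in \Xspace$. Second, I will use the classical criterion that a functional $\xi \in \Xspace^*$ can be represented as $T^*\psi$ with $\|\psi\|_{\Yspace^*} \leq C$ if and only if $|\langle \xi, f\rangle| \leq C\|Tf\|_\Yspace$ for every $f \in \Xspace$; the nontrivial direction is proved by defining a functional on $\mathcal{R}(T)$ via $Tf \mapsto \langle \xi, f\rangle$ and extending it through Hahn-Banach to all of $\Yspace$.

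For the direction (i) $\Rightarrow$ (ii) I would fix $\xi \in \mathcal{R}(P_n^*)$ with $\|\xi\|_{\Xspace^*} = 1$, substitute $\langle \xi, f\rangle = \langle \xi, P_n f\rangle$, and apply assumption (i) to obtain $|\langle \xi, f\rangle| \leq \|\xi\|_{\Xspace^*}\|P_n f\|_\Xspace \leq \nu_n \|Tf\|_\Yspace$ for all $f \in \Xspace$. The Hahn-Banach criterion above then produces a preimage $\psi \in \Yspace^*$ with $\|\psi\|_{\Yspace^*} \leq \nu_n$ and $T^*\psi = \xi$.

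For (ii) $\Rightarrow$ (i) I would fix $f \in \Xspace$, assume $P_n f \neq 0$, and pick by Hahn-Banach some $\mu \in \Xspace^*$ with $\|\mu\|_{\Xspace^*} \leq 1$ and $\langle \mu, P_n f\rangle = \|P_n f\|_\Xspace$. Setting $\xi := P_n^*\mu \in \mathcal{R}(P_n^*)$ gives $\langle \xi, f\rangle = \|P_n f\|_\Xspace$, and applying (ii) to $\xi/\|\xi\|_{\Xspace^*}$ produces $\psi \in \Yspace^*$ with $T^*\psi = \xi/\|\xi\|_{\Xspace^*}$ and $\|\psi\|_{\Yspace^*} \leq \nu_n$. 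Combining, $\|P_n f\|_\Xspace = \|\xi\|_{\Xspace^*}\langle \psi, Tf\rangle \leq \|\xi\|_{\Xspace^*}\nu_n\|Tf\|_\Yspace$.

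The only delicate point, and the one on which the equivalence with the same constant $\nu_n$ on both sides truly hinges, is the estimate $\|P_n^*\mu\|_{\Xspace^*} \leq 1$ in the second direction. This holds when $\|P_n\| \leq 1$, as for the coordinate truncations on $\ell^1$ motivating the theorem in \Cref{ell1example}; in general one would pick up a factor of $\|P_n\|$ there. Aside from this normalization bookkeeping, both implications reduce to routine applications of Hahn-Banach.
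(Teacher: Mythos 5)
Your proof is correct and follows essentially the same route as the paper's: for (i)$\Rightarrow$(ii) a Hahn--Banach extension of the functional $Tf\mapsto\langle\xi,f\rangle$ on $\mathcal{R}(T)$ (the paper packages the same idea as the operator $S_n=P_nT^{-1}$ and its adjoint), and for (ii)$\Rightarrow$(i) a norming functional for $P_nf$ pushed through $P_n^\ast$. The normalization caveat you flag --- that (ii)$\Rightarrow$(i) yields an extra factor $\|P_n^\ast\mu\|_{\Xspace^\ast}\leq\|P_n\|$ unless the projections have norm one --- is silently present in the paper's own proof as well, which applies (ii) to $P_n^\ast\xi$ without normalizing; it is harmless in the intended application to coordinate truncations.
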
 
\begin{proof}
\item[\itemref{appendix:1}$\Rightarrow$\itemref{appendix:2}:] 
Let $n\in\mathbb{N}$ and $\xi\in \mathcal{R}(P_n^\ast)$ with $\|\xi\|_{\Xspace^\ast}=1$.
Consider the linear map \linebreak $S_n\colon \mathcal{R}(\linOp )\rightarrow \Xspace$ given by $S_ng=P_n \linOp\inv g$. If $g\in \mathcal{R}(\linOp ),$ then 
\[ \|S_n g\|_\Xspace=\| P_n \linOp\inv g\|_\Xspace\leq \nu_n \|\linOp \linOp\inv g \|_\Yspace =  \nu_n \|g\|_\Yspace.\] 
Hence $\|S_n\|\leq \nu_n$. Therefore, $S_n$ extends to $\widetilde{S_n}\colon \overline{\mathcal{R}(\linOp )} \rightarrow \Xspace$ with $\|\widetilde{S_n}\|\leq \nu_n$.  Since $P_n$ is a projection, so is $P_n^\ast$. Hence $P_n^\ast \xi = \xi$. Let $\psi:= (\widetilde{S_n})^\ast\xi\in \overline{\mathcal{R}(\linOp )}^\ast$. Then
 \[ \|\psi\|_{\overline{\mathcal{R}(\linOp )}^\ast}= \|(\widetilde{S_n})^\ast\xi\|_{\overline{\mathcal{R}(\linOp )}^\ast}\leq \|(\widetilde{S_n})^\ast\| \|\xi\|_{\Xspace^\ast}\leq \|\widetilde{S_n}\|\leq \nu_n. \]
By the Hahn-Banach theorem $\psi$ can be extended to some $\tilde{\psi}\in \Yspace^\ast$ with $\|\tilde{\psi}\|_{\Yspace^\ast}\leq\nu_n$. Let $f\in \Xspace$. We compute 
\[ \langle \linOp^\ast \tilde{\psi} ,f \rangle = \langle \psi ,\linOp f\rangle =\langle (\widetilde{S_n})^\ast(\xi), \linOp f\rangle = \langle \xi, P_n \linOp\inv \linOp f\rangle= \langle \xi,P_n f\rangle=\langle P_n^\ast \xi, f\rangle = \langle\xi,f\rangle.\] 
This shows that $\xi=\linOp^\ast \tilde{\psi}\in \mathcal{R}(\linOp^\ast).$
\item[\itemref{appendix:2}$\Rightarrow$\itemref{appendix:1}:] 
Let $n\in\mathbb{N}$ and $f\in \Xspace$. By the Hahn-Banach theorem there exists 
$\xi\in \Xspace^\ast$ with $\|\xi\|_{\Xspace^\ast}=1$ such that $|\langle \xi, P_n f\rangle|=\|P_n f\|_\Xspace$. By assumption there exists $\psi\in \Yspace^\ast$ with $\|\psi\|_{\Yspace^\ast}\leq \nu_n$ such that $\linOp^\ast\psi=P_n^\ast\xi$ . We estimate 
\[ \|P_n f\|_\Xspace = |\langle \xi, P_n f\rangle|=|\langle P_n^\ast \xi,  f\rangle|=|\langle \linOp^\ast \psi,f\rangle|=|\langle \psi,  \linOp f\rangle|\leq \nu_n \|\linOp f\|_{\Yspace}.\qedhere\]
\end{proof}
\begin{remark}
As is \Cref{ell1example} we consider $\ell^1$ with $P_n$ the projection onto the first $n$ entries. We identify $(\ell^1)^\ast=\ell^\infty$. Then $P_n^\ast\colon \ell^\infty \rightarrow \ell^\infty$ is again the projection onto the first $n$ entries. We write $e_n\in \ell^\infty$ for the unit sequence with a $1$ in the $n$th entry and $0$'s else. Then $\mathcal{R}(P_n^\ast)=\spn\{e_1,\ldots,e_n\}$. Hence \Cref{appendix:theorem} reads as $\linOp \colon \ell^1\rightarrow \Yspace$ satisfies \eqref{eq:strategy_operator} if and only if $e_n\in \mathcal{R}(\linOp^\ast)$ for all $n\in \mathbb{N}$. This condition is assumed in \cite{burger2013convergence} to obtain convergence rates of $\ell^1$-regularization.
\end{remark}

\subsection{properties of the spaces \texorpdfstring{$B^s_{p,q}$}{}} 
In this section of collect some properties of the spaces $B^s_{p,q}$ used 
in this paper. Let us start with embeddings, which follow immediately from 
the well-known and straightforward embeddings of the sequence spaces 
$b^s_{p,q}$: 
\begin{proposition}\label{prop:embeddings}
The following inclusions hold true with continuous embeddings:
\begin{enumerate}[label=(\roman*),ref=(\roman*)]
\item\label{embed_mixed} 
$B^0_{p,1}  \subset B^{-a}_{\tilde{p},1}$ for $1\leq p\leq \tilde{p}\leq 2$ and $d/p-d/\tilde{p}\leq a$. 
\item \label{embed_q}
$B^s_{p,q_1}  \subset B^{s}_{p,q_2}$ for $s\in \mathbb{R}$, $p, q_1, q_2 \in [1,\infty]$ with $q_1\leq q_2$.
\item\label{embed_p} 
$B^s_{p_2,q}  \subset B^{s}_{p_1,q}$ for  $s\in \mathbb{R}$, $p_1,p_2, q\in [1,\infty]$ with $p_1\leq p_2$.
\item \label{embed_s}
$B^s_{p,\infty}  \subset B^0_{p,1}$ for $p\in [1,\infty]$ and $s>0$.
\end{enumerate}
The norms of the corresponding embedding operators are bounded by $1$ in 
\ref{embed_mixed} and \ref{embed_q}, by $C_\Lambda^{(1/p_1-1/p_2)}$ in 
\ref{embed_p}, and by $(1-2^{-s})\inv$ in \ref{embed_s}.
\end{proposition}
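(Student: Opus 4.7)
My plan is to reduce each embedding to a level-wise inequality between the weighted blocks $2^{js}2^{jd(1/2-1/p)}\|Q_j f\|_p$ appearing in \eqref{eq:normdef}, and then to conclude by a monotonicity estimate of the outer $\ell^q$-norm in $j\in\nat$; tracking the constants produced at each step yields the stated operator norm bounds. I would treat the items in the order (ii), (iv), (iii), (i), since the last one combines ingredients from the earlier ones.

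For (ii) the block $2^{js}2^{jd(1/2-1/p)}\|Q_j f\|_p$ is untouched, so the inequality $\|\cdot\|_{\ell^{q_2}(\nat)}\le\|\cdot\|_{\ell^{q_1}(\nat)}$ gives the embedding with constant $1$. For (iv) I would split
\[
2^{jd(1/2-1/p)}\|Q_j f\|_p = 2^{-js}\bigl(2^{js}2^{jd(1/2-1/p)}\|Q_j f\|_p\bigr)\le 2^{-js}\,\|f\|_{s,p,\infty},
\]
sum the geometric series $\sum_{j\ge 0} 2^{-js}=(1-2^{-s})^{-1}$, and read off the stated constant. For (iii) only the inner $\ell^p$-norm changes: using the finite-set H\"older inequality $\|z\|_{\ell^{p_1}(\Lambda_j)}\le|\Lambda_j|^{1/p_1-1/p_2}\|z\|_{\ell^{p_2}(\Lambda_j)}$ together with $|\Lambda_j|\le C_\Lambda 2^{jd}$, the weight $2^{jd(1/2-1/p_1)}$ is converted into $C_\Lambda^{1/p_1-1/p_2}\,2^{jd(1/2-1/p_2)}$; taking the outer $\ell^q$-norm then finishes the argument with the asserted constant.

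Finally (i) is the only item coupling the two indices, so it is the only one requiring the dimensional condition $d/p-d/\tilde p\le a$. Since $p\le\tilde p$ and counting-measure $\ell^p$-norms on a (finite) set are decreasing in $p$, I have $\|Q_j f\|_{\tilde p}\le\|Q_j f\|_p$ with constant $1$. I then rewrite the weight as
\[
2^{-ja}\,2^{jd(1/2-1/\tilde p)} \;=\; 2^{jd(1/2-1/p)}\cdot 2^{j(d/p-d/\tilde p-a)},
\]
and the hypothesis makes the exponential factor at most $1$; thus the level-$j$ block for $\|\cdot\|_{-a,\tilde p,1}$ is dominated by that for $\|\cdot\|_{0,p,1}$, and summation in $j$ yields (i) with constant $1$. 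The only piece of non-routine bookkeeping is this last weight cancellation, in which the \emph{free} embedding $\ell^p\hookrightarrow\ell^{\tilde p}$ (available because we move to a larger integrability exponent on a finite counting set) is paired with a smoothness shift that exactly absorbs the dimensional deficit $d/p-d/\tilde p$; there is no real obstacle, because the result is just a transcription of the well-known embeddings of the sequence spaces $b^s_{p,q}$.
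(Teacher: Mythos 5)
Your proof is correct, and it takes the same route the paper intends: the paper offers no written proof beyond the remark that these embeddings "follow immediately from the well-known and straightforward embeddings of the sequence spaces $b^s_{p,q}$", and your level-wise block comparisons (monotonicity of $\ell^q$ in $q$, the geometric series for (iv), H\"older on the finite sets $\Lambda_j$ for (iii), and the weight cancellation using $d/p-d/\tilde p\leq a$ for (i)) are exactly those sequence-space arguments, with all stated constants reproduced correctly.
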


\begin{proposition}\label{compactness_embedding}
Suppose \eqref{eq:completeness} and $1\leq p \leq\tilde{p}\leq 2$ 
let $d/p-d/\tilde{p}<a<\overline{s}$. Then the embedding $B^0_{p,1}\subset B^{-a}_{\tilde{p},1}$ is compact.
\end{proposition}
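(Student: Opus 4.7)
My plan is to apply the standard Riesz--Kolmogorov style argument: given a bounded sequence in $B^0_{p,1}$, I decompose the $B^{-a}_{\tilde p,1}$-norm into a finite low-frequency block and a high-frequency tail, make the tail small uniformly via the strict inequality $d/p-d/\tilde p<a$, and extract a convergent subsequence in each finite-dimensional block by a diagonal argument.

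First I would establish the key comparison. Fix $\theta:=a-(d/p-d/\tilde p)>0$. Using the monotonicity $\|Q_jf\|_{\tilde p}\le\|Q_jf\|_p$ on the finite index set $\Lambda_j$ (valid since $p\le\tilde p$), a short computation shows
\[
2^{-ja}2^{jd(1/2-1/\tilde p)}\|Q_jf\|_{\tilde p}
\;\le\; 2^{-j\theta}\cdot 2^{jd(1/2-1/p)}\|Q_jf\|_p.
\]
Summing over $j>N$ gives the tail estimate
\[
\sum_{j>N}2^{-ja}2^{jd(1/2-1/\tilde p)}\|Q_jf\|_{\tilde p}
\;\le\; 2^{-N\theta}\|f\|_{0,p,1},
\]
which decays uniformly in $f$ on bounded sets.

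Next, let $(f_n)\subset B^0_{p,1}$ be any sequence with $\|f_n\|_{0,p,1}\le M$. For each fixed $j$, the operator $Q_j\colon B^0_{p,1}\to\mathbb{R}^{\Lambda_j}$ is bounded (indeed $\|Q_jf\|_p\le 2^{jd(1/p-1/2)}\|f\|_{0,p,1}$), and $\mathbb{R}^{\Lambda_j}$ is finite-dimensional. A standard diagonal extraction then yields a subsequence $(f_{n_k})$ such that $(Q_jf_{n_k})_k$ is Cauchy in $\mathbb{R}^{\Lambda_j}$ for every $j\in\mathbb{N}_0$.

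Finally, I would show $(f_{n_k})$ is Cauchy in $B^{-a}_{\tilde p,1}$. Given $\varepsilon>0$, pick $N$ so large that $2^{-N\theta}\cdot 2M<\varepsilon/2$; the tail estimate above then bounds the $j>N$ contribution to $\|f_{n_k}-f_{n_l}\|_{-a,\tilde p,1}$ by $\varepsilon/2$, uniformly in $k,l$. For the remaining finite sum over $j\le N$, each summand tends to $0$ as $k,l\to\infty$ by the diagonal convergence, so for sufficiently large $k,l$ the head contributes less than $\varepsilon/2$. Completeness of $B^{-a}_{\tilde p,1}$ (guaranteed by \eqref{eq:completeness} since $a<\overline{s}$) then produces a limit in $B^{-a}_{\tilde p,1}$, establishing compactness. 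The whole argument is routine once the embedding inequality is in place; the only ``real'' content is the strict inequality $d/p-d/\tilde p<a$, which is exactly what makes the decay factor $2^{-N\theta}$ effective, so there is no serious obstacle.
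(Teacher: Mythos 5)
Your proof is correct and rests on the same core fact as the paper's: the finite-rank truncations to levels $j\le N$ approximate the embedding with operator-norm error $O(2^{-N\theta})$, $\theta=a-(d/p-d/\tilde p)>0$. The paper packages this abstractly (it transfers to the sequence spaces via the isometry $Q$ and cites that a norm-limit of finite-rank operators is compact), whereas you unpack the same estimate into an explicit diagonal extraction and Cauchy argument, likewise invoking \eqref{eq:completeness} to obtain the limit.
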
 
\begin{proof}
By the definition of the space $B^{-a}_{\tilde{p},1}$ the map 
\( Q\colon B^{-a}_{\tilde{p},1}\rightarrow b^{-a}_{\tilde{p},1}\) 
is isometric. The completeness of $B^{-a}_{\tilde{p},1}$ 
(see \eqref{eq:completeness}) 
implies that $Q(B^{-a}_{\tilde{p},1})$ is closed in $b^{-a}_{\tilde{p},1}$. The emdedding $b^0_{p,1}\subset b^{-a}_{\tilde{p},1}$ is compact since it is a limit of finite rank projection operators. Hence the embedding $\iota \colon Q(B^0_{p,1})\subset  Q(B^{-a}_{\tilde{p},1})$ is compact. We conclude the compactness of embedding $B^0_{p,1}\subset B^{-a}_{\tilde{p},1}$ by its factorization as $Q\inv \circ \iota\circ Q$.
\end{proof}

To characterize dual spaces, let $p^\prime\in[1,\infty]$ denote the H\"older conjugate 
of $p\in [1,\infty]$, i.e.\ $\frac{1}{p}+\frac{1}{p^\prime}=1$, and analogously 
for $q$. From the well-known duality $(b^s_{p,q})' = b^{-s}_{p^\prime,q^\prime}$ 
we obtain:
\begin{proposition} \label{dual_spaces_iso}
Let $p,q\in [1,\infty)$ with H\"older conjugates $p^\prime$ and $q^\prime$, 
 respectively,  let $|s|< \overline{s}$ and suppose that \eqref{eq:Q_surjective} 
holds true. Then
\[
(B^s_{p,q})' = B^{-s}_{p^\prime,q^\prime}.
\]
\end{proposition}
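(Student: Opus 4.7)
The plan is to reduce the duality of $B^s_{p,q}$ to the already-known duality of the sequence spaces $b^s_{p,q}$ via the coefficient map $Q$. The key observation is that under assumption \eqref{eq:Q_surjective}, the map $Q$ is in fact an isometric isomorphism, so duality transfers directly.

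First I would verify that $Q\colon B^s_{p,q}\to b^s_{p,q}$ is an isometric isomorphism whenever $|s|<\overline{s}$. Indeed, by \eqref{eq:normdef} we have $\|f\|_{s,p,q} = \|Qf\|_{b^s_{p,q}}$, so $Q$ is isometric (hence injective, since the displayed quantity is a norm and the space is Hausdorff by \eqref{eq:completeness}), and surjective by \eqref{eq:Q_surjective}. Applied to the parameters $(-s,p',q')$, which are also admissible since $|{-s}|<\overline{s}$, the same reasoning yields an isometric isomorphism $Q\colon B^{-s}_{p',q'}\to b^{-s}_{p',q'}$.

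Next I would invoke the standard duality of the sequence spaces: for $p,q\in[1,\infty)$ the pairing
\[
\langle (z_\lambda),(w_\lambda)\rangle_b := \sum_{\lambda\in\Lambda} z_\lambda w_\lambda
\]
identifies $b^{-s}_{p',q'}$ isometrically with $(b^s_{p,q})'$. This is a routine computation: the weighted $\ell^p$-to-$\ell^{p'}$ duality within each level $j$ is handled by H\"older's inequality and the usual extremal choice of dual elements (the reflexive range $p,q\in(1,\infty)$ is classical, and the endpoint cases $p=1$ or $q=1$ are handled by the standard $(\ell^1)'=\ell^\infty$ duality, which still goes through since we are exhibiting $b^{-s}_{p',q'}$ as the dual, not asking for reflexivity).

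Finally, composing the two isometric isomorphisms, any $g\in B^{-s}_{p',q'}$ induces a continuous linear functional on $B^s_{p,q}$ via $f\mapsto \langle Qf,Qg\rangle_b$, and conversely every $\ell\in (B^s_{p,q})'$ pulls back through $Q^{-1}$ to a functional on $b^s_{p,q}$, hence corresponds under the sequence duality to a unique element of $b^{-s}_{p',q'}$, which pulls back through $Q^{-1}$ to an element of $B^{-s}_{p',q'}$; norms are preserved at every step. I do not expect any real obstacle here — the only point requiring a bit of care is checking that the extremal sequences used to compute the $b^{-s}_{p',q'}$-norm of a functional actually lie in the range of $Q$, but this is precisely what the surjectivity statement \eqref{eq:Q_surjective} guarantees.
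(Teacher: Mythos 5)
Your proposal is correct and follows essentially the same route as the paper, which simply derives the statement from the well-known sequence-space duality $(b^s_{p,q})' = b^{-s}_{p',q'}$ by transporting it through the coefficient map $Q$, whose surjectivity is exactly what \eqref{eq:Q_surjective} supplies. Your write-up just makes explicit the isometric-isomorphism bookkeeping that the paper leaves implicit.
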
 
\subsection{An optimality condition}
\begin{lemma}\label{lem:optimality}
Let $D\subset\Xspace$ be a non-empty, open subset of a Banach space $\Xspace$, 
let $\mathcal{F}:D\to \mathbb{R}$ be Gateaux-differentiable, $\mathcal{G}:\Xspace \to\mathbb{R}$ convex and continuous, 
and let $\overline{f}\in D$ be a local minimum of $\mathcal{F}+\mathcal{G}$. Then 
\[
0\in \mathcal{F}'[\overline{f}] + \partial \mathcal{G}(\overline{f}).
\]
\begin{proof}
Recall that the first variation of a functional $\mathcal{H}:D\to\mathbb{R}$ 
at $\overline{f}\in D$ in direction $v\in \Xspace$ is defined by 
\(
\delta \mathcal{H}(\overline{f};v):=\lim_{t\searrow 0}\frac{1}{t} 
\left(\mathcal{H}(\overline{f}+tv)-\mathcal{H}(\overline{f})\right)
\)
if the limit exists. As $\mathcal{F}$ is Gateaux differentiable, we have 
$\delta\mathcal{F}(\overline{f};v)=\mathcal{F}'[f]v$ for all $v$. Moreover, as 
$\mathcal{G}$ is convex and finite, $\delta\mathcal{G}(\overline{f};v)$ 
also exists for all $v$ (see \cite[Prop.~42.5, Thm.~47.C.]{zeidler3}).
It follows that $\delta(\mathcal{F}+\mathcal{G})(\overline{f};v) 
= \delta \mathcal{F}(\overline{f};v)+\delta 
\mathcal{G}(\overline{f};v)$ exists, and since $\overline{f}$ is a local minimum, we 
have $\delta(\mathcal{F}+\mathcal{G})(\overline{f};v)\geq 0$ for all $v\in\Xspace$. 

Suppose now that the assertion is false and 
$-\mathcal{F}'[\overline{f}]\notin \partial \mathcal{G}(\overline{f})$. 
Let us define the functional 
\[
\mathcal{H}(f):= \mathcal{F}(\overline{f})+\mathcal{F}'[\overline{f}](f-\overline{f})
+ \mathcal{G}(f),\qquad f\in\Xspace
\]
which is convex and continuous by the assumptions of the lemma. 
Moreover, our assumption 
$-\mathcal{F}'[\overline{f}]\notin \partial \mathcal{G}(\overline{f})$ 
is equivalent by the sum rule to $0\notin \partial \mathcal{H}(\overline{f})$. 
Due to the equivalences in \cite[Thm.~47.C.]{zeidler3}
\[
\overline{f}\in\argmin_{f\in\Xspace} \mathcal{H}(f) \;\Leftrightarrow\;
\inf_{v\in\Xspace} \delta \mathcal{H}(\overline{f};v)\geq 0 
\;\Leftrightarrow\; 0\in\partial \mathcal{H}(\overline{f})
\]
there exists $v\in\Xspace$ such that $\partial\mathcal{H}(\overline{f};v)< 0$. 
This implies $\delta(\mathcal{F}+\mathcal{G})(\overline{f};v)
= \delta\mathcal{H}(\overline{f};v)< 0$, contradicting the 
assumption that $\overline{f}$ is a local minimum of $\mathcal{F}+\mathcal{G}$. 
\end{proof}
\end{lemma}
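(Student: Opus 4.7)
The plan is to reduce the inclusion to a one-sided directional-derivative inequality at $\overline{f}$ and then invoke the standard characterization of the subdifferential of a convex continuous functional in terms of its directional derivatives.

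First I would set up, for $\mathcal{H}\in\{\mathcal{F},\mathcal{G},\mathcal{F}+\mathcal{G}\}$ and $v\in\Xspace$, the one-sided directional derivative $\delta\mathcal{H}(\overline{f};v):=\lim_{t\searrow 0}t^{-1}(\mathcal{H}(\overline{f}+tv)-\mathcal{H}(\overline{f}))$. Gateaux-differentiability of $\mathcal{F}$ gives $\delta\mathcal{F}(\overline{f};v)=\mathcal{F}'[\overline{f}]v$. Convexity of $\mathcal{G}$ makes the difference quotients $t\mapsto t^{-1}(\mathcal{G}(\overline{f}+tv)-\mathcal{G}(\overline{f}))$ monotone non-decreasing in $t>0$, so the limit exists in $[-\infty,\infty)$; continuity of $\mathcal{G}$ at the interior point $\overline{f}$ rules out the value $-\infty$ (locally $\mathcal{G}$ is Lipschitz). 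Hence $\delta\mathcal{G}(\overline{f};v)\in\mathbb{R}$, and $\delta(\mathcal{F}+\mathcal{G})(\overline{f};v)=\mathcal{F}'[\overline{f}]v+\delta\mathcal{G}(\overline{f};v)$ for every $v$.

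Next, since $\overline{f}$ is a local minimum of $\mathcal{F}+\mathcal{G}$ and $D$ is open, we have $(\mathcal{F}+\mathcal{G})(\overline{f}+tv)\geq(\mathcal{F}+\mathcal{G})(\overline{f})$ for all sufficiently small $t>0$. Dividing by $t$ and letting $t\searrow 0$ yields
\[
-\mathcal{F}'[\overline{f}]v\;\leq\;\delta\mathcal{G}(\overline{f};v)\qquad\text{for all }v\in\Xspace.
\]

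To conclude, I would invoke the standard fact that for a convex, continuous functional $\mathcal{G}$ on a Banach space, a continuous linear functional $\xi\in\Xspace'$ lies in $\partial\mathcal{G}(\overline{f})$ iff $\langle\xi,v\rangle\leq\delta\mathcal{G}(\overline{f};v)$ for all $v\in\Xspace$ (a Hahn--Banach consequence from the convex-analysis references already cited in the paper). Applied to $\xi:=-\mathcal{F}'[\overline{f}]$, the displayed inequality is exactly the defining condition, giving $-\mathcal{F}'[\overline{f}]\in\partial\mathcal{G}(\overline{f})$, equivalently $0\in\mathcal{F}'[\overline{f}]+\partial\mathcal{G}(\overline{f})$. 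The only delicate step is the interchange of limits in the sum $\delta(\mathcal{F}+\mathcal{G})=\delta\mathcal{F}+\delta\mathcal{G}$, which is where finiteness of $\delta\mathcal{G}(\overline{f};\cdot)$ --- and hence continuity of $\mathcal{G}$ together with openness of $D$ around $\overline{f}$ --- is genuinely used; everything else is bookkeeping.
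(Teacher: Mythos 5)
Your proof is correct and follows essentially the same route as the paper: both reduce the claim to the nonnegativity of the one-sided directional derivative of $\mathcal{F}+\mathcal{G}$ at the local minimum and then use the characterization of $\partial\mathcal{G}(\overline{f})$ by directional derivatives. The only cosmetic difference is that you apply that characterization directly to $\xi=-\mathcal{F}'[\overline{f}]$, whereas the paper argues by contradiction via the auxiliary linearized functional $\mathcal{H}(f)=\mathcal{F}(\overline{f})+\mathcal{F}'[\overline{f}](f-\overline{f})+\mathcal{G}(f)$ and the cited equivalences; the underlying mathematics is identical.
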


\paragraph*{acknowledgement:} 
We would like to thank Gerlind Plonka-Hoch for helpful discussions. 
Financial support by DFG through grant RTG 2088 is gratefully acknowledged. 
\bibliographystyle{abbrv}
\bibliography{lit} 	
\end{document}